\documentclass[11pt, twoside, leqno]{article}
\usepackage{graphicx} 
\usepackage{amsfonts}
\usepackage{mathtools}
\usepackage{amssymb}
\usepackage{amscd}
\usepackage{amsthm}
\usepackage[all,cmtip]{xy}
\usepackage{hyperref}
\usepackage{yhmath}
\usepackage{amsmath}
\usepackage{color}
\usepackage{mathrsfs}
\usepackage{txfonts}
\usepackage{amsfonts}

\usepackage{indentfirst}
\usepackage{ulem}
\usepackage[all]{xy}
\usepackage{mathdots}
\usepackage{leftidx}
\usepackage{mathrsfs}
\usepackage{amsmath,amssymb, amsthm}
\usepackage{color}
\usepackage{tikz}
\usepackage{picture}
\usepackage{enumerate}
\usepackage{makecell}
\usepackage{extarrows}
\usepackage{graphicx}
\usepackage{caption}
\usepackage{subcaption}
\usepackage[pagewise]{lineno}


\allowdisplaybreaks

\pagestyle{myheadings}\markboth{\footnotesize\rm\sc
Ruan Yangyang and Zhao Xu-an}
{\footnotesize\rm\sc Cohomology of classifying spaces of rank 3 Kac-Moody groups}

\textwidth=15cm
\textheight=21cm
\oddsidemargin 0.35cm
\evensidemargin 0.35cm

\parindent=13pt

\numberwithin{equation}{section}
\newtheorem{Thm}{Theorem}[section]
\newtheorem{Lem}[Thm]{Lemma}

\newtheorem{Prop}[Thm]{Proposition}
\newtheorem{Ex}[Thm]{Example}
\newtheorem{Rem}[Thm]{Remark}
\newtheorem{Conj}[Thm]{Conjecture}

\newcommand{\RNum}[1]{\uppercase\expandafter{\romannumeral #1\relax}}
\newcommand{\cat}[1]{\mathscr{#1}}
\renewcommand{\appendix}{\par
\setcounter{section}{0}%
\setcounter{subsection}{0}%
\setcounter{subsubsection}{0}%
\gdef\thesection{\@Alph\c@section}%
\gdef\thesubsection{\@Alph\c@section.\@arabic\c@subsection}%
\gdef\theHsection{\@Alph\c@section.}%
\gdef\theHsubsection{\@Alph\c@section.\@arabic\c@subsection}%
\csname appendixmore\endcsname
}
\title{\bf\Large Cohomology of classifying spaces of rank 3 Kac-Moody groups
\footnotetext{\hspace{-0.35cm} 2020 {\it
Mathematics Subject Classification}. Primary 55N45.
\endgraf {\it Key words and phrases.}
Cartan Matrix; Kac-Moody group; Weyl group; Cohomology group; Classifying space; Invariants.
\endgraf This project is supported by the National
Natural Science Foundation of China, 11571038.}}
\author{Yangyang Ruan and Xu-an Zhao}
\date{}
\numberwithin{equation}{section}
\title{Cohomology of classifying spaces of rank 3 Kac-Moody groups}

\begin{document}
\arraycolsep=1pt

\maketitle

\vspace{-0.3cm}

\begin{center}
\begin{minipage}{13cm}
{\small {\bf Abstract}\quad
We represent the rational and mod $p$ cohomology groups of classifying spaces of rank 3 Kac-Moody groups by a direct sum of the invariants of Weyl groups and their quotients. As an application, we conclude that there is a $p$-torsion for each prime $p$ in the integral cohomology groups of classifying spaces of rank 3 Kac-Moody groups. We also determine the ring structure of the rational cohomology with one exception case. }
\end{minipage}
\end{center}

\section{Introduction\label{s1}}

\subsection{Background}
The goal of this paper is to compute the rational and mod $p$ cohomology groups of the classifying spaces of rank 3 Kac-Moody groups. A Kac-Moody group generalizes a Lie group, as its associated Lie algebra can be infinite-dimensional. From the perspective of homotopy theory, the natural object associated with a Kac-Moody group is its classifying space. Nitu Kitchloo was the first to study the classifying spaces of Kac-Moody groups in his thesis \cite{Kit98}. He discovered a homotopy equivalence between the classifying space of a Kac-Moody group and the homotopy colimit of the classifying spaces of its parabolic subgroups of finite type (see \cite[Theorem 4.2.4]{Kit98} or Theorem \ref{s2t2} for details). This result allows the classifying space of a Kac-Moody group to be constructed homotopically from the classifying spaces of its parabolic subgroups, providing a framework for computing cohomology.

Several subsequent works \cite{Kit14, Kit17, BK02, ABKS05} have built on this foundation. Notably, Aguadé, Broto, Kitchloo, and Saunell \cite{ABKS05} computed the mod $p$ cohomology algebra of the classifying spaces of central quotients of rank 2 Kac-Moody groups. Foley has also made significant contributions to the study of these classifying spaces \cite{Fol12, Fol15, Fol22}.

However, little is known about the cohomology of classifying spaces of Kac-Moody groups of indefinite type. Beyond the rank 2 case, there are virtually no examples of computed cohomology for their classifying spaces. Motivated by this gap, our work extends the study to the rank 3 case. Our results on the structure of the classifying spaces of Kac-Moogy groups show a striking difference with those for Lie groups. For example, their integral cohomology groups exhibit
$p$-torsion for every prime $p$, in stark contrast to classical Lie groups, whose integral cohomology groups contain torsion only at $p=2,3,5$.

\subsection{Main Result}
In this paper, we focus on simply connected Kac-Moody groups. Using Kitchloo's result \cite[Theorem 4.2.4]{Kit98}, we classify rank 3 Kac-Moody groups of infinite type in \ref{s3ht}, dividing them into four distinct classes. This classification corresponds directly to the classification of rank 3 Cartan matrices of infinite type, due to the one-to-one correspondence between Cartan matrices $A$ and Kac-Moody groups $G(A)$.

We use the Mayer-Vietoris sequence to compute the rational cohomology groups $H^*(BG(A);\,\mathbb{Q})$ of the classifying space $BG(A)$ for each class in Proposition \ref{s3ht}. The input involves the rational cohomology of the classifying spaces of parabolic subgroups of finite type. By Borel's theorem \cite{Bor53}(see also Theorem \ref{s2t1}), their rational cohomology is isomorphic to the invariants of their Weyl groups on $H^*(BT;\,\mathbb{Q})$. For a Kac-Moody group $G(A)$, we denote its parabolic subgroup indexed by $J$ as $G_{J}(A)$. Notably, $\emptyset$ refers to the maximal torus $G_{\emptyset}(A)=T$. For simplicity, in the rank 3 case, we omit brackets and commas when referring to indices, e.g., $G_{13}(A)$ represents $G_{\{1,~3\}}(A)$, corresponding to the simple roots $\{\alpha_{1},~\alpha_{3}\}$. Similarly, $W_{J}(A)$  denotes the subgroup of $W(A)$ generated by elements indexed by $J$. By applying Borel's result \cite{Bor53}, we obtain:
 $$H^*(BG_{J}(A);\,\mathbb{Q})\cong H^{*}(BT;\,\mathbb{Q})^{W_{J}(A)}.$$

Let $P=H^{*}(BT;\,\mathbb{Q})$ and $P_J=P^{W_{J}(A)}$. Clearly, $P_{\emptyset}=P$, and for $H\subseteq J$, $P_J\subseteq P_H$.
\begin{Thm}[Theorem \ref{s3t1}]\label{RCHG}
The rational cohomology groups $H^*(BG(A);\,\mathbb{Q})$ corresponding to the four classes of rank 3 Kac-Moody groups of infinite type in Proposition \ref{s3ht} are represented as follows:
\begin{enumerate}
\item[{\rm (i)}] $\Sigma(P-(P_{1}+P_{2}))\bigoplus \Sigma(P-(P_{12}+P_{3}))\bigoplus P_{123};$
\item[{\rm (ii)}] $\Sigma(P-(P_{12}+P_{3}))\bigoplus P_{123};$
\item[{\rm (iii)}] $\Sigma(P_{1}-(P_{12}+P_{13}))\bigoplus P_{123};$
\item[{\rm (iv)}] $\Sigma^{2}(P-(P_{1}+P_{2}+P_{3}))\bigoplus \Sigma(P_{1}\cap (P_{2}+P_{3})-(P_{12}+P_{13}))\bigoplus P_{123},$
\end{enumerate}
\end{Thm}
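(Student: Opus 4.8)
The plan is to feed Kitchloo's homotopy decomposition (Theorem \ref{s2t2}) into the Mayer--Vietoris sequence, using Borel's theorem (Theorem \ref{s2t1}) for the inputs. Write $\mathcal{S}(A)$ for the poset of subsets $J\subseteq\{1,2,3\}$ that are of finite type. Since subsets of size $\le 1$ are always of finite type and $\{1,2,3\}$ never is (as $A$ is of infinite type), $\mathcal{S}(A)$ depends only on which of $\{1,2\},\{1,3\},\{2,3\}$ are of finite type, and the four classes of Proposition \ref{s3ht} correspond, after relabelling the simple roots, to $0$, $1$, $2$, $3$ of these being of finite type, respectively. By Theorem \ref{s2t2}, $BG(A)\simeq\operatorname{hocolim}_{J\in\mathcal{S}(A)}BG_J(A)$, and by Theorem \ref{s2t1} the restriction maps identify this diagram rationally with $J\mapsto P_J$ and the structure maps with the inclusions $P_J\hookrightarrow P_H$ for $H\subseteq J$; in particular every structure map is injective. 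Throughout I will use $P_J\cap P_K=P^{W_J(A)}\cap P^{W_K(A)}=P^{\langle W_J(A),W_K(A)\rangle}=P_{J\cup K}$.

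The first step is to reduce the homotopy colimit to an iterated homotopy pushout of classifying spaces of \emph{finite}-type parabolics, via two standard facts about homotopy colimits over posets: (a) if a finite poset has a terminal object $\tau$, the homotopy colimit of any diagram over it is naturally equivalent to its value at $\tau$ (the inclusion of $\tau$ is homotopy cofinal); (b) if a poset is a union $\mathcal{S}_1\cup\mathcal{S}_2$ of full subposets with intersection $\mathcal{S}_0$ such that no element of $\mathcal{S}_1\setminus\mathcal{S}_0$ is comparable with an element of $\mathcal{S}_2\setminus\mathcal{S}_0$, then the homotopy colimit over $\mathcal{S}_1\cup\mathcal{S}_2$ is the homotopy pushout of those over $\mathcal{S}_1$ and $\mathcal{S}_2$ along the one over $\mathcal{S}_0$. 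Applying these to the four posets yields: in case (i) ($\mathcal{S}(A)=\{\emptyset,1,2,3\}$) $BG(A)\simeq(BG_1(A)\cup_{BT}BG_2(A))\cup_{BT}BG_3(A)$; in case (ii) ($\mathcal{S}(A)=\{\emptyset,1,2,3,12\}$) $BG(A)\simeq BG_{12}(A)\cup_{BT}BG_3(A)$; in case (iii) ($\mathcal{S}(A)=\{\emptyset,1,2,3,12,13\}$) $BG(A)\simeq BG_{12}(A)\cup_{BG_1(A)}BG_{13}(A)$; and in case (iv) ($\mathcal{S}(A)=\{\emptyset,1,2,3,12,13,23\}$) $BG(A)\simeq\bigl(BG_{12}(A)\cup_{BG_1(A)}BG_{13}(A)\bigr)\cup_{BG_2(A)\cup_{BT}BG_3(A)}BG_{23}(A)$.

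The second step is the Mayer--Vietoris computation. For a homotopy pushout $Y\cup_A Z$ the sequence reads $\cdots\to H^{n-1}(A)\xrightarrow{\delta}H^n(Y\cup_A Z)\to H^n(Y)\oplus H^n(Z)\xrightarrow{\phi}H^n(A)\to\cdots$ with $\phi(y,z)=y|_A-z|_A$; since a short exact sequence of graded vector spaces splits, $H^*(Y\cup_A Z)\cong\ker\phi\oplus\Sigma\operatorname{coker}\phi$. For case (ii), $\phi\colon P_{12}\oplus P_3\to P$ has $\ker\phi\cong P_{123}$ and $\operatorname{coker}\phi\cong P-(P_{12}+P_3)$, giving (ii). For case (iii), $\phi\colon P_{12}\oplus P_{13}\to P_1$ (both summands lie in $P_1$) has $\ker\phi\cong P_{12}\cap P_{13}=P_{123}$ and $\operatorname{coker}\phi\cong P_1-(P_{12}+P_{13})$, giving (iii). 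For case (i), the first pushout gives $H^*(BG_1(A)\cup_{BT}BG_2(A))\cong P_{12}\oplus\Sigma(P-(P_1+P_2))$ from $\phi\colon P_1\oplus P_2\to P$; the second Mayer--Vietoris then involves $\bigl(P_{12}\oplus\Sigma(P-(P_1+P_2))\bigr)\oplus P_3\to P$, but the $\Sigma$-summand restricts trivially to $BT$ (it lies in the image of $\delta$), so this map reduces to $P_{12}\oplus P_3\to P$ as in case (ii), and (i) follows.

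Case (iv) is where the real work lies, and I expect it to be the main obstacle. Here I first obtain $H^*(BG_{12}(A)\cup_{BG_1(A)}BG_{13}(A))\cong P_{123}\oplus\Sigma(P_1-(P_{12}+P_{13}))$ and $H^*(BG_2(A)\cup_{BT}BG_3(A))\cong P_{23}\oplus\Sigma(P-(P_2+P_3))$ from the second step, and then I must pin down the map $\psi$ in the outer Mayer--Vietoris sequence, namely the map on cohomology induced by $BG_2(A)\cup_{BT}BG_3(A)\to BG_{12}(A)\cup_{BG_1(A)}BG_{13}(A)$. Naturality of the connecting homomorphism $\delta$ for the evident map of pushout squares shows that $\psi$ carries the $P_{123}$-summand into $P_{23}$ via the inclusion $P_{123}\hookrightarrow P_{23}$, carries $H^*(BG_{23}(A))=P_{23}$ identically onto the $P_{23}$-summand, and carries $\Sigma(P_1-(P_{12}+P_{13}))$ into $\Sigma(P-(P_2+P_3))$ via the inclusion $P_1\hookrightarrow P$ (well defined since $P_{12}+P_{13}\subseteq P_2+P_3$). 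A direct computation then gives $\operatorname{coker}\psi\cong\Sigma\bigl(P-(P_1+P_2+P_3)\bigr)$ and, using $P_{12}+P_{13}\subseteq P_1\cap(P_2+P_3)$ and $P_{12}\cap P_{13}=P_{123}$, $\ker\psi\cong P_{123}\oplus\Sigma\bigl(P_1\cap(P_2+P_3)-(P_{12}+P_{13})\bigr)$; feeding these into $H^*(BG(A))\cong\ker\psi\oplus\Sigma\operatorname{coker}\psi$ produces exactly (iv). The delicate points throughout are justifying the homotopy-colimit reductions of the first step, keeping track in the second step of which summand of a pushout's cohomology restricts trivially along a torus so that the next Mayer--Vietoris map simplifies, and in case (iv) both the precise identification of $\psi$ and the ensuing bookkeeping of sums and intersections of the invariant subspaces $P_J$.
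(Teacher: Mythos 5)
Your proposal is correct and follows essentially the same route as the paper: reduce Kitchloo's homotopy colimit to the union of the classifying spaces of the maximal finite-type parabolics (the paper's Formula (2.1) and Proposition \ref{s3ht}), feed Borel's theorem into iterated Mayer--Vietoris sequences split as $\ker\oplus\Sigma\,\mathrm{coker}$, and in case (iv) identify the outer map on the $P_{123}\oplus P_{23}$ and suspended summands by naturality of the Mayer--Vietoris/connecting maps, exactly as the paper does with its $j_6^*$, $j_7^*$. The only differences are cosmetic: you phrase the first reduction via cofinality and poset-decomposition facts rather than the paper's cofibrant replacement $X_J$, and you write out cases (i)--(iii), which the paper leaves as ``similar.''
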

where the minus symbol `$-$' denotes the quotient of invariants, and
$\Sigma$ represents a shift in cohomology degree by one.

The computations of rational and mod $p$ cohomology groups are largely similar, except for the consideration of $p$-torsions in the integral cohomology groups of parabolic subgroups. When these groups are $p$-torsion free, the mod $p$ cohomology has the same form as the rational case, replacing $\mathbb{Q}$ with  $\mathbb{F}_{p}$. Otherwise, the computations become more complex.

All rank 1 Kac-Moody groups are of finite type, and the integral cohomology groups of their classifying spaces are torsion-free. For rank 2 simply connected Kac-Moody groups of finite type, their integral cohomology groups are $p$-torsion free for all primes except
$p=2$, as shown in \cite[Theorem 5.11]{MT91}. This exception arises because the integral homology group of the Lie group $G_2$ contains 2-torsion, and $G_2$ can appear as a parabolic subgroup of rank 3 Kac-Moody groups. Consequently, we divide the mod $p$ cohomology computations into two cases: $p>2$ and $p=2$.

For mod $p$ ($p>2$) cohomology, they have the same form Theorem \ref{s3t2} as the rational case;
For mod 2 cohomology, we refine the classification into ten classes based on the appearance of $G_2$ as a rank 2 finite parabolic subgroup. Their cohomology groups are presented in Theorem \ref{s4t1}.

Although the cohomology groups of the classifying spaces of rank 3 Kac-Moody groups of infinite type, as presented in Theorems \ref{RCHG}, \ref{s3t2}, and \ref{s4t1}, do not follow the standard form of abelian groups, they still provide significant insights into the integral cohomology groups of these classifying spaces. A direct corollary of these results generalizes Borel's Theorem \cite{Bor53} from Lie groups to rank 3 Kac-Moody groups. Specifically, the image of the cohomology homomorphism
$\rho^{*}_{\mathbb{F}}:H^{*}(BG(A);\,\mathbb{F})\rightarrow H^{*}(BT;\,\mathbb{F})$ , induced by the map $\rho: BT \rightarrow BK(A)$, is given by $H^{*}(BT;\,\mathbb{F})^{W(A)}$ when $\mathbb{F}$ is $\mathbb{Q}$ or $\mathbb{F}_{p}$. By combining this with a computation of invariants, we arrive at the following conclusion:
\begin{Thm}[Theorem \ref{s6t1}]
There is a $p$-torsion  for each prime $p$ in the integral cohomology groups of classifying spaces of rank 3 Kac-Moody groups of infinite type.
\end{Thm}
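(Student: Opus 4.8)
The plan is to combine the two descriptions of $\operatorname{im}(\rho^{*}_{\mathbb F})$ supplied by the generalized Borel statement above (one for $\mathbb F=\mathbb Q$, one for $\mathbb F=\mathbb F_p$) with a coarse comparison of the rings of $W(A)$-invariants over $\mathbb Q$ and over $\mathbb F_p$. First comes a standard reduction. Assume, toward a contradiction, that $H^{*}(BG(A);\mathbb Z)$ has no $p$-torsion in any degree. Since $BG(A)$ is a finite homotopy colimit of classifying spaces of finite-type parabolics, $H^{*}(BG(A);\mathbb Z)$ is finitely generated in each degree, so the universal coefficient theorem gives $H^{*}(BG(A);\mathbb F_p)\cong H^{*}(BG(A);\mathbb Z)\otimes\mathbb F_p$, and, because $H^{*}(BT;\mathbb Z)$ is torsion-free, it identifies $\rho^{*}_{\mathbb F_p}$ with $\rho^{*}_{\mathbb Z}\otimes\mathbb F_p$ followed by the isomorphism $H^{*}(BT;\mathbb Z)\otimes\mathbb F_p\cong H^{*}(BT;\mathbb F_p)$. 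An integral class pulled back along $\rho$ becomes rationally $W(A)$-invariant by the rational case of the generalized Borel statement, hence already lies in $H^{*}(BT;\mathbb Z)^{W(A)}$; therefore $H^{*}(BT;\mathbb F_p)^{W(A)}=\operatorname{im}(\rho^{*}_{\mathbb F_p})$ would be contained in the image of the reduction map
\[
\mu_{p}\colon\ H^{*}(BT;\mathbb Z)^{W(A)}\otimes\mathbb F_p\ \longrightarrow\ H^{*}(BT;\mathbb F_p)^{W(A)},
\]
i.e.\ $\mu_{p}$ would be surjective. So it suffices to prove that $\mu_{p}$ is \emph{not} surjective for every prime $p$; equivalently, since $H^{2n}(BT;\mathbb Z)^{W(A)}$ is free of rank $\dim_{\mathbb Q}H^{2n}(BT;\mathbb Q)^{W(A)}$, it suffices to find, for each $p$, a degree $2n$ with $\dim_{\mathbb F_p}H^{2n}(BT;\mathbb F_p)^{W(A)}>\dim_{\mathbb Q}H^{2n}(BT;\mathbb Q)^{W(A)}$.

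This is exactly where a computation of invariants is needed, and the mechanism is a growth-rate dichotomy. On the rational side, $P_{123}=H^{*}(BT;\mathbb Q)^{W(A)}$ has been determined in the proof of Theorem \ref{RCHG}; since the Cartan matrix is of infinite type, $W(A)$ is an infinite group, so a generic $W(A)$-orbit on $H^{2}(BT;\mathbb C)$ is infinite, its Zariski closure is positive-dimensional, and the invariants are constant on it, forcing the transcendence degree of $P_{123}$ to be at most $\dim T-1$; hence the graded dimensions of $P_{123}$ grow at most like a polynomial of degree $\dim T-2$ in the degree. On the mod-$p$ side, $W(A)$ acts on $H^{*}(BT;\mathbb F_p)\cong\mathbb F_p[x_1,\dots,x_{\dim T}]$ through its reduction $\overline{W(A)}\le \mathrm{GL}_{\dim T}(\mathbb F_p)$, so by Dickson's theorem $H^{*}(BT;\mathbb F_p)^{W(A)}$ contains the polynomial subalgebra $H^{*}(BT;\mathbb F_p)^{\mathrm{GL}_{\dim T}(\mathbb F_p)}=\mathbb F_p[c_0,\dots,c_{\dim T-1}]$ with $\deg c_i=2(p^{\dim T}-p^{i})$, a ring of Krull dimension $\dim T$ whose graded dimensions grow like a polynomial of degree $\dim T-1$. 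As $\dim T\ge 3$, we have $\dim T-1>\dim T-2$, so for suitable $n$ (indeed for all large $n$ in a fixed arithmetic progression) the mod-$p$ invariants strictly outgrow the rational ones, $\mu_p$ is not surjective, and therefore $H^{*}(BG(A);\mathbb Z)$ has $p$-torsion.

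This argument is uniform in $p$ and does not really need the four-class subdivision of Proposition \ref{s3ht}; to make the paper self-contained I would in addition pin down, class by class, an explicit non-liftable mod-$p$ invariant and the precise cohomological degree in which the $\mathbb F_p$-Betti number of $BG(A)$ first exceeds its $\mathbb Q$-Betti number — reading this off directly from Theorems \ref{RCHG} and \ref{s3t2}, and from the extra $G_2$-related classes of Theorem \ref{s4t1} when $p=2$ (and from the $A_2$ and $G_2$ parabolics when $p=3$). The main obstacle is the invariant-theoretic bookkeeping of the middle paragraph: justifying the growth dichotomy rigorously (in particular that $W(A)$ acts with positive-dimensional generic orbit closures, so that transcendence degree drops, while the mod-$p$ invariants still contain a full-dimensional Dickson subalgebra) and confirming that the value of $\dim T$ entering the estimate is the one correctly read off from each Cartan matrix — equal to $3$ in the nondegenerate case and $2\cdot 3-\operatorname{rank}(A)$ in general.
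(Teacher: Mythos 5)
Your reduction step is exactly the paper's: assuming no $p$-torsion, you identify $\rho^{*}_{\mathbb F_p}$ with the mod-$p$ reduction of $\rho^{*}_{\mathbb Z}$, use the generalized Borel statement (the paper's Lemma \ref{s6l1}, a by-product of Theorems \ref{s3t1}, \ref{s3t2}, \ref{s4t1}) on both sides, and conclude the degree-wise inequality $\dim_{\mathbb F_p}H^{2n}(BT;\mathbb F_p)^{W(A)}\le \dim_{\mathbb Q}H^{2n}(BT;\mathbb Q)^{W(A)}$ — this is precisely Lemma \ref{s6l2} and inequality \ref{s6f2}, repackaged via your map $\mu_p$. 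Where you genuinely diverge is the contradiction step. The paper makes the rational side explicit by quoting Theorem \ref{s5t1} (Zhao--Jin): $H^{*}(BT;\mathbb Q)^{W(A)}$ is $\mathbb Q$ or $\mathbb Q[\psi]$, and then compares Poincar\'e series against the Dickson subalgebra in the single degree $2p^{2}(p^{2}-1)$, where the Dickson series has coefficient at least $2$ (from $d_{2(p^{3}-p)}^{\,p}$ and $d_{2(p^{3}-p^{2})}^{\,p+1}$) while $\mathbb Q[\psi]$ contributes at most $1$. You instead bound the rational side qualitatively: infinite image of $W(A)$ in $GL_3(\mathbb Q)$ forces transcendence degree $\le 2$, hence at most linear growth of graded pieces, against the quadratic growth of the rank-$3$ Dickson algebra. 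Your route buys independence from the Zhao--Jin classification (and so sidesteps the question of its applicability beyond the indefinite case), at the cost of proving two facts the paper gets for free.

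Those two facts are real obligations, not bookkeeping, and you only flag them. (i) You must show the image of $W(A)$ in $GL(H^{2}(BT;\mathbb Q))\cong GL_3(\mathbb Q)$ is infinite for every rank-$3$ infinite-type $A$: for invertible $A$ this follows because the span of the $\alpha_j$ is a subrepresentation on which $W(A)$ acts by the (faithful, by Tits) geometric representation, but in the affine case that span is $2$-dimensional and the action on it factors through the finite Weyl group, so you need the additional check that the translation subgroup acts nontrivially on the $3$-dimensional weight space. (ii) The bound ``transcendence degree $\le 2$ implies $\dim (P_{123})_{2n}=O(n)$'' is not the standard Hilbert-polynomial fact, because you may not assume a priori that $P_{123}$ is finitely generated; you need an argument valid for arbitrary graded subalgebras of $\mathbb Q[\omega_1,\omega_2,\omega_3]$ (e.g.\ a degree estimate for the curve parametrized by a basis of $(P_{123})_{2n}$, or GK-dimension of graded domains). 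Both are true and provable, so your approach works once these are supplied. Finally, note that in this paper $T$ has rank $3$ in all four classes ($H^{*}(BT)=\mathbb Q[\omega_1,\omega_2,\omega_3]$), so your worry about whether $\dim T$ should be $3$ or $2\cdot 3-\operatorname{rank}(A)$ does not arise; in any case the inequality $\dim T-1>\dim T-2$ driving your dichotomy is insensitive to it.
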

We have demonstrated that $H^*(BG(A);\,\mathbb{Q})$ can be expressed as a direct sum of invariants of Weyl groups and their quotients, which provides the group structure. Additionally, we establish a vanishing result (Theorem \ref{s8t1}) for the first direct summand of the rational cohomology in case {\rm (i)} of Theorem \ref{RCHG}. This result enables us to determine the ring structure of $H^*(BG(A);\,\mathbb{Q})$ for the first three cases in Theorem \ref{RCHG}; further details can be found in Theorem \ref{RCR}.

Our paper is \textbf{organized} as follows. In Section 2, we recall key results about Kac-Moody groups and introduce the notations and tools used in our computations. In Section 3, we compute the rational and mod $p$ ($p>2$) cohomology groups of their classifying spaces. Section 4 presents a refined classification of rank 3 Kac-Moody groups of infinite type and computes the mod 2 cohomology groups of their classifying spaces. In Section 5, to highlight the differences in
$H^*(BG(A))$ under various coefficients, we provide an example of a specific Kac-Moody group and compute its rational, mod 3, and mod 2 Poincar é series. In Section 6, as an application of our computations, we prove that there is a $p$-torsion for each prime $p$ in the integral cohomology groups of the classifying spaces of rank 3 Kac-Moody groups of infinite type. Finally, in Section 7, we determine the ring structure of the rational cohomology groups of the classifying spaces of rank 3 Kac-Moody groups of infinite type for all cases except the last one.

\section{Preliminaries \label{s2}}
In this section, we recall some important results about Kac-Moody groups and introduce some notations for the computation of cohomology groups of their classifying spaces.

\subsection{Kac-Moody groups \label{s2.1} }
A good reference for Kac-Moody groups is \cite{Kac90}.

Let $A=(a_{ij})_{n\times n}$ be an $n\times n$ integer matrix; $A$ is called a (\textit{generalised}) \textit{Cartan matrix} if it satisfies the following conditions:\\
(1) For each $i$, $a_{ii}=2$;\\
(2) For $i\neq j$, $a_{ij}\leq 0$;\\
(3) For $i\neq j$, if $a_{ij}=0$, then $a_{ji}=0$.\\
For each Cartan matrix $A$, there are a \textit{Kac-Moody algebra} $g(A)$ and a simply connected \textit{Kac-Moody group} $G(A)$. And $n$ is called the \textit{rank} of $g(A)$ or $G(A)$.

A Cartan matrix $A$ is \textit{symmetrizable} if there exists a real invertible diagonal matrix $D$ and a symmetric matrix $B$ such that $A=DB$. Kac-Moody group $G(A)$ and Kac-Moody algebra $g(A)$ are \textit{symmetrizable} if the associated Cartan matrix $A$ is symmetrizable. A Cartan matrix $A$ is called \textit{indecomposable} if $A$ cannot be decomposed into a direct sum $A_{1}\oplus A_{2}$ of two Cartan matrices $A_{1}$ and $A_{2}$. Kac-Moody group $G(A)$ is \textit{indecomposable} if the associated Cartan matrix $A$ is indecomposable.

The indecomposable Cartan matrices or their associated Kac-Moody algebras or their associated Kac-Moody groups are divided into three types:\\
(1) Finite type, when $A$ is positive definite;\\
(2) Affine type, when $A$ is positive semidefinite and ${\rm rank}(A)=n-1$;\\
(3) Indefinite type, otherwise.

An indecomposable Cartan matrix $A$ is called \textit{infinite type} if it is of affine type or indefinite type. If $A$ is of finite type, then its associated Kac-Moody group $G(A)$ is the Lie group. To save space, instead of stating Cartan matrix or Kac-Moody group of finite (infinite) type, we will simply refer to them as the finite (infinite) Cartan matrix or Kac-Moody group, respectively. Obviously, a rank 2 Kac-Moody group $G(A)$ is infinite type if and only if $a_{12}a_{21}\geq 4$.

The Kac-Moody group $G(A)$ supports a canonical antilinear involution $\omega$, and one defines the unitary form $K(A)$ as the fixed group $G(A)^{\omega}$. As the inclusion map $K(A)\hookrightarrow G(A)$ is a homotopy equivalence, we work with $K(A)$ instead of $G(A)$ for convenience. Let $I$ be the set $\{1,~2,~\cdots,~n\}$. A subset
$J\subseteq I$ gives rise to a parabolic subalgebra $g_{J}(A)\subseteq g(A)$.
One may exponentiate these subalgebras to parabolic subgroups $G_{J}(A)\subseteq G(A)$. Let the unitary
Levi factors $K_{J}(A)$ be the groups $K(A)\cap G_{J}(A)$. Let $N(T)$ be the normalizer of $T$ in $K(A)$.
The \textit{Weyl group} $W(A)$ of $K(A)$ is the quotient group $N(T)/T$.
$W(A)$ is a Coxeter
group generated by Weyl reflections $\sigma_{i}$, $i\in I$. It has a Coxeter presentation given as follows:
$$W(A)=\left\langle~\sigma_{i} \mid ~\sigma^{2}_{i}=1,~i\in I;~(\sigma_{k}\sigma_{j})^{m_{kj}}=1,~ 1\leq k < j \leq n ~\right\rangle,$$
where $m_{kj}$ depends on the Cartan matrix $A=(a_{ij})_{n\times n}$, and
\begin{center}
\begin{tabular}{|c|c|c|c|c|c|}
\hline $a_{kj}a_{jk}$&0&1&2&3&$\geq$ 4 \\
\hline $m_{kj}$ &2&3&4&6&$\infty$ \\
\hline
\end{tabular}.
\end{center}For $J\subseteq I$, let $W_{J}(A)$ be the subgroup of $W(A)$ generated by the corresponding reflections $\sigma_{j}$, $j\in J$. The group $W_{J}(A)$ is also a Coxeter group that can be identified with the Weyl group of $K_{J}(A)$.

For the action of the Weyl group, a good reference is \cite{MT91}. The action of $W(A)$ on $T$ is defined by
$W(A)\times T\rightarrow T, ([s], t)\mapsto s^{-1}ts$, for $[s]\in W(A), t\in T$.
The induced action on the classifying space $BT=EK(A)/T$ of $T$ is $W(A)\times BT\rightarrow BT, ([s], bT)\mapsto bsT$, for $[s]\in W(A), bT \in BT$. This action on $BT$ induces an action on $H^{*}(BT;R)$ with a specific coefficient ring $R$. Then the cohomology ring of $H^{*}(BT; R)$
can be represented by the polynomial ring $R[\omega_{1},~\cdots,~\omega_{n}]$ with  the degree $\deg\omega_{i}=2$ of the fundamental dominant weight for any $i\in I$. Let $I_{n}$ be the $n\times n$ identity matrix, $A_{j}$ denote the matrix in which we use zeros to replace all elements of Cartan matrix $A$ except the $j$-th column, and $\alpha_{j}=\sum\limits^{n}_{i=1}a_{ij}\omega_{i}$ for any $j\in I$. Then, the induced action by generators $\{~\sigma_{j}~|~j\in I~\}$ of $W(A)$ can be written as follows:
\begin{align*}
W(A)\times H^{*}(BT;R)&\longrightarrow H^{*}(BT;R)\\
\sigma_{j}(~\omega_{1},~\cdots,~\omega_{n}~)&=(~\omega_{1},~\cdots,~\omega_{n}~)(~I_{n}-A_{j}~)\\
&=(~\omega_{1},~\cdots,~\omega_{j-1},~\omega_{j}-\alpha_{j},~\omega_{j+1},~\cdots,~\omega_{n}~).
\end{align*}

\subsection{A fundamental result about infinite Kac-Moody groups}
For a rank $n$ Cartan matrix $A$, let $\cat P(A)$ denote the category with objects that are proper subsets $J$ of $I$ with $K_J(A)$ of finite type, and morphisms given by inclusions of these subsets.
Now we recall that a fundamental result about infinite Kac-Moody groups is due to Kitchloo.
\begin{Thm}(\cite[Theorem 4.2.4]{Kit98}) \label{s2t2}
Let $K(A)$ be an infinite Kac-Moody group, then there is a homotopy equivalence:
$$BK(A)\simeq \mathop{\rm hocolim}_{J \in \cat P(A) } BK_{J}(A).$$
\end{Thm}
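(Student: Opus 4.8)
The plan is to identify $BK(A)$ with a Borel construction over a contractible $K(A)$-space assembled from the finite-type parabolics, and then to reorganize that Borel construction as a homotopy colimit over $\cat P(A)$. First I would form the $K(A)$-space
$$\mathcal{X}(A)\;=\;\mathop{\rm hocolim}_{J \in \cat P(A)} K(A)/K_{J}(A),$$
where for $J\subseteq J'$ the structure map is the projection $K(A)/K_{J}(A)\to K(A)/K_{J'}(A)$; equivalently, $\mathcal{X}(A)$ is the geometric realization of the poset of cosets $\{\,gK_{J}(A):g\in K(A),\ J\in\cat P(A)\,\}$ ordered by inclusion, i.e.\ the Kac-Moody analogue of the Davis complex of the Coxeter system $(W(A),\{\sigma_{i}\})$. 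Since $\emptyset\in\cat P(A)$ (with $K_{\emptyset}(A)=T$), the indexing category is a nonempty finite poset and $\mathcal{X}(A)$ is a well-behaved $K(A)$-CW complex; the stabilizer of a cell is a conjugate of some $K_{J}(A)$, $J\in\cat P(A)$, because in a nested chain of cosets the intersection of the corresponding parabolics is the smallest one and $K_{J}(A)=K(A)\cap G_{J}(A)$.

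The crux --- and the step I expect to be the main obstacle --- is to show that $\mathcal{X}(A)$ is contractible. This is exactly where the structure theory of Kac-Moody groups is needed: using the refined Bruhat/Birkhoff decompositions and the twin $BN$-pair (twin building) structure of $G(A)$, restricted to the unitary form $K(A)$, one stratifies $\mathcal{X}(A)$ by pieces indexed by $W(A)$ and reduces its contractibility to that of the Davis complex of $(W(A),\{\sigma_{i}\})$ --- which holds because that complex carries a complete $\mathrm{CAT}(0)$ (Moussong) metric. The finite-type hypothesis on the $J\in\cat P(A)$ is what makes the local pieces $K_{J}(A)$ compact Lie groups and pins the combinatorics down to spherical subsets. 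This contractibility statement is the real content of \cite[Theorem 4.2.4]{Kit98}.

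Granting contractibility, everything else is formal. The collapse $\mathcal{X}(A)\to\mathrm{pt}$ is a $K(A)$-equivariant weak equivalence, and $EK(A)\times_{K(A)}(-)$ preserves such equivalences and also preserves homotopy colimits of $K(A)$-spaces, so
$$BK(A)=EK(A)\times_{K(A)}\mathrm{pt}\ \simeq\ EK(A)\times_{K(A)}\mathcal{X}(A),$$
and
\begin{align*}
EK(A)\times_{K(A)}\mathcal{X}(A)
&\ \simeq\ \mathop{\rm hocolim}_{J \in \cat P(A)}\bigl(EK(A)\times_{K(A)}K(A)/K_{J}(A)\bigr)\\
&\ =\ \mathop{\rm hocolim}_{J \in \cat P(A)} EK(A)/K_{J}(A).
\end{align*}
Finally, restricting the free contractible $K(A)$-action on $EK(A)$ along $K_{J}(A)\hookrightarrow K(A)$ gives a free contractible $K_{J}(A)$-space, i.e.\ a model of $EK_{J}(A)$, so $EK(A)/K_{J}(A)\simeq BK_{J}(A)$ naturally in $J$. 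Chaining these equivalences yields $BK(A)\simeq\mathop{\rm hocolim}_{J \in \cat P(A)} BK_{J}(A)$.

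A few point-set matters deserve care but are routine: equipping $K(A)$ and the $K_{J}(A)$ with compatible topologies (the colimit topology from the finite-type pieces) so that the $K(A)$-CW structure on $\mathcal{X}(A)$ and the Borel constructions behave well; justifying that $EK(A)\times_{K(A)}(-)$ commutes with the finite homotopy colimit and preserves equivalences; and verifying $EK(A)/K_{J}(A)\simeq BK_{J}(A)$, which holds since $EK(A)$ restricts to a numerable free contractible $K_{J}(A)$-space. None of these obstructs the argument --- the entire weight of the theorem rests on the contractibility of $\mathcal{X}(A)$.
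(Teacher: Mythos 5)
The paper itself offers no proof of this statement: it is quoted verbatim from \cite[Theorem 4.2.4]{Kit98}, so the only meaningful comparison is with Kitchloo's argument. Your outer scaffolding is exactly that argument: form the topological building $\mathcal{X}(A)=\mathop{\rm hocolim}_{J\in\cat P(A)}K(A)/K_{J}(A)$, use that the Borel construction $EK(A)\times_{K(A)}(-)$ preserves equivariant maps that are weak equivalences and commutes with the homotopy colimit, and identify $EK(A)/K_{J}(A)\simeq BK_{J}(A)$. Those formal steps are fine and are the same in Kitchloo's proof.

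The gap is the one step you yourself call the crux: the contractibility of $\mathcal{X}(A)$. You dispose of it by asserting that a stratification by $W(A)$ ``reduces'' it to the contractibility of the Davis complex via the Moussong $\mathrm{CAT}(0)$ metric, but that reduction is not supplied and, as stated, cannot carry the weight. The $\mathrm{CAT}(0)$/Davis results concern the \emph{discrete} building -- the realization of the poset of cosets $wW_{J}(A)$ (or $gP_{J}(A)$ with the discrete topology) -- whereas $\mathcal{X}(A)$ is glued from the homogeneous spaces $K(A)/K_{J}(A)\cong G(A)/P_{J}(A)$ with their manifold (ind-variety) topology; these are genuinely different homotopy types (already for $SU(2)$ the topological building is $S^{2}$ while the discrete spherical building is an infinite wedge of $0$-spheres), so contractibility does not transfer formally from one to the other. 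Kitchloo proves contractibility of the topological building directly, via the Bruhat/Schubert-cell filtration indexed by the Weyl group: $\mathcal{X}(A)$ is exhibited as an increasing union in which each stage is coned off at the next, and it is the infiniteness of $W(A)$ (i.e.\ the infinite-type hypothesis) that keeps the process from terminating in a sphere-like object as in Solomon--Tits; this is precisely where the hypothesis on $A$ enters, and your sketch never makes that use explicit. Without an actual argument for this step -- which you concede is ``the real content'' of the cited theorem -- the proposal is an accurate outline of the known proof rather than a proof.
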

To simplify the homotopy colimit, we first construct a covariant functor from $\cat P(A)$ to the category $\cat Top$ of topology spaces:
\begin{align*}
\textbf{F}: \cat P(A) &\rightarrow \cat Top \\
J  &\mapsto BK_{J}(A)\\
H\subseteq J &\mapsto BK_{H}(A)\rightarrow BK_{J}(A).
\end{align*}
And we modify $\textbf{F}$ to $\textbf{F}'$, which sends the object $J$ to a homotopy replacement $X_J$ of $BK_J(A)$, and sends the morphism $H\subseteq J$ of sets to a cofibration map $X_H\subseteq X_J$ between topology spaces. See \cite{ZGR24} for more details. Then for any two subsets $H, J \subseteq I$, we have
$$X_{H\cap J}=X_{H}\cap X_{J}.$$
Let $P(A)$ be the set $\{J\in \cat P(A)\mid J\text{ is maximal under the inclusion of subsets}\}$.
Combining the homotopy replacement with Theorem \ref{s2t2}, we have
\begin{equation}\label{s2f1}
BK(A)\simeq \mathop{\rm hocolim}_{J \in \cat P(A)} BK_{J}(A)\simeq \mathop{\rm hocolim}_{J \in \cat P(A)} X_{J} \simeq \bigcup_{J \in \cat P(A)}X_{J}=\bigcup_{J \in P(A)}X_{J},
\end{equation}
which reduces the index set ${\cat P(A)}$ of the homotopy colimit to the set $P(A)$.


Here we use an example to illustrate these abstract symbols, and we further introduce a simplicial representation for $\cat P(A)$.

\begin{Ex}\label{s2e1}
Given a Cartan matrix $$A=\left(
\begin{array}{ccc}
2 & -1 & -3\\
-3 & 2 & -1\\
-2 & -4 & 2
\end{array}
\right),$$ then its all $2\times 2$ principal submatrices are $$\left(
\begin{array}{cc}
2 & -1 \\
-3 & 2
\end{array}
\right),~~~ \left(
\begin{array}{cc}
2 & -3\\
-2 & 2
\end{array}
\right),~~~ \left(
\begin{array}{cc}
2 & -1\\
-4 & 2
\end{array}
\right).$$ The first submatrix is the only one of finite type, hence $A$ is indefinite. Obviously, all rank 1 parabolic subgroups of $K(A)$ are of finite type.

The objects of $\cat P(A)$ are $\emptyset,\,\{ 1\},\,\{2\},\,\{3\},\,\{1, 2\}$, so $P(A)$ is $\{\{3\},\,\{1, 2\}\}$. In other words,

\begin{itemize}
\item $K_{\emptyset}(A),\,K_{1}(A),\,K_{2}(A),\,K_{3}(A),\,K_{12}(A)$ are finite parabolic subgroups,
\item $K_{13}(A),\,K_{23}(A)$ and $\,K_{123}(A)=K(A)$ are infinite parabolic subgroups,
\item $\,K_{3}(A),\,K_{12}(A)$ are maximal finite parabolic subgroups.
\end{itemize}
For convenience, we use the following simplicial complex to represent $\cat P(A)$.
\begin{center}
\bigskip
\begin{tikzpicture}
\filldraw[black] (0,0) circle (2pt) node[anchor=east]{1}
(2,0) circle (2pt) node[anchor=west]{2}
(1,1.73) circle (2pt)node[anchor=south]{3};
\draw (0,0)--(2,0);
\end{tikzpicture}
\end{center}

In fact, we may view $\cat P(A)$ as a \textbf{simplicial} \textbf{complex} and $P(A)$ as the set of its  \textbf{maximal}  \textbf{simplices}.

Furthermore, by Formula $\ref{s2f1}$, we get $BK(A)\simeq X_{12}\bigcup_{X_{\emptyset}} X_{3}$.
\end{Ex}

\subsection{A classification for rank 3 infinite
Cartan matrices}

For a given infinite Cartan matrix $A$, the set of objects of $\cat P(A)$ is not the only one by which we can obtain the homotopy type of $BK(A)$, but $P(A)$ is the minimal one among these sets. In fact, $\cat P(A)$ and $P(A)$ are equivalent in the sense that we can recover the objects of $\cat P(A)$ by adding all subsets of each element in $P(A)$.
To ignore the interference of indicators, we may define an equivalence relation on the set of rank $n$ Cartan matrices:  $A$ is equivalent to $B$ if there exists a permutation
$\sigma \in S_n$ such that $\sigma P(A)=P(B)$. Then we chose $P(A)$ as the representative element of each equivalent class and hence give a classification for rank $n$ infinite Cartan matrices up to symmetry of indices.

The simplicial representation of ${\cat P}(A)$ or $P(A)$ in Example \ref{s2e1} can be generalized to any rank Cartan matrix $A$. Let $|J|$ denote the number of elements in $J$. We could relate an object $J\in P(A)$ to a proper $|J|-1$ dimensional face of the $n-1$ dimensional simplex $I$ and we still use $J$ to denote a proper face that is also a $|J|-1$ dimensional simplex. Therefore $P(A)$ is viewed as a simplicial  complex.

For rank 3 infinite Kac-Moody groups, there are eight different forms of $P(A)$
\begin{align*}
&\left\{\{1\},~\{2\},~\{3\}\right\},&\left\{\{1,2\},~\{3\}\right\},&~~~~~~~~~\left\{\{1,3\},~\{2\}\right\},&\left\{\{2,3\},~\{1\}\right\},\\
&\left\{\{1,2\},~\{1,3\}\right\},&\left\{\{1,2\},~\{2,3\}\right\},&~~~~~~\left\{\{1,3\},~\{2,3\}\right\},&\left\{\{1,2\},~\{1,3\},~\{2,3\}\right\}.
\end{align*}
Up to symmetry of indices \{1, 2, 3\}, there are essentially only four classes. In what follows, we list these classes and their simplicial representations. We rank these classes by Roman numerals according to the number of rank 2 finite parabolic subgroups.
\begin{center}
\begin{tikzpicture}
\filldraw[black] (0,0) circle (2pt) node[anchor=east]{1}
(2,0) circle (2pt) node[anchor=west]{2}
(1,1.73) circle (2pt)node[anchor=south]{3};
\draw (1,-0.5) node{\rm (i). $\{\{1\},~\{2\},~\{3\}\}$};
\end{tikzpicture} \hspace{0.4cm}
\begin{tikzpicture}
\filldraw[black] (0,0) circle (2pt) node[anchor=east]{1}
(2,0) circle (2pt) node[anchor=west]{2}
(1,1.73) circle (2pt)node[anchor=south]{3};
\draw (1,-0.5) node{\rm (ii). $\{\{1,2\},~\{3\}\}$};
\draw (0,0)--(2,0);
\end{tikzpicture} \hspace{0.4cm}
\begin{tikzpicture}
\filldraw[black] (0,0) circle (2pt) node[anchor=east]{1}
(2,0) circle (2pt) node[anchor=west]{2}
(1,1.73) circle (2pt)node[anchor=south]{3};
\draw (1,-0.5) node{\rm (iii). $\{\{1,2\},~\{1,3\}\}$};
\draw (1,1.73)-- (0,0)--(2,0);
\end{tikzpicture} \hspace{0.4cm}
\begin{tikzpicture}
\filldraw[black] (0,0) circle (2pt) node[anchor=east]{1}
(2,0) circle (2pt) node[anchor=west]{2}
(1,1.73) circle (2pt)node[anchor=south]{3};
\draw (1,-0.5) node{\rm (iv). $\{\{1,2\},~\{1,3\},~\{2,3\}\}$};
\draw (1,1.73)-- (0,0)--(2,0)--(1,1.73);
\end{tikzpicture}
\end{center}
In the above picture, the indexing sets under these complexes denote their maximal simplices and represent their maximal parabolic subgroups of finite type. For the simplicial representation, a given simplicial complex represents a class of Kac-Moody groups with infinite type. Those faces that are in this complex represent their parabolic subgroups of finite type, and those simplices that are not in this complex represent their parabolic subgroups of infinite type. These restrictions uniquely determine the class of Kac-Moody groups.

\begin{Prop}\label{s3ht}
For the above four classes, the homotopy types of classifying spaces of infinite Kac-Moody groups are listed as follows.
\begin{enumerate}
\item[{\rm (i)}] $BK(A)\simeq X_1\cup X_2 \cup X_3;$
\item[{\rm (ii)}] $BK(A)\simeq X_{12}\cup X_3;$
\item[{\rm (iii)}] $BK(A)\simeq X_{12}\cup X_{13};$
\item[{\rm (iv)}] $BK(A)\simeq X_{12}\cup X_{13}\cup X_{23}.$
\end{enumerate}
\end{Prop}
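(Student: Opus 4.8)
The plan is to obtain each homotopy type directly from Formula \eqref{s2f1}, which already yields
$$BK(A)\simeq\bigcup_{J\in\cat P(A)}X_J=\bigcup_{J\in P(A)}X_J;$$
so the only task is to identify the set $P(A)$ of maximal finite-type subsets for each of the four classes. All the homotopy-theoretic content is packaged in Theorem \ref{s2t2} together with the cofibrant-replacement functor $\textbf{F}'$ (see \cite{ZGR24}), so the proof is essentially a matter of unwinding the combinatorics of $\cat P(A)$.

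First I would record which proper subsets $J\subsetneq I=\{1,2,3\}$ satisfy that $K_J(A)$ is of finite type. The set $\emptyset$ gives $K_\emptyset(A)=T$, and each singleton $\{i\}$ gives a rank $1$ parabolic whose Cartan matrix $(2)$ is positive definite; hence $\emptyset,\{1\},\{2\},\{3\}$ always lie in $\cat P(A)$, while the full set $I$ is excluded because $A$ is of infinite type. Therefore $\cat P(A)$ is determined entirely by which of the three pairs $\{1,2\},\{1,3\},\{2,3\}$ are of finite type, and by the table in \S\ref{s2.1} the pair $\{k,j\}$ is of finite type exactly when $a_{kj}a_{jk}<4$, i.e.\ when the corresponding $2\times 2$ principal submatrix of $A$ is positive definite. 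Running through the possibilities for the set of finite-type pairs --- none, one, two, or all three --- and using that a principal submatrix of a positive definite matrix is again positive definite (so $\cat P(A)$ is the downward closure of $P(A)$), one recovers exactly the eight forms of $P(A)$ displayed before the proposition, which reduce to the four classes under the $S_3$-action on $\{1,2,3\}$. Explicitly: in class (i) no pair is of finite type, so $P(A)=\{\{1\},\{2\},\{3\}\}$; in class (ii) only $\{1,2\}$ is, so $P(A)=\{\{1,2\},\{3\}\}$; in class (iii) the pairs $\{1,2\}$ and $\{1,3\}$ are, so $P(A)=\{\{1,2\},\{1,3\}\}$; in class (iv) all three are, so $P(A)=\{\{1,2\},\{1,3\},\{2,3\}\}$.

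Substituting these four descriptions of $P(A)$ into the displayed union yields statements (i)--(iv), completing the proof. I do not expect a genuine obstacle: the homotopy-theoretic input is already available, and the combinatorial part is elementary. The two points that merit a sentence of care are (a) the non-emptiness of each class --- in particular class (iv), which needs an indecomposable rank $3$ Cartan matrix with all three $2\times 2$ principal minors positive but $\det A\le 0$, for instance the matrix whose off-diagonal entries all equal $-1$ (determinant $0$, of affine type) --- and (b) the inheritance of finite type under passing to sub-Levi factors, which is exactly what guarantees that $\cat P(A)$ is recovered from $P(A)$ by adjoining all subsets, and hence (since the replacement functor satisfies $X_H\subseteq X_J$ whenever $H\subseteq J$) that the union over $\cat P(A)$ collapses to the union over $P(A)$.
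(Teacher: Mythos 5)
Your proposal is correct and follows essentially the same route as the paper, which obtains the proposition directly by combining Kitchloo's theorem with the cofibrant replacement (Formula \ref{s2f1}) and then reading off $P(A)$ for each of the four classes of the classification. The extra remarks on non-emptiness of the classes and on finite type being inherited by sub-Levi factors are sound and consistent with the paper's criterion that a rank 2 pair $\{k,j\}$ is of finite type exactly when $a_{kj}a_{jk}<4$.
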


\subsection{An important result for finite Kac-Moody groups }
To compute the cohomology of $BK(A)$ for each class of Proposition \ref{s3ht}, we may use the Mayer-Vietoris sequence. For the case that the coefficient is a field, we could refine the Mayer-Vietoris sequence into a split short exact sequence, which will be used repeatedly in the computation of cohomology groups of $BK(A)$.
{\begin{Lem}\label{s3l1}
For the push-out diagram:
$$\CD
 X_1\cap X_2@>j_1>>X_1 \\
  @V j_2 VV @V i_1 VV  \\
  X_2 @>i_2>> X_1\cup X_2,
\endCD$$
there is a long exact sequence :
$$\cdots\stackrel{i^{*}}\to H^{*}(X_{1})\oplus  H^{*}(X_{2})\stackrel{j^{*}}\to H^{*}(X_{1}\cap X_{2})\stackrel{\delta^{*}}\to H^{*+1}(X_{1}\cup X_{2})\stackrel{i^{*+1}}\to$$
and a short exact sequence :
$$\xymatrix@C=0.5cm{
  0 \ar[r] &\Sigma {\rm coker} j^{*-1} \ar[rr] &&H^{*}(X_{1}\cup X_{2})\ar[rr] &&\ker j^{*}  \ar[r] & 0 }$$
 where the homomorphism $j: H^{*}(X_{1})\oplus  H^{*}(X_{2})\to H^{*}(X_{1}\cap X_{2})$ is given by $j(u,v)=j_1^*(u)-j_2^*(v)$.
\end{Lem}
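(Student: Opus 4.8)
The plan is to start from the Mayer--Vietoris long exact sequence associated to the push-out square and then show that, because the coefficients lie in a field, it breaks into short exact sequences of the stated form. First I would recall that for a push-out of spaces along cofibrations — which is exactly the situation guaranteed by the homotopy replacement $\textbf{F}'$, so that $X_1\cup X_2$ is the honest union and $X_1\cap X_2$ the honest intersection — the Mayer--Vietoris sequence
$$\cdots\to H^{*}(X_1\cup X_2)\xrightarrow{i^{*}} H^{*}(X_1)\oplus H^{*}(X_2)\xrightarrow{j^{*}} H^{*}(X_1\cap X_2)\xrightarrow{\delta^{*}} H^{*+1}(X_1\cup X_2)\to\cdots$$
is exact, with connecting map $\delta^{*}$ raising degree by one and $j^{*}(u,v)=j_1^{*}(u)-j_2^{*}(v)$. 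This is the standard argument from the relative cohomology exact sequence of the pair together with excision, and I would simply cite it.

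Next I would extract the short exact sequence. From exactness of the long sequence, for each degree $*$ there is a short exact sequence
$$0\to \operatorname{coker} j^{*-1}\xrightarrow{\delta^{*-1}} H^{*}(X_1\cup X_2)\xrightarrow{i^{*}} \ker j^{*}\to 0,$$
since $\operatorname{im}\delta^{*-1}\cong H^{*-1}(X_1\cap X_2)/\ker\delta^{*-1}=H^{*-1}(X_1\cap X_2)/\operatorname{im} j^{*-1}=\operatorname{coker} j^{*-1}$, and $\operatorname{im} i^{*}=\ker j^{*}$. The degree shift is recorded by the $\Sigma$ symbol: $\Sigma\operatorname{coker} j^{*-1}$ sits in degree $*$, matching the convention that $\Sigma$ raises degree by one. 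So the sequence
$$0\to \Sigma\operatorname{coker} j^{*-1}\to H^{*}(X_1\cup X_2)\to \ker j^{*}\to 0$$
is exact in each degree.

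Finally I would address splitting. Over a field every subspace is a direct summand, so as graded vector spaces the extension splits and $H^{*}(X_1\cup X_2)\cong \Sigma\operatorname{coker} j^{*-1}\oplus \ker j^{*}$; this is all that is needed downstream, since Theorem \ref{RCHG} records only the group (vector-space) structure. I do not expect any serious obstacle here: the only point requiring care is bookkeeping of the degree shift in $\delta^{*}$ and making sure the cofibration set-up legitimately identifies $X_{H\cap J}=X_H\cap X_J$ so that the classical Mayer--Vietoris sequence applies verbatim — but that identification was already arranged when $\textbf{F}$ was replaced by $\textbf{F}'$, so I would just invoke it. If one wanted the splitting to be natural or multiplicative that would be more delicate, but the statement asks only for the (split) short exact sequence of graded vector spaces, which follows formally.
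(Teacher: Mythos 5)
Your proposal is correct and follows essentially the same route as the paper: both deduce the long exact sequence from the Mayer--Vietoris pair $(X_1,X_2)$ and extract the short exact sequence via $\mathrm{im}\,i^{*}=\ker j^{*}$ and $\Sigma:\mathrm{coker}\,j^{*-1}\cong\mathrm{im}\,\delta^{*-1}$ induced by the connecting map. Your extra remark on splitting over a field goes slightly beyond what the lemma literally asserts, but it matches how the lemma is used in the later computations.
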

\begin{proof}
We can immediately obtain the long exact sequence, since $(X_{1},~X_{2})$ is a Mayer-Vietoris pair. By the exactness of this sequence, we have $$\mathrm{im}\,i^{*}\cong \ker\,j^{*}, \mathrm{im}\,\delta^{*-1}\cong H^{*-1}( X_1\cap X_2)/\ker\,\delta^{*-1}\cong H^{*-1}( X_1\cap X_2)/ \mathrm{im}\,j^{*-1}\cong \mathrm{coker}\,j^{*-1}.$$
The second isomorphism $\Sigma: \mathrm{coker}\,j^{*-1} \cong \mathrm{im}\,\delta^{*-1}$ is induced by $\delta^{*-1}$.This proves the lemma.
\end{proof}}

The input of the smallest piece in the Mayer-Vietoris sequences for computing the cohomology of $BK(A)$ is the cohomology of the classifying space of finite Kac-Moody groups (compact Lie groups). The following important result for finite Kac-Moody groups is due to Borel.
\begin{Thm}[\cite{Bor53}]\label{s2t1}
Let $K$ be a compact Lie group and $T$ be its maximal torus subgroup. Let $W$ be the Weyl group of $K$. Then the inclusion $ T\hookrightarrow K$ of groups induces an injective homomorphism $\rho^{*}_{\mathbb{Q}}: H^{*}(BK;\,\mathbb{Q}) \rightarrow H^{*}(BT;\,\mathbb{Q})$ and the image of $\rho^{*}_{\mathbb{Q}}$ is $H^{*}(BT;\,\mathbb{Q})^{W}$.
\end{Thm}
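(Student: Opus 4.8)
The plan is to study the fibre bundle $K/T \to BT \xrightarrow{\rho} BK$ arising from the Borel constructions $BT = EK/T$ and $BK = EK/K$. I will first assume $K$ connected; the general case reduces to this because rationally $H^*(BK;\mathbb{Q}) \cong H^*(BK_0;\mathbb{Q})^{\pi_0(K)}$ (Serre spectral sequence of $BK_0 \to BK \to B\pi_0(K)$, whose base is rationally trivial), while $W = N_K(T)/T$ is an extension of $\pi_0(K)$ by the Weyl group $W_0$ of the identity component $K_0$, so that $H^*(BT;\mathbb{Q})^{W} = \bigl(H^*(BT;\mathbb{Q})^{W_0}\bigr)^{\pi_0(K)}$. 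For connected $K$ I will use two classical inputs: $H^*(BT;\mathbb{Q}) = \mathbb{Q}[t_1,\dots,t_n]$ with $\deg t_i = 2$, and, by the Hopf--Borel theorem, $H^*(BK;\mathbb{Q})$ is a polynomial algebra on $n = \operatorname{rank}K$ generators of even degree; and the flag manifold $K/T$ has $H^*(K/T;\mathbb{Q})$ concentrated in even degrees with $\dim_{\mathbb{Q}} H^*(K/T;\mathbb{Q}) = |W|$.

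Two facts then follow quickly. First, $\operatorname{im}\rho^*_{\mathbb{Q}} \subseteq H^*(BT;\mathbb{Q})^{W}$: for $n \in N(T)$ representing $w \in W$, conjugation $c_n \colon K \to K$ is inner, hence $Bc_n \simeq \operatorname{id}_{BK}$, and the commuting square relating $c_n$ to the $W$-action on $BT$ gives $\rho \circ B(c_n|_T) \simeq \rho$, so every class in the image is $W$-invariant. Second, running the Serre spectral sequence of $K/T \to BT \xrightarrow{\rho} BK$ over $\mathbb{Q}$ (the local system is trivial since $K$ is connected), $E_2^{p,q} = H^p(BK;\mathbb{Q}) \otimes H^q(K/T;\mathbb{Q})$ is concentrated in even total degree, and since every differential $d_r$ shifts total degree by $1$ the spectral sequence collapses at $E_2$. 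Consequently the edge homomorphism $H^*(BK;\mathbb{Q}) = E_2^{*,0} = E_\infty^{*,0} \hookrightarrow H^*(BT;\mathbb{Q})$, which is exactly $\rho^*_{\mathbb{Q}}$, is injective (alternatively, the bundle transfer $\tau$ satisfies $\tau \circ \rho^*_{\mathbb{Q}} = \chi(K/T)\cdot\operatorname{id} = |W|\cdot\operatorname{id}$, which is invertible over $\mathbb{Q}$), and collapse identifies $\operatorname{gr}H^*(BT;\mathbb{Q})$ with the free $H^*(BK;\mathbb{Q})$-module $H^*(BK;\mathbb{Q})\otimes H^*(K/T;\mathbb{Q})$ of rank $|W|$; lifting a homogeneous basis (every ring in sight is connected graded) shows $H^*(BT;\mathbb{Q})$ is a free $H^*(BK;\mathbb{Q})$-module of rank $|W|$.

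To finish I would run a rank count. By the Chevalley--Shephard--Todd theorem the finite Coxeter group $W$ acting on $P := H^*(BT;\mathbb{Q})$ as a reflection group has polynomial invariant ring $S := P^{W}$, over which $P$ is free of rank $|W|$. Via $\rho^*_{\mathbb{Q}}$, the ring $R := H^*(BK;\mathbb{Q})$ is a subring of $S$, and $R$ and $S$ are both polynomial in $n$ variables. Since $P$ is module-finite over $R$ and $S \subseteq P$, $S$ is a finitely generated $R$-module; being Cohen--Macaulay of Krull dimension $n = \dim R$ over the regular ring $R$, it is $R$-free, and multiplicativity of ranks gives $|W| = \operatorname{rank}_R P = (\operatorname{rank}_R S)(\operatorname{rank}_S P) = (\operatorname{rank}_R S)\,|W|$, so $\operatorname{rank}_R S = 1$. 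A rank-one free graded $R$-module containing $R$ equals $R$, so $\rho^*_{\mathbb{Q}}$ carries $H^*(BK;\mathbb{Q})$ isomorphically onto $H^*(BT;\mathbb{Q})^{W}$, which is the claim.

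The step I expect to be the \textbf{main obstacle} is supplying the auxiliary facts about $K/T$ without circularity. The evenness of $H^*(K/T;\mathbb{Q})$ should be obtained from the Bruhat decomposition of $K/T \cong G/B$ into Schubert cells, all of even real dimension; and then $\dim_{\mathbb{Q}} H^*(K/T;\mathbb{Q}) = \chi(K/T) = |(K/T)^{T}| = |W|$ by the Lefschetz fixed point theorem applied to the left $T$-action on $K/T$, whose fixed set is $N(T)/T$. Both arguments are independent of the Serre spectral sequence used above, and the commutative-algebra rank count is then routine.
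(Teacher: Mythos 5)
Your proposal is correct, but there is no internal argument to compare it with: the paper states Theorem \ref{s2t1} as a classical result quoted from Borel \cite{Bor53} and uses it as a black box, so it contains no proof of this statement. Your route --- $W$-invariance of the image via inner automorphisms, injectivity from the collapse of the rational Serre spectral sequence of $K/T\to BT\to BK$ (or from the Becker--Gottlieb transfer, since $\chi(K/T)=|W|\neq 0$), a Leray--Hirsch argument making $H^{*}(BT;\,\mathbb{Q})$ free of rank $|W|$ over $H^{*}(BK;\,\mathbb{Q})$, and then the rank count against Chevalley--Shephard--Todd to force $H^{*}(BK;\,\mathbb{Q})=H^{*}(BT;\,\mathbb{Q})^{W}$ --- is a complete and standard modern proof, granted the classical inputs you name (polynomiality of $H^{*}(BK;\,\mathbb{Q})$ on $\operatorname{rank}K$ even generators, evenness and total dimension $|W|$ of $H^{*}(K/T;\,\mathbb{Q})$ via the Bruhat decomposition). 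Two small caveats: the Hopf--Borel polynomiality of $H^{*}(BK;\,\mathbb{Q})$ is itself part of Borel's 1953 package (via the transgression theorem), so your argument is not ``from scratch,'' though it is not circular, since that input is independent of the restriction-image statement being proved; and in your reduction to connected $K$ you should make explicit that the isomorphism $H^{*}(BK_{0};\,\mathbb{Q})\cong H^{*}(BT;\,\mathbb{Q})^{W_{0}}$ is $\pi_{0}(K)$-equivariant (naturality of $\rho$ under conjugation by elements of $N_{K}(T)$), which is what allows you to pass to $\pi_{0}$-invariants on both sides --- a harmless point here, since the paper only ever applies the theorem to connected parabolic subgroups $K_{J}(A)$.
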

\begin{Rem}\label{s2r1}
If $H^{*}(BK;\,\mathbb{Z})$ is $p$-torsion free, then the homomorphism $\rho^{*}_{\mathbb{F}_{p}}: H^{*}(BK;\,\mathbb{F}_{p}) \rightarrow H^{*}(BT;\,\mathbb{F}_{p})$ is also injective and the image of $\rho^{*}_{\mathbb{F}_{p}}$ is $H^{*}(BT;\,\mathbb{Z})^{W}\otimes\mathbb{F}_{p}\cong H^{*}(BT;\,\mathbb{F}_{p})^{W}$. See \cite[p. 400, Theorem 3.29]{MT91} for more details.
\end{Rem}

\section{Rational and mod \texorpdfstring{$p(p>2)$}{} cohomology groups \label{s3}}
In this section, we compute the rational and mod $p$ ($p>2$) cohomology groups of their classifying spaces respectively.

\subsection{The rational cohomology groups}

Recall the action of Weyl group $W(A)$ on $BT=EK(A)/T$ in subsection \ref{s2.1}, we have the following lemma.
\begin{Lem}\label{s3l2}
Let $K(A)$ be a Kac-Moody group and $T$ be its maximal torus subgroup. The map $\rho: BT \rightarrow BK(A)$ between classifying spaces is induced by the inclusion $T\hookrightarrow K(A)$. If $\rho^{*}$ is the cohomology homomorphism induced by $\rho$, then $\mathrm{im}\,\rho^{*}\subseteq H^{*}(BT; \mathbb{Z})^{W(A)}$.
\end{Lem}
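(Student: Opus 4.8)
The plan is to factor the map $\rho\colon BT\to BK(A)$ through the classifying space of a parabolic subgroup of finite type and then invoke Borel's theorem (Theorem \ref{s2t1}) together with naturality of the action of the Weyl group on $H^*(BT)$. First I would recall that $T=G_\emptyset(A)\subseteq G_J(A)$ for every $J\subseteq I$, so for each $J$ with $K_J(A)$ of finite type the inclusion $T\hookrightarrow K_J(A)\hookrightarrow K(A)$ gives a commutative diagram $BT\to BK_J(A)\to BK(A)$, and hence $\rho$ factors as $\rho_J^{BK}\circ \iota_J$ where $\iota_J\colon BT\to BK_J(A)$ is induced by $T\hookrightarrow K_J(A)$. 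Consequently $\mathrm{im}\,\rho^*\subseteq \mathrm{im}\,\iota_J^*$ for every such $J$, and it suffices to show that for each $i\in I$ the rank 1 parabolic $K_i(A)$ (which is always of finite type, being $SU(2)$ or $U(2)$ up to covering) satisfies $\mathrm{im}\,\iota_i^*\subseteq H^*(BT;\mathbb{Z})^{\langle\sigma_i\rangle}$, since $\bigcap_{i\in I} H^*(BT;\mathbb{Z})^{\langle\sigma_i\rangle}=H^*(BT;\mathbb{Z})^{W(A)}$ because the $\sigma_i$ generate $W(A)$.

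Next I would handle the rank 1 case directly. By Borel's theorem applied to the compact Lie group $K_i(A)$ with maximal torus $T$ and Weyl group $W_i(A)=\langle\sigma_i\rangle\cong\mathbb{Z}/2$, the rational image of $H^*(BK_i(A);\mathbb{Q})\to H^*(BT;\mathbb{Q})$ is $H^*(BT;\mathbb{Q})^{\sigma_i}$. To get the integral statement I would use that $H^*(BK_i(A);\mathbb{Z})$ is torsion-free (the relevant groups being products of $SU(2)$'s and tori, or at worst $U(2)$, all of which have torsion-free classifying-space cohomology), so that $\mathrm{im}\,\iota_i^*=H^*(BT;\mathbb{Z})^{\sigma_i}$ as in Remark \ref{s2r1}; in any case the containment $\mathrm{im}\,\iota_i^*\subseteq H^*(BT;\mathbb{Z})^{\sigma_i}$ is all that is needed and follows from $W_i(A)$-equivariance of $\iota_i$ up to homotopy (the $\sigma_i$-action on $BT$ is the one induced by conjugation in $N(T)\cap K_i(A)$, which becomes trivial after composing to $BK_i(A)$). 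Then $\mathrm{im}\,\rho^*\subseteq\bigcap_{i}\mathrm{im}\,\iota_i^*\subseteq\bigcap_i H^*(BT;\mathbb{Z})^{\sigma_i}=H^*(BT;\mathbb{Z})^{W(A)}$, which is the claim.

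The step I expect to be the main obstacle is justifying that the composite $BT\to BK_i(A)$ is genuinely invariant (up to homotopy) under the $\sigma_i$-action on $BT$ described in subsection \ref{s2.1}, i.e. that the explicit action $\sigma_i(\omega_1,\dots,\omega_n)=(\omega_1,\dots,\omega_j-\alpha_j,\dots,\omega_n)$ on $H^*(BT)$ is realized by a self-map of $BT$ over $BK_i(A)$. This is exactly the statement that conjugation by a representative $n_i\in N(T)\cap K_i(A)$ of $\sigma_i$ gives a map $c_{n_i}\colon BT\to BT$ with $\iota_i\circ c_{n_i}\simeq \iota_i$ (since $c_{n_i}$ extends to the inner automorphism of $K_i(A)$, which is homotopic to the identity on $BK_i(A)$), combined with the identification of $(c_{n_i})^*$ on $H^*(BT)$ with $\sigma_i$ from the description of the $W(A)$-action. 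Once this is in place the argument is formal: apply $\iota_i^*$, note $\iota_i^*=c_{n_i}^*\circ\iota_i^*=\sigma_i\circ\iota_i^*$, so $\mathrm{im}\,\iota_i^*$ is fixed by $\sigma_i$; intersecting over $i\in I$ and using that the $\sigma_i$ generate $W(A)$ yields $\mathrm{im}\,\rho^*\subseteq H^*(BT;\mathbb{Z})^{W(A)}$.
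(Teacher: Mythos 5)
Your argument is correct, but it takes a more roundabout route than the paper. The paper works directly with the model $BT=EK(A)/T$ and the action $\varphi\cdot bT=bsT$ for a representative $s\in N(T)$ of $\varphi\in W(A)$: since $s\in K(A)$, one has $\rho(\varphi\cdot bT)=bsK(A)=bK(A)=\rho(bT)$, so $\rho\circ\varphi=\rho$ on the nose, hence $\varphi^{*}\circ\rho^{*}=\rho^{*}$ and $\mathrm{im}\,\rho^{*}\subseteq H^{*}(BT;\mathbb{Z})^{W(A)}$ --- all of $W(A)$ at once, with no parabolic subgroups and no homotopies. Your proof reaches the same conclusion by factoring $\rho$ through the rank 1 parabolics $BK_{i}(A)$, using the standard fact that $Bc_{n_{i}}\simeq\mathrm{id}$ on $BK_{i}(A)$ for the inner automorphism by a representative $n_{i}\in N(T)\cap K_{i}(A)$, so that $\iota_{i}\circ c_{n_{i}}\simeq\iota_{i}$, and then intersecting the invariants of the generators: this is valid, since the identification of $(c_{n_{i}})^{*}$ with the $\sigma_{i}$-action is exactly the paper's description of the $W(A)$-action on $BT$, and $\bigcap_{i} H^{*}(BT;\mathbb{Z})^{\sigma_{i}}=H^{*}(BT;\mathbb{Z})^{W(A)}$ because the $\sigma_{i}$ generate $W(A)$. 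However, the detour buys nothing: the same conjugation argument applies verbatim with $K(A)$ in place of $K_{i}(A)$ and with an arbitrary $\varphi\in W(A)$, since a representative of $\varphi$ already lies in $K(A)$; finiteness of the parabolic plays no role, and your appeal to Borel's theorem and to torsion-freeness is superfluous for the containment, as you yourself note. The one concrete advantage of the paper's formulation is that with the explicit model $EK(A)/T$ the relation $\rho\circ\varphi=\rho$ holds strictly rather than up to homotopy, so the issue you flag as the main obstacle (realizing the $\sigma_{i}$-action by a self-map of $BT$ over the classifying space) never arises.
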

\begin{proof}
For any $\varphi \in W(A)$ represented by $s \in N(T)$ and $bT \in BT$, we have
$$\rho(bT)= bK(A)= bsK(A)= \rho(bsT)= \rho(\varphi\circ bT),$$
which implies $\rho\circ\varphi= \rho$. This relation induces a cohomology homomorphism relation:
$\varphi^{*}\circ\rho^{*} =\rho^{*}$, so $\mathrm{im}\,\rho^{*}\subseteq H^{*}(BT)^{W(A)} $. This finishes the proof of the lemma.
\end{proof}

From Lemma \ref{s3l1}, we immediately obtain a cohomology commutative diagram.
\begin{Lem}\label{s3l3}
Let $K(A)$ be a Kac-Moody group, and $T$ be its maximal torus subgroup. If $J_{1},J_{2} \in \cat P(A)$, then there is an integral cohomology
commutative diagram
$$\hspace{-1.5cm}\xymatrix{
& H^{*-1}(X_{J_{1}\cap J_{2}}) \ar[d]_{\rho^{*}_{J_{1}\cap J_{2}}} \ar[r]^{\delta} & H^{*}(X_{J_{1}}\cup X_{J_{2}}) \ar[d]_{\rho^{*}_{J_{1}\cup J_{2}}} \ar[r]^{\hspace{-0.6cm}i^{*}} & H^{*}(X_{J_{1}})\oplus H^{*}(X_{J_{2}}) \ar[d]_{\rho^{*}_{J_{1}}\oplus \rho^{*}_{J_{2}}} \ar[r]^{\hspace{0.9cm}j^{*}} & H^{*}(X_{J_{1}\cap J_{2}}) \ar[d]_{\rho^{*}_{J_{1}\cap J_{2}}} & \\
& H^{*-1}(BT)^{W_{J_{1}\cap J_{2}}(A)} \ar[r]^{\delta} &H^{*}(BT)^{W_{J_{1}\cup J_{2}}(A)} \ar[r]^{\hspace{-0.8cm}i^{*}} &H^{*}(BT)^{W_{J_{1}}(A)}\oplus H^{*}(BT)^{W_{J_{2}}(A)} \ar[r]^{\hspace{0.9cm}j^{*}} & H^{*}(BT)^{W_{J_{1}\cap J_{2}}(A)}. & }$$
\end{Lem}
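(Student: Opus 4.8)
The plan is to assemble the desired commutative diagram from two ingredients that are already available to us. The first is the long exact Mayer–Vietoris sequence from Lemma \ref{s3l1} applied to the push-out square with $X_1 = X_{J_1}$, $X_2 = X_{J_2}$, and $X_1 \cap X_2 = X_{J_1 \cap J_2}$ (the last identification being the property $X_{H \cap J} = X_H \cap X_J$ of the homotopy replacement $\mathbf{F}'$, which forces $X_{J_1} \cup X_{J_2} \simeq BK(A)$ only in the top-dimensional cases but in any case gives a genuine push-out of cofibrations, so Lemma \ref{s3l1} applies verbatim). This produces the top row. The second ingredient is Borel's theorem (Theorem \ref{s2t1}, Remark \ref{s2r1}): since each $J_i$ and $J_1 \cap J_2$ lies in $\cat P(A)$, the groups $K_{J_i}(A)$ and $K_{J_1 \cap J_2}(A)$ are of finite type, so $H^*(BK_{J}(A)) \cong H^*(BT)^{W_J(A)}$ via $\rho^*_J$ for each of these $J$; transporting along these isomorphisms, the Mayer–Vietoris long exact sequence for $BT$ with its various Weyl-group invariants as terms is exactly the bottom row (here one uses that $X_J \simeq BK_J(A)$, so $H^*(X_J)$ may be replaced by $H^*(BK_J(A))$).

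Next I would check that each of the four squares commutes. The two outer vertical maps and the inner vertical maps are all instances of $\rho^*_J$ for the appropriate $J$, and the horizontal maps $i^*$, $j^*$, $\delta$ are the Mayer–Vietoris maps. Commutativity of the square built on $i^*$ is immediate: the maps $\rho_J \colon BT \to BK_J(A)$ are compatible with the inclusions $BK_{J_1 \cap J_2} \hookrightarrow BK_{J_i}$ (both are induced by group inclusions containing the common $T$), and $i^*$ is just the pair of restriction maps, so the square is the cohomology of a commuting square of spaces. The same reasoning handles the square built on $j^*$, noting that $j^*(u,v) = j_1^*(u) - j_2^*(v)$ is the difference of the two restrictions and $\rho^*$ is additive. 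The square involving the connecting map $\delta$ requires slightly more care: one must observe that the Mayer–Vietoris connecting homomorphism is natural with respect to maps of push-out squares, and that the single space $BT$, mapping compatibly to all four corners $X_{J_1 \cap J_2}$, $X_{J_1}$, $X_{J_2}$, $X_{J_1} \cup X_{J_2}$ via $\rho$, constitutes such a map of push-out squares (the "push-out" on the $BT$ side being the trivial one where all four corners are $BT$ and all maps are identities). Naturality of $\delta$ then yields $\rho^*_{J_1 \cup J_2} \circ \delta = \delta \circ \rho^*_{J_1 \cap J_2}$. Finally, one checks that the bottom-row $\delta$ genuinely lands in, and is defined on, the invariant subspaces: this follows because on $BT$ all the relevant subgroups act, the connecting map for the trivial push-out on $BT$ is an honest map $H^{*-1}(BT) \to H^*(BT)$ commuting with the $W(A)$-action, and it restricts to the invariants appearing in the diagram — indeed it must, since the outer square commutes and the vertical maps have images in the invariants by Lemma \ref{s3l2}.

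The step I expect to be the genuine obstacle is pinning down the bottom row precisely — that is, verifying that after transporting via Borel's isomorphisms the Mayer–Vietoris sequence of $X_{J_1} \cup X_{J_2}$ becomes a sequence whose terms are literally $H^*(BT)^{W_{J_i}(A)}$, $H^*(BT)^{W_{J_1 \cap J_2}(A)}$, and a term $H^*(BT)^{W_{J_1 \cup J_2}(A)}$ that one is free to \emph{name} this way. For the three corners in $\cat P(A)$ this is exactly Remark \ref{s2r1} (with a word to be said about $p$-torsion when the coefficient field is $\mathbb{F}_p$ — but the lemma is stated integrally, and in that generality the bottom row is simply the Mayer–Vietoris sequence for the constant diagram on $BT$, whose terms one restricts to the stated invariant subspaces, the restriction being legitimate precisely by Lemma \ref{s3l2}). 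No deep computation is needed; the content is the bookkeeping of naturality of Mayer–Vietoris and the identification of corners via Borel.
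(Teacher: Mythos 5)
Your handling of the three squares is fine: naturality of the Mayer--Vietoris sequence with respect to the map from the trivial covering of $BT$ (all four corners $BT$) to the covering $\{X_{J_1},X_{J_2}\}$ of $X_{J_1}\cup X_{J_2}$, combined with Lemma \ref{s3l2} for the corners indexed by $J_1$, $J_2$, $J_1\cap J_2$, is exactly the ``by naturality'' step the paper leaves implicit, and your observation that the bottom connecting map is forced to be zero is correct. The genuine gap is earlier: you never justify that the middle vertical arrow is well defined, i.e.\ that the image of the map $H^{*}(X_{J_1}\cup X_{J_2};\mathbb{Z})\to H^{*}(BT;\mathbb{Z})$ induced by $X_{\emptyset}\subseteq X_{J_1}\cup X_{J_2}$ lies in $H^{*}(BT)^{W_{J_1\cup J_2}(A)}$. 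This is the one nontrivial point of the lemma and the entire content of the paper's proof. Your two ways of covering it both fail: Borel's theorem cannot identify the corners here because the lemma is integral (for instance $BG_2$ has $2$-torsion, so $\rho^{*}_{\mathbb{Z}}$ is not an isomorphism onto the invariants), and Lemma \ref{s3l2} cannot be applied to the union, because --- as you yourself note in your first paragraph --- $X_{J_1}\cup X_{J_2}$ is in general not equivalent to the classifying space of any Kac--Moody group: $J_1\cup J_2$ may be of infinite type and hence not an object of $\cat P(A)$, and even then the union may be only a proper piece of $BK_{J_1\cup J_2}(A)$ (e.g.\ $X_{12}\cup X_{13}$ in case (iv) of Proposition \ref{s3ht}).

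The missing step is short but indispensable. The map $\rho$ factors through $X_{J_1}$ and also through $X_{J_2}$, so by Lemma \ref{s3l2} applied to $K_{J_1}(A)$ and to $K_{J_2}(A)$ the image of $\rho^{*}$ on $H^{*}(X_{J_1}\cup X_{J_2})$ is contained in $H^{*}(BT)^{W_{J_1}(A)}\cap H^{*}(BT)^{W_{J_2}(A)}$; since $W_{J_1\cup J_2}(A)$ is generated by $W_{J_1}(A)$ and $W_{J_2}(A)$, this intersection equals $H^{*}(BT)^{W_{J_1\cup J_2}(A)}$ --- equivalently, it is $\ker j^{*}$ of the bottom row, which is how the paper phrases it using Lemma \ref{s3l1}. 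Once this identification is inserted, your argument closes and coincides in substance with the paper's proof.
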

\begin{proof}
From Lemma \ref{s3l2}, it follows that the homomorphisms $\rho^{*}_{J_{1}}$, $\rho^{*}_{J_{2}}$, and $\rho^{*}_{J_{1}\cap J_{2}}$ are well defined. Note that
\begin{align*}
H^{*}(BT)^{W_{J_{1}\cup J_{2}}(A)} &\cong H^{*}(BT)^{W_{J_{1}}(A)}\cap H^{*}(BT)^{W_{J_{2}}(A)} \\
&\cong \{(u,~v)\in H^{*}(BT)^{W_{J_{1}}(A)}\oplus H^{*}(BT)^{W_{J_{2}}(A)} \} \cong \ker\;j^{*},
\end{align*}
which, combined with Lemma \ref{s3l1}, implies that $\rho^{*}_{J_{1}\cup J_{2}}$ is well defined. By the naturality property, the above diagram is commutative, which completes the proof.
\end{proof}

According to the classification \ref{s3ht} of rank 3 infinite Kac-Moody groups, we obtain the rational cohomology groups of their classifying spaces for each class as follows;
\begin{Thm}\label{s3t1}
For rank 3 infinite Kac-Moody groups, the rational cohomology groups of their classifying spaces are represented as follows:
\begin{enumerate}
\item[{\rm (i)}] $\Sigma(P-(P_{1}+P_{2}))\oplus \Sigma(P-(P_{12}+P_{3}))\oplus P_{123};$
\item[{\rm (ii)}] $\Sigma(P-(P_{12}+P_{3}))\oplus P_{123};$
\item[{\rm (iii)}] $\Sigma(P_{1}-(P_{12}+P_{13}))\oplus P_{123};$
\item[{\rm (iv)}] $\Sigma^{2}(P-(P_{1}+P_{2}+P_{3}))\oplus \Sigma(P_{1}\cap (P_{2}+P_{3})-(P_{12}+P_{13}))\oplus P_{123}.$
\end{enumerate}
\end{Thm}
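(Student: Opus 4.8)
The plan is to treat each of the four classes of Proposition \ref{s3ht} separately, applying Lemma \ref{s3l1} to the pushout decomposition of $BK(A)$ given there, and feeding in the identifications $H^*(X_J;\mathbb{Q})\cong H^*(BT;\mathbb{Q})^{W_J(A)}=P_J$ supplied by Borel's Theorem \ref{s2t1} (valid since each $X_J$ with $J\in\cat P(A)$ models $BK_J(A)$ for a compact Lie group $K_J(A)$), together with the commutative ladder of Lemma \ref{s3l3}. Throughout, all cohomology is with $\mathbb{Q}$ coefficients, so every $H^*(X_J)$ is a $\mathbb{Q}$-vector space and the short exact sequence of Lemma \ref{s3l1} splits; hence in each case $H^*(BK(A);\mathbb{Q})\cong \ker j^{*}\ \oplus\ \Sigma\,\mathrm{coker}\,j^{*-1}$, and the work is to identify $\ker j^*$ and $\mathrm{coker}\,j^*$ explicitly as sums of the $P_J$ and quotients thereof.

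The easiest cases are (ii) and (iii), where $P(A)$ has exactly two maximal simplices and $BK(A)\simeq X_{J_1}\cup X_{J_2}$ with $X_{J_1}\cap X_{J_2}=X_{J_1\cap J_2}$. For (ii), $J_1=\{1,2\}$, $J_2=\{3\}$, $J_1\cap J_2=\emptyset$, so $j^*\colon P_{12}\oplus P_3\to P$, $(u,v)\mapsto u-v$ (using the inclusions $P_{12},P_3\subseteq P$). Since $W_{12}(A)$ and $W_3(A)$ together generate $W(A)$, $\ker j^*\cong P_{12}\cap P_3=P_{123}$, and $\mathrm{coker}\,j^*=P/(P_{12}+P_3)=P-(P_{12}+P_3)$, giving exactly (ii). Case (iii) is identical with $J_1=\{1,2\}$, $J_2=\{1,3\}$, $J_1\cap J_2=\{1\}$: then $j^*\colon P_{12}\oplus P_{13}\to P_1$, $\ker j^*\cong P_{12}\cap P_{13}=P_{123}$ (again $W_{12}$ and $W_{13}$ generate $W$), and $\mathrm{coker}\,j^*=P_1-(P_{12}+P_{13})$.

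For (i) and (iv), $BK(A)$ is a union of three pieces, so I would build it up in two stages. In case (i), write $Y=X_1\cup X_2$ and $BK(A)\simeq Y\cup X_3$ with $Y\cap X_3=X_\emptyset$ (since $X_i\cap X_3=X_\emptyset$ for $i=1,2$). The first pushout ($X_1\cup X_2$, intersection $X_\emptyset$) gives $H^*(Y)\cong P_{12}\oplus\Sigma(P-(P_1+P_2))$ where $P_{12}=P_1\cap P_2=H^*(BT)^{W_1\cdot W_2}$ sits inside $H^*(X_\emptyset)=P$; the second pushout then has $j^*\colon H^*(Y)\oplus P_3\to P$. Here the care needed is to track how $H^*(Y)$ maps to $H^*(X_\emptyset)=P$: the summand $\Sigma(P-(P_1+P_2))$ is in the image of $\delta$, hence maps to zero in $H^*(X_\emptyset)$, so effectively $j^*$ restricted to $H^*(Y)$ factors through $P_{12}\hookrightarrow P$, whence the second-stage kernel is $P_{12}\cap P_3=P_{123}$ and its cokernel is $P-(P_{12}+P_3)$; the surviving $\Sigma(P-(P_1+P_2))$ from the first stage contributes the remaining summand. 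Assembling gives (i). Case (iv), $BK(A)\simeq X_{12}\cup X_{13}\cup X_{23}$, is the most delicate: set $Z=X_{12}\cup X_{13}$, so $Z\cap X_{23}=(X_{12}\cap X_{23})\cup(X_{13}\cap X_{23})=X_2\cup X_3$ with $X_2\cap X_3=X_\emptyset$. The first stage gives $H^*(Z)\cong P_{123}\oplus\Sigma(P_1-(P_{12}+P_{13}))$; the pair $(Z,X_{23})$ then has intersection $X_2\cup X_3$, whose cohomology is itself computed by a pushout as $H^*(X_2\cup X_3)\cong P_{23}\oplus\Sigma(P-(P_2+P_3))$, and I must run the Mayer--Vietoris ladder of Lemma \ref{s3l3} one more time with $H^*(Z)\oplus H^*(X_{23})=H^*(Z)\oplus P_{23}$ mapping into $H^*(X_2\cup X_3)$. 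Chasing this final sequence, using that the $\Sigma$-summands land in the image of the respective connecting maps and that $P_{123}\subseteq P_{23}$, produces the $P_{123}$ summand, the $\Sigma(P_1\cap(P_2+P_3)-(P_{12}+P_{13}))$ middle summand, and a double suspension $\Sigma^2(P-(P_1+P_2+P_3))$ coming from the iterated cokernel.

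The main obstacle is the bookkeeping in case (iv): one must correctly identify the map $H^*(Z)\to H^*(X_2\cup X_3)$ on both summands and see that the $\Sigma$-shifted part of $H^*(Z)$ interacts with the $\Sigma$-shifted part of $H^*(X_2\cup X_3)$ so as to yield precisely $\Sigma(P_1\cap(P_2+P_3))$ in the relevant kernel rather than merely $P_{123}=P_1\cap P_2\cap P_3$ — this is where the intersection/sum combinatorics of the invariant subspaces $P_1,P_2,P_3\subseteq P$ genuinely enters, and where commutativity of the ladder in Lemma \ref{s3l3} (which ensures the suspension isomorphisms are compatible) does the essential work. The splitting of all short exact sequences over $\mathbb{Q}$ is what makes the final answer a clean direct sum; I would remark that the same argument gives Theorem \ref{s3t2} verbatim over $\mathbb{F}_p$ for $p>2$, since then every relevant $H^*(BK_J(A);\mathbb{Z})$ is $p$-torsion free (Remark \ref{s2r1}) and Borel's description still holds.
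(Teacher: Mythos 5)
Your proposal follows essentially the same route as the paper: the split short exact sequence of Lemma \ref{s3l1} combined with Borel's identification $H^*(X_J;\mathbb{Q})\cong P_J$, two-stage pasting for the three-piece cases, a degree-parity splitting of the final $j^*$, and naturality of the Mayer--Vietoris sequences to see that the suspended summand of $H^*(X_{12}\cup X_{13})$ maps to the suspended summand of $H^*(X_2\cup X_3)$ by $\Sigma(w+P_{12}+P_{13})\mapsto\Sigma(w+P_2+P_3)$, yielding the kernel $\Sigma(P_1\cap(P_2+P_3)-(P_{12}+P_{13}))$ and the iterated cokernel $\Sigma^2(P-(P_1+P_2+P_3))$, exactly as in the paper's treatment of case (iv). The only cosmetic difference is that the paper carries out this comparison via the map of triads $(X_2\cup X_3;X_2,X_3)\to(X_{12}\cup X_{13};X_{12},X_{13})$ rather than via the ladder of Lemma \ref{s3l3} itself, which is the same naturality argument you invoke.
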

\begin{proof}
We only prove the most complicated case (iv), and the proof of the other cases is similar. Recall from Proposition \ref{s3ht} that
$$BK(A) \simeq X_{12}\cup X_{13}\cup X_{23}.$$

By Lemma \ref{s3l1}, we obtain $$H^{*}(X_{12}\cup X_{13};\,\mathbb{Q})\cong \Sigma \mathrm{coker}\,j^{*-1}_{3}\oplus \ker\,j^{*}_{3},$$
where $$j^{*}_{3}: H^*(X_{12};\,\mathbb{Q})\oplus H^*(X_{13};\,\mathbb{Q}) \stackrel{}\to H^*(X_{1};\,\mathbb{Q}).$$
In addition, we have the continuous maps
$$\rho_{12}: BT\hookrightarrow BK_{12}(A)~,~~\rho_{13}: BT\hookrightarrow BK_{13}(A)~,~{\rm and}~~\rho_{1}: BT\hookrightarrow BK_{1}(A)$$
induced by the corresponding homomorphisms of groups. After homotopy replacement, we still use the same notations to denote the corresponding continuous maps. By Lemma \ref{s3l3}, we have the following commutative diagram of cohomology homomorphisms induced by these continuous maps
$$\xymatrix{
H^*(X_{12};\,\mathbb{Q})\ar[d]_{\rho^*_{12}}\hspace{-0.8cm}&\oplus&\hspace{-0.8cm}H^*(X_{13};\,\mathbb{Q}) \ar[d]_{\rho^*_{13}}\ar[r]^{j^{*}_{3}}& H^*(X_{1};\,\mathbb{Q})\ar[d]^{\rho^*_{1}} \\
P_{12}\hspace{-0.4cm}&\oplus&\hspace{-0.3cm}P_{13}\ar[r] & P_{1}.}$$
From Theorem \ref{s2t1} and the fact that $\{1,\,2\},\,\{1,\,3\}$ and $\{1\}$ correspond to finite parabolic subgroups, we deduce that $\rho^*_{12},~\rho^*_{13}$ and $\rho^*_{1}$ are isomorphisms. In addition, by identifying these cohomologies with the invariants of Weyl groups, we know that
$j^{*}_{3}: P_{12}\oplus P_{13}\stackrel{}\to P_{1}$ is given by $j^{*}_{3}(u,~v)= u-v$, for $u \in P_{12}, v \in P_{13}.$
And we have
$$\ker\,j^{*}_{3}=\{(u,~ v)\in P_{12}\oplus P_{13}|~ u=v\}\cong P_{12}\cap P_{13}= P_{123}~~~{\rm and}~~~\mathrm{coker}\,j^{*}_{3}= P_{1}-(P_{12}+P_{13}).$$
Thus, we obtain
$$H^{*}(X_{12}\cup X_{13};\,\mathbb{Q})\cong \Sigma(P_{1}-(P_{12}+P_{13}))\oplus P_{123}.$$

Now, we compute the cohomology groups of $X_{2}\cup X_{3}$ (which is the intersection of $X_{12}\cup X_{13}$ and $X_{23}$). Similar to the computation of $H^{*}(X_{12}\cup X_{13};\,\mathbb{Q})$ above, we get
$$H^{*}(X_{2}\cup X_{3}; \,\mathbb{Q})\cong \Sigma \mathrm{coker}\,j^{*-1}_{4}\oplus \ker\;j^{*}_{4} \cong \Sigma(P-(P_{2}+P_{3}))\oplus P_{23}.$$

With the same analysis, we obtain
$$H^{*}(BK(A);\,\mathbb{Q})\cong H^{*}((X_{12}\cup X_{13})\cup X_{23};\,\mathbb{Q}) \cong \Sigma\mathrm{coker}\,j_{5}^{*-1} \oplus \ker j_{5}^{*},$$
where $$j_{5}^{*}:H^{*}(X_{12}\cup X_{13};\,\mathbb{Q})\oplus H^{*}(X_{23};\,\mathbb{Q}) \to H^{*}(X_{2}\cup X_{3};\,\mathbb{Q}).$$
And we have the following commutative diagram
$$\xymatrix{H^{*}(X_{12}\cup X_{13};\,\mathbb{Q})\ar[d]_{\cong}\hspace{-1.2cm}&\oplus&\hspace{-0.8cm}H^*(X_{23};\,\mathbb{Q}) \ar[d]_{\rho^*_{23}}^{\cong}\ar[r]^{\hspace{-0.6cm}j^{*}_{5}}& H^{*}(X_{2}\cup X_{3};\,\mathbb{Q})\ar[d]^{\cong} \\
\Sigma(P_{1}-(P_{12}+P_{13}))\oplus P_{123}\hspace{-0.8cm}&\oplus&\hspace{-0.3cm}P_{23}\ar[r] & \Sigma(P-(P_{2}+P_{3}))\oplus P_{23}.}$$
Moreover, by the facts that $$\Sigma(P_{1}-(P_{12}+P_{13}))~~~{\rm and}~~~\Sigma(P-(P_{2}+P_{3}))$$ are the only odd degree parts and $j_{5}^{*}$ preserves degrees, we decompose $j_{5}^{*}$ into two homomorphisms $$j^{*}_{6}: \Sigma(P_{1}-(P_{12}+P_{13}))\to \Sigma(P-(P_{2}+P_{3}))~~~{\rm and}~~~j^{*}_{7}: P_{123}\oplus P_{23}\to P_{23}.$$
Then, note that the continuous map $f: (X_{2}\cup X_{3};~X_{2},~X_{3})\rightarrow (X_{12}\cup X_{13};~X_{12},~X_{13})$ induces the following commutative diagram between two Mayer-Vietoris sequences,
$$\xymatrix{
H^{*-1}( X_{1};\,\mathbb{Q})\ar[d]\ar[r]^{\delta^{*-1}}& H^{*}( X_{12}\cup X_{13};\,\mathbb{Q}) \ar[d]_{f^*} \ar[r]^{\hspace{-0.5cm}i_{1}^*}& H^{*}(X_{12};\,\mathbb{Q})\oplus H^{*}(X_{13};\,\mathbb{Q}) \ar[d] \\
H^{*-1}( X_{\emptyset};\,\mathbb{Q})\ar[r]^{\delta^{*-1}}& H^{*}( X_{2}\cup X_{3};\,\mathbb{Q})\ar[r]^{\hspace{-0.5cm}i_{2}^*}& H^{*}(X_{2};\,\mathbb{Q})\oplus H^{*}(X_{3};\,\mathbb{Q}).}$$
From the right square in the above diagram, it follows that $j^{*}_{7}(u,~v)= u-v$ for any $u \in P_{123}$, $v \in P_{23}$.
Additionally, by the left commutative square, we find that $j^{*}_{6}(\Sigma (w+P_{12}+P_{13}))=\Sigma (w+P_{2}+P_{3})$ for any $w \in P_{1}$. Then, we also decompose the corresponding kernel part and cokernel part as follows,
\begin{align*}
&\Sigma\mathrm{coker}\,j_{5}^{*-1} \oplus \ker\,j_{5}^{*}\\
&\cong \Sigma \mathrm{coker}\,j^{*-1}_{6} \oplus \Sigma \mathrm{coker}\,j^{*-1}_{7} \oplus \ker\;j^{*}_{6} \oplus \ker\,j^{*}_{7} \\
&\cong \Sigma^{2}((P-(P_{2}+P_{3}))-j^{*-1}_{6}(P_{1}-(P_{12}+P_{13})))\oplus \Sigma(P_{1}\cap (P_{2}+P_{3})-(P_{12}+P_{13}))\oplus P_{123}.
\end{align*}

Furthermore, we find that $\Sigma \mathrm{coker}~j^{*-1}_{6}$ can be reduced. From easy observation, we have
$$j^{*-1}_{6}(P_{1}-(P_{12}+P_{13})))= (P_{1}+P_{2}+P_{3})-(P_{2}+P_{3})$$ and
$$(P-(P_{2}+P_{3}))-((P_{1}+P_{2}+P_{3})-(P_{2}+P_{3})) \cong P-(P_{1}+P_{2}+P_{3}).$$
Therefore, we conclude that
$$H^{*}(BK(A);\,\mathbb{Q})\cong \Sigma^{2}(P-(P_{1}+P_{2}+P_{3}))\oplus \Sigma(P_{1}\cap (P_{2}+P_{3})-(P_{12}+P_{13}))\oplus P_{123},$$
which finishes the proof.\end{proof}

\subsection{The mod \texorpdfstring{$p(p>2)$} cohomology groups}

The computations of the rational cohomology and the mod $p$ cohomology are very similar, but the difference is that whether to consider $p$-torsion s in the input of the cohomology groups of the classifying space of those finite parabolic subgroups.  If the cohomology groups of the classifying space of all those finite parabolic subgroups are
$p$-torsion free, then the computation of the mod $p$ case is the same as that of the rational case.
That's the reason why we divide the mod $p$ cohomology groups into two cases: $p>2$ and $p=2$.
Because of low rank, there is only a few cases that we need to consider a $p$-torsion s in the input.

Let $K(A)$ be a simply connected rank 3 Kac-Moody group, then $K(A_J)$ is also simply connected for any $J\subseteq \{1,\,2,\,3\}$, see \cite{ZRW24} for details. Furthermore, according to
\cite[Theorem 5.11]{MT91}, there is a tabulation of $p$-torsion information about the integral cohomology groups of classifying spaces of simply connected compact (simple) Lie groups as follows:
\begin{center}\label{s3ta}
\begin{tabular}{|c|c|c|c|c|c|c|c|c|c|}
\hline Lie groups&$A_{l}~(l\geq 1)$&$B_{l}~(l\geq 3)$&$C_{l}~(l\geq 3)$&$D_{l}~(l\geq 4)$&$E_{6}$&$E_{7}$&$E_{8}$&$F_{4}$&$G_{2}$\\
\hline $p$-torsion& none &2 & none &2 &2,3 &2,3 &2,3,5 &2,3 &2 \\
\hline
\end{tabular}
\end{center}
Thus, for proper parabolic subgroups of rank 3 infinite Kac-Moody groups, there is just one possible case: $G_{2}$ for 2-torsion. Therefore, for the case $p=2$, we have to deal with the rank 2 parabolic subgroup $G_{2}$. For the case $p$ ($p>2$), our method to compute the rational cohomology can also be used for mod $p$ case. The key point is that the proper parabolic subgroups of rank 3 infinite Kac-Moody groups have no a $p$-torsion  for $p>2$, and the details will be revealed in the following.

\begin{Lem}\label{s3l5}
Let $K(A)$ be a rank 3 infinite Kac-Moody group, and $J$ be a proper subset of $\{1,\,2,\,3\}$ such that $A_{J}$ is a finite Cartan matrix. Then for $p>2$,
$$H^{*}(BK_{J}(A);\,\mathbb{F}_{p})\cong H^{*}(BT;\,\mathbb{F}_{p})^{W_{J}(A)}.$$
\end{Lem}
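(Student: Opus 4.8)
The plan is to reduce the statement, via Remark \ref{s2r1}, to a purely group-theoretic fact: the integral cohomology $H^*(BK_J(A);\,\mathbb{Z})$ is $p$-torsion free for every prime $p>2$, whenever $J$ is a proper subset of $\{1,2,3\}$ with $A_J$ a finite Cartan matrix. Once this is established, Remark \ref{s2r1} gives immediately that $\rho^*_{\mathbb{F}_p}: H^*(BK_J(A);\,\mathbb{F}_p)\to H^*(BT;\,\mathbb{F}_p)$ is injective with image $H^*(BT;\,\mathbb{Z})^{W_J(A)}\otimes\mathbb{F}_p\cong H^*(BT;\,\mathbb{F}_p)^{W_J(A)}$, which is exactly the asserted isomorphism.

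The first step is to enumerate the possibilities for $K_J(A)$. Since $J$ is a proper subset of a rank $3$ set, $|J|\le 2$, so $A_J$ is a finite Cartan matrix of rank at most $2$. As recalled in the excerpt, $K_J(A)$ is simply connected (citing \cite{ZRW24}). The simply connected compact Lie groups whose Cartan matrix has rank $\le 2$ are, up to the torus factor coming from the ambient maximal torus $T$ of $K(A)$ having rank $3$: a $3$-torus (when $J=\emptyset$), $SU(2)\times T^2$ type pieces (when $|J|=1$), and for $|J|=2$ the groups built on $A_1\times A_1$, $A_2$, $B_2=C_2$, and $G_2$ (times a central torus of rank $1$). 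I would note that the rank-2 Cartan matrices of finite type are exactly $A_1\oplus A_1$, $A_2$, $B_2$, $G_2$, so this is a genuinely finite list.

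The second step is to quote the torsion table reproduced in the excerpt from \cite[Theorem 5.11]{MT91}: among the simple simply connected compact Lie groups appearing here — those of type $A_1$, $A_1\times A_1$, $A_2$, $B_2=C_2$, and $G_2$ — the only one with any torsion in the integral cohomology of its classifying space is $G_2$, and that torsion is $2$-torsion only. (Classifying spaces of tori have torsion-free integral cohomology, and a product $BK\times BT^k$ has torsion-free integral cohomology iff $BK$ does, by the Künneth theorem, since $H^*(BT^k;\mathbb{Z})$ is free.) Hence for every such $K_J(A)$ and every prime $p>2$, $H^*(BK_J(A);\,\mathbb{Z})$ is $p$-torsion free. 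Combining with Remark \ref{s2r1} finishes the proof.

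The only genuine subtlety — and the step I would write out with the most care — is the reduction from ``$K_J(A)$ is a rank-$3$-torus-padded version of a rank-$\le 2$ simple group'' to the torsion table, namely making sure the central torus factor does not introduce torsion and that $W_J(A)$ acts trivially on it, so that the invariant-and-tensor argument of Remark \ref{s2r1} applies verbatim. This is routine: $H^*(BT;\mathbb{Z})\cong H^*(BK_J(A)\text{-part};\mathbb{Z})\otimes H^*(BT^{3-\mathrm{rk}A_J};\mathbb{Z})$ as $W_J(A)$-modules with trivial action on the second factor, and tensoring a torsion-free module with $\mathbb{F}_p$ commutes with taking $W_J(A)$-invariants. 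No real obstacle remains; the argument is essentially bookkeeping on top of \cite{Bor53}, \cite{MT91}, and \cite{ZRW24}.
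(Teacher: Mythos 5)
Your overall strategy---apply Remark \ref{s2r1} directly to the compact Lie group $K_J(A)$ with its rank-3 maximal torus $T$ and Weyl group $W_J(A)$---would work if you had actually established the hypothesis that $H^{*}(BK_J(A);\,\mathbb{Z})$ is $p$-torsion free for $p>2$, but that is exactly where your argument has a genuine gap. First, you misquote the input: the paper (and \cite{ZRW24}) says that $K(A_J)$ is simply connected, not $K_J(A)$; the Levi $K_J(A)$ contains the full rank-3 torus, so $\pi_1(K_J(A))$ contains $\mathbb{Z}^{3-|J|}$ and it is never simply connected for proper $J$. More seriously, your enumeration tacitly assumes the direct product structure $K_J(A)\cong K(A_J)\times T^{3-|J|}$ (``$SU(2)\times T^2$ type pieces'', ``times a central torus of rank 1''), which is what licenses the K\"unneth step. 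That splitting is not automatic: in general $K_J(A)\cong \bigl(K(A_J)\times T^{3-|J|}\bigr)/F$ for a finite central subgroup $F\subseteq Z(K(A_J))$ whose triviality depends on the entries $a_{ij}$ of $A$ linking $J$ to its complement. For instance, when $A_{12}$ is of type $A_2$ one can choose the remaining entries so that $K_{12}(A)$ is a $U(3)$-like quotient $(SU(3)\times T^{1})/\mathbb{Z}_3$ rather than $SU(3)\times T^{1}$; note that here $|F|=3$, so the subtlety is not confined to the prime $2$. This is precisely why the paper has to invoke \cite[Example 3.2]{ZRW24} in Section \ref{s4} to identify $K_{12}(A)\cong G_2\times T^1$ (forced there only because $G_2$ has trivial center). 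Your final conclusion is in fact true---the possible quotients can be checked case by case, or handled by a transfer argument when $p\nmid|F|$---but as written the decisive structural claim is asserted rather than proved, and it sits exactly where you declared the argument ``routine bookkeeping.''

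The paper's own proof sidesteps the isomorphism type of $K_J(A)$ entirely: from the extension $1\to K(A_J)\to K_J(A)\to T^{3-|J|}\to 1$ it takes the fibration $BK(A_J)\to BK_J(A)\to BT^{3-|J|}$, applies Remark \ref{s2r1} only to the simply connected group $K(A_J)$ (where the torsion table legitimately applies), observes that fibre and base have polynomial mod $p$ cohomology concentrated in even degrees so the Serre spectral sequence collapses, and then identifies $H^{*}(BK(A_J);\,\mathbb{F}_p)\otimes H^{*}(BT^{3-|J|};\,\mathbb{F}_p)$ with $H^{*}(BT;\,\mathbb{F}_p)^{W_J(A)}$. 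To repair your version, either import the classification of Levi types from \cite{ZRW24} and verify $p$-torsion-freeness of $H^{*}(BK_J(A);\,\mathbb{Z})$ for each possible quotient, or replace the K\"unneth step by such a fibration argument. A minor further caveat: your closing claim that $H^{*}(BT;\,\mathbb{Z})$ splits as a $W_J(A)$-module with trivial action on a complementary torus factor is both unnecessary for Remark \ref{s2r1} applied to $K_J(A)$ and not literally true in the given coordinates, since $\sigma_j$ moves $\omega_j$ by $\alpha_j=\sum_i a_{ij}\omega_i$, which involves the weights outside $J$.
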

\begin{proof}
There is a short group exact sequence:
$$\xymatrix@C=0.5cm{
  1 \ar[r] &K(A_{J}) \ar[rr] &&K_{J}(A)\ar[rr] &&T^{3-|J|}  \ar[r] & 1,}$$
which induces a fibration:
$$BK(A_{J}) \rightarrow BK_{J}(A)\rightarrow BT^{3-|J|}.$$
Obviously, $H^{*}(BT^{3-|J|};\,\mathbb{F}_{p})$ is a polynomial algebra. Then from the facts that $A_{J}$ is a finite Cartan matrix and $H^{*}(BK(A_{J});\,\mathbb{Z})$ is $p$-torsion free for $p>2$, and Remark \ref{s2r1}, we deduce that $$H^{*}(BK(A_{J});\,\mathbb{F}_{p})\cong H^{*}(BT^{|J|};\,\mathbb{F}_{p})^{W(A_{J})}.$$ Observing that $W_{J}(A)\cong W(A_{J})$ is a finite group, we conclude that $H^{*}(BT;\,\mathbb{F}_{p})^{W_{J}(A)}$ is also a polynomial algebra by Hilbert's finiteness theorem \cite{Hil93}. Besides, both generators of these two polynomial algebras are in even degree. Thus, the Serre spectral sequence of the fibration collapses at $E^{2}$-term, in other words, $E^{2} \cong E^{\infty}$, which implies that
\begin{align*}
H^{*}(BK_{J}(A);\,\mathbb{F}_{p})&\cong H^{*}(BK(A_{J});\,\mathbb{F}_{p})\otimes H^{*}(BT^{3-|J|};\,\mathbb{F}_{p})\\
&\cong H^{*}(BT^{|J|};\,\mathbb{F}_{p})^{W(A_{J})}\otimes H^{*}(BT^{3-|J|};\,\mathbb{F}_{p}) \cong H^{*}(BT;\,\mathbb{F}_{p})^{W_{J}(A)}.
\end{align*}
This finishes the proof of Lemma \ref{s3l5}.
\end{proof}

\begin{Rem}\label{s3r1}
From the proof of Lemma \ref{s3l5}, it follows that, if $H^{*}(BK(A_{J});\,\mathbb{Z})$ has no $2$-torsion with the same notations as in Lemma \ref{s3l5}, then $H^{*}(BK_{J}(A);\,\mathbb{F}_{2})\cong H^{*}(BT;\,\mathbb{F}_{2})^{W_{J}(A)}$.
\end{Rem}
The following theorem for the mod $p$ case is also similar to Theorem \ref{s3t1}.
\begin{Thm}\label{s3t2}
The mod $p$ ($p>2$) cohomology groups of classifying spaces of rank 3 infinite Kac-Moody groups have the same form as the rational case.
\end{Thm}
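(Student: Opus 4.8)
The plan is to show that the argument used for Theorem \ref{s3t1} goes through verbatim once we replace $\mathbb{Q}$ by $\mathbb{F}_p$, for every prime $p>2$. The only place where rationality was actually used is the identification of the input data: we needed $H^*(BK_J(A);\,\mathbb{Q})\cong H^*(BT;\,\mathbb{Q})^{W_J(A)}$ for each finite-type $J\in\cat P(A)$, together with the fact that the map $\rho^*_J$ realizing this isomorphism is induced by $BT\to BK_J(A)$, so that the naturality squares of Lemma \ref{s3l3} commute. Everything downstream — Lemma \ref{s3l1}, the splitting of the Mayer-Vietoris sequence, the computation of kernels and cokernels of the various $j^*$, and the reduction of the cokernel of $j^*_6$ in case (iv) — is purely homological algebra over a field and is insensitive to which field it is.

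First I would invoke Lemma \ref{s3l5}, which gives exactly the needed input: for $p>2$ and any proper $J\subseteq\{1,2,3\}$ with $A_J$ finite, $H^*(BK_J(A);\,\mathbb{F}_p)\cong H^*(BT;\,\mathbb{F}_p)^{W_J(A)}$, and by Remark \ref{s2r1} (applied inside the proof of Lemma \ref{s3l5}) this isomorphism is $\rho^*_J$, the map induced by the torus inclusion. Then I would note that Lemma \ref{s3l2} holds over any coefficient ring (its proof is coefficient-free), so $\mathrm{im}\,\rho^*\subseteq H^*(BT;\,\mathbb{F}_p)^{W(A)}$, and consequently the $\mathbb{F}_p$-version of the commutative diagram in Lemma \ref{s3l3} is valid. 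With these two facts in hand, the vertical maps $\rho^*_{12},\rho^*_{13},\rho^*_{1}$ (and their analogues for the other cases) are isomorphisms onto $P_{12},P_{13},P_1$ with $\mathbb{F}_p$-coefficients, exactly as in the rational proof.

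Second, I would replay the Mayer-Vietoris computation of Theorem \ref{s3t1} case by case (i)--(iv), observing at each step that the homomorphism $j^*$ is still given by $(u,v)\mapsto u-v$ under the identification with invariants, that its kernel is the intersection of the relevant invariant rings (equal to the invariants of the larger parabolic by the $\mathbb{F}_p$-analogue of the isomorphism $H^*(BT)^{W_{J_1\cup J_2}}\cong H^*(BT)^{W_{J_1}}\cap H^*(BT)^{W_{J_2}}$ used in Lemma \ref{s3l3}), and that its cokernel is the corresponding quotient $P_H-(P_{J_1}+P_{J_2})$. In case (iv) the degree-parity decomposition of $j^*_5$ into $j^*_6$ and $j^*_7$, the computation $j^{*-1}_6(P_1-(P_{12}+P_{13}))=(P_1+P_2+P_3)-(P_2+P_3)$, and the final cokernel reduction all transfer without change, since they only use that we are working over a field and that $\Sigma$-shifted summands sit in odd degrees. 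Hence $H^*(BK(A);\,\mathbb{F}_p)$ has the same four expressions as in Theorem \ref{s3t1}, with $P=H^*(BT;\,\mathbb{F}_p)$.

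The only genuine obstacle is the one that Lemma \ref{s3l5} already disposes of: a priori a finite parabolic $K_J(A)$ could have $p$-torsion in $H^*(BK_J(A);\,\mathbb{Z})$, which would break the identification of the input with invariants of $W_J(A)$ and force a more delicate analysis (as will happen for $p=2$ because of $G_2$). The point of restricting to $p>2$ is precisely that, by the table following Remark \ref{s2r1} together with the classification of the finite-type parabolics of a rank $3$ Kac-Moody group — whose simple factors are of type $A_1$ or of rank $2$, i.e.\ $A_1\times A_1$, $A_2$, $B_2=C_2$, or $G_2$ — the only possible torsion is the $2$-torsion of $G_2$. So for $p>2$ there is no torsion to worry about, Lemma \ref{s3l5} applies to every finite parabolic, and the rational proof carries over word for word. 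I would therefore simply write: ``the proof is identical to that of Theorem \ref{s3t1}, using Lemma \ref{s3l5} in place of Borel's theorem (Theorem \ref{s2t1}) to identify the cohomology of the finite parabolics with the invariants of their Weyl groups.''
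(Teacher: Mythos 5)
Your proposal is correct and follows the same route as the paper: the paper's proof of Theorem \ref{s3t2} is exactly the observation that Lemma \ref{s3l5} supplies the invariant-theoretic input for $p>2$ (the only possible torsion among the finite parabolics being the $2$-torsion of $G_2$), after which the Mayer--Vietoris argument of Theorem \ref{s3t1} carries over verbatim over the field $\mathbb{F}_p$. You have merely written out in detail the steps the paper omits.
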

\begin{proof}
As we can see from Lemma \ref{s3l5}, for $p>2$, the method to compute the rational cohomology is also true for the mod $p$ case. Therefore, we omit the details.
\end{proof}

\section{Mod 2 cohomology groups \label{s4}}

\subsection{The mod 2
cohomology groups}
For a rank 3 infinite Kac-Moody group $K(A)$ with the maximal torus subgroup $T$, the cohomologies of the classifying spaces of its finite parabolic subgroups are the input for the computation of $ H^{*}(BK(A))$. In particular, for rational and mod $p$ ($p>2$) cases, this input can be computed by Theorem \ref{s2t1} and Lemma \ref{s3l5}. Since $G_2$ may appear as a parabolic subgroup of $K(A)$, and we have no similar results to Theorem \ref{s2t1} and Lemma \ref{s3l5} to compute the mod 2 cohomology groups of its classifying space, this raises a problem for the mod 2 case. Without loss of generality, we suppose the index of this parabolic subgroup is \{1,\,2\} and denote $K_{12}(A)$ the parabolic subgroup in $K(A)$. The isomorphism type of $K_{12}(A)$ has been determined in \cite[Example 3.2]{ZRW24}. Furthermore, $$K_{12}(A)\cong K(A_{12})\times T^{1}\cong G_{2}\times T^{1}.$$
Then, applying the K$\ddot{\rm u}$nneth formula, we conclude that
\begin{equation}\label{s4f1}
H^*(B(G_{2}\times T^{1});\,\mathbb{F}_{2})\cong H^*(BG_{2};\,\mathbb{F}_{2})\otimes H^*(BT^{1};\,\mathbb{F}_{2}).
\end{equation}
In fact, the key for our computation is the analysis of the homomorphism $$\rho^*_{12} : H^{*}(B(G_{2}\times T^{1});\,\mathbb{F}_{2})\rightarrow H^{*}(BT;\,\mathbb{F}_{2})$$ induced by the inclusion $\rho_{12}: BT=B(T^2\times T^{1})\hookrightarrow B(G_{2}\times T^{1})$, and $\rho^*_{12}$ could be reduced to the homomorphism $p^*_{\mathbb{F}_{2}} : H^{*}(BG_{2};\,\mathbb{F}_{2})\rightarrow H^{*}(BT^2;\,\mathbb{F}_{2})$ induced by the inclusion $p: T^2 \hookrightarrow G_{2}$. Moreover, we may choose a subroot system of $G_{2}$ that contains all long roots which give rise to a subgroup $\textit{SU(3)}$. Thus, $p^*_{\mathbb{F}_{2}}$ can be viewed as the composite homomorphism
\begin{align}\label{s4fo1}
H^{*}(BG_{2};\,\mathbb{F}_{2})\stackrel{p^{*}_1}\to H^{*}(\textit{BSU(3)};\,\mathbb{F}_{2}) \stackrel{p^{*}_2}\to H^{*}(BT^2;\,\mathbb{F}_{2})
\end{align}
induced by the inclusions of groups. Since $H^{*}(B\textit{SU(3)};\,\mathbb{Z})$ is 2-torsion free, combined with Remark \ref{s2r1}, we deduce that $p^{*}_2$ is injective and $$H^{*}(\textit{BSU(3)};\,\mathbb{F}_{2})\cong H^{*}(BT^2;\,\mathbb{F}_{2})^{\textit{W(SU(3))}},$$
namely, \begin{align}\label{s4fo2}
\ker\;p^{*}_2 =0~{\rm and}~\mathrm{im}\,p^{*}_2\cong H^{*}(BT^2;\,\mathbb{F}_{2})^{\textit{W(SU(3))}}.
\end{align}
Furthermore, by the fact that Cartan matrices of $G_{2}$ and $\textit{SU(3)}$ are the same after mod 2, we obtain
\begin{align}\label{s4fo3}
H^{*}(BT^2;\,\mathbb{F}_{2})^{\textit{W(SU(3))}}\cong H^{*}(BT^2;\,\mathbb{F}_{2})^{W(G_{2})}.
\end{align}
Now, we deal with $\ker p^{*}_{\mathbb{F}_{2}}$.
By \cite[Chapter 3. Theorem 3.17(2) and Chapter 7. Corollary 6.3]{MT91}, we know that
\begin{align*}
H^{*}(\textit{BSU(3)};\,\mathbb{F}_{2})= \mathbb{F}_{2}[c_{2},~c_{3}],~\deg c_{2}=4,~\deg c_{3}=6,~\text{and}
\end{align*}
\begin{align}
H^{*}(BG_{2};\,\mathbb{F}_{2})=\mathbb{F}_{2}[y_{4},~y_{6},~y_{7}],~\deg y_{4}= 4,~y_{6}= {\rm Sq}^{2}y_{4},~y_{7}= {\rm Sq}^{3}y_{4}= {\rm Sq}^{1}y_{6}\label{s4f2}.
\end{align}
Moreover, in \ref{s4fo1}, we find that $p^{*}_1$ is surjective by our construction on the root system. Additionally, from the fact that it preserves degrees, we have $p^{*}_{1}(y_{4})= c_{2}$, $p^{*}_{1}(y_{6})= c_{3} $, and $p^{*}_{1}(y_{7})= 0$. This, combined with \ref{s4fo2} and \ref{s4fo3}, implies that $\mathrm{im} \,p^{*}_{\mathbb{F}_{2}}$ is the invariant $H^{*}(BT^2;\,\mathbb{F}_{2})^{W(G_{2})}$ and $y_{7}\in \ker\;p^{*}_{\mathbb{F}_{2}}$. Therefore, we conclude that $\mathrm{im}\,p^{*}_{\mathbb{F}_{2}}\cong H^{*}(BT^2;\,\mathbb{F}_{2})^{W(G_{2})}$, and $\ker\,p^{*}_{\mathbb{F}_{2}}$ is the ideal $(y_{7})$ in $H^{*}(BG_{2};\,\mathbb{F}_{2})$.

We have understood the behavior of $p^*_{\mathbb{F}_{2}}$ for $G_{2}$. Then, we have to understand the behavior of $\rho^*_{12}$ for $K_{12}(A)$. Previously, we have constructed a $\textit{SU(3)}$ as its subgroup by the root system. Similarly, for $K_{12}(A)$, we construct a subgroup $K_{12}'(A)$ that corresponds to $\textit{SU(3)}$ with the Weyl group $W'_{12}(A)$. From the fact that $H^{*}(\textit{BSU(3)};\,\mathbb{Z})$ is 2-torsion free, combined with Remark \ref{s3r1}, it follows that
$$H^{*}(BK_{12}'(A);\,\mathbb{F}_{2})\cong H^{*}(BT;\,\mathbb{F}_{2})^{W'_{12}(A)} .$$
Moreover, by easy observation, we have
\begin{equation}\label{s4f3}
H^{*}(BT;\,\mathbb{F}_{2})^{W'_{12}(A)} \cong H^{*}(BT^2;\,\mathbb{F}_{2})^{\textit{W(SU(3))}}\otimes H^*(BT^{1};\,\mathbb{F}_{2}).
\end{equation}
Since $\rho_{12}$ factors through $K_{12}'(A)$, $\rho_{12}^*$ can be viewed as the composite homomorphism
$$\hspace{-1cm}\xymatrix{
H^{*}(BK_{12}(A);\,\mathbb{Z}_{2}) \ar[d]_{\cong} \ar[r]^{p^{*}_1\otimes {\rm id}} & H^{*}(BK_{12}'(A);\,\mathbb{F}_{2}) \ar[d]_{\cong}\ar[r]^{p^{*}_2\otimes {\rm id}} & H^{*}(BT;\,\mathbb{F}_{2})\ar[d]_{\cong}\\
H^*(BG_{2};\,\mathbb{F}_{2})\otimes H^*(BT^{1};\,\mathbb{F}_{2})\ar[d]_{\cong} & H^{*}(BT^2;\,\mathbb{F}_{2})^{\textit{W(SU(3))}}\otimes H^*(BT^{1};\,\mathbb{F}_{2})\ar[d]_{\cong} &H^{*}(BT^2;\,\mathbb{F}_{2})\otimes H^{*}(BT^1;\,\mathbb{F}_{2})\ar[d]_{\cong}\\
\mathbb{F}_{2}[y_{4},~y_{6},~y_{7},~\omega_{3}] &\mathbb{F}_{2}[c_{2},~c_{3},~\omega_{3}] & \mathbb{F}_{2}[\omega_{1},~\omega_{2},~\omega_{3}].}$$

We summarize the above argument into the following lemma.
\begin{Lem}\label{s4l1}
Let $K(A)$ be a rank 3 infinite Kac-Moody group. If $G_2$ occurs as a parabolic subgroup of $K(A)$ indexed by $\{1,~2\}$, then $\ker\,\rho_{12}^*$ is the ideal $(y_{7})$ in $\mathbb{F}_{2}[y_{4},~y_{6},~y_{7},~\omega_{3}]$, and $\mathrm{im}\,\rho_{12}^*\cong H^{*}(BT;\,\mathbb{F}_{2})^{W_{12}(A)}$.
\end{Lem}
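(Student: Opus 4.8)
The plan is to assemble the lemma from the three ingredients already worked out in the discussion preceding it, organizing them through the big commutative diagram. First I would record the structural facts: by \cite[Example 3.2]{ZRW24} we have $K_{12}(A)\cong G_2\times T^1$, so by the K\"unneth formula (Formula \ref{s4f1}) $H^*(BK_{12}(A);\,\mathbb{F}_2)\cong \mathbb{F}_2[y_4,y_6,y_7]\otimes\mathbb{F}_2[\omega_3]=\mathbb{F}_2[y_4,y_6,y_7,\omega_3]$, which identifies the source of $\rho_{12}^*$ with the stated polynomial ring. Likewise, $\rho_{12}$ splits as $\rho_{12}=p\times\mathrm{id}_{T^1}$ under $BT=B(T^2\times T^1)$, so $\rho_{12}^*=p^*_{\mathbb{F}_2}\otimes\mathrm{id}$ on cohomology, where $p^*_{\mathbb{F}_2}\colon H^*(BG_2;\mathbb{F}_2)\to H^*(BT^2;\mathbb{F}_2)$ is the map analyzed in \ref{s4fo1}--\ref{s4fo3}.

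Next I would transcribe the conclusions about $p^*_{\mathbb{F}_2}$ that were already established: factoring $p^*_{\mathbb{F}_2}=p^*_2\circ p^*_1$ through $H^*(BSU(3);\mathbb{F}_2)=\mathbb{F}_2[c_2,c_3]$, we have $p^*_1$ surjective with $p^*_1(y_4)=c_2$, $p^*_1(y_6)=c_3$, $p^*_1(y_7)=0$, and $p^*_2$ injective with image $H^*(BT^2;\mathbb{F}_2)^{W(SU(3))}=H^*(BT^2;\mathbb{F}_2)^{W(G_2)}$ (using \ref{s4fo2}, \ref{s4fo3}). Hence $\ker p^*_{\mathbb{F}_2}=\ker p^*_1=(y_7)$ as an ideal of $\mathbb{F}_2[y_4,y_6,y_7]$ — here one checks $\ker p^*_1=(y_7)$ because $p^*_1$ is a surjection of polynomial rings sending $y_4,y_6$ to the polynomial generators $c_2,c_3$ and $y_7$ to $0$, so its kernel is exactly the ideal generated by $y_7$ — and $\mathrm{im}\,p^*_{\mathbb{F}_2}=\mathrm{im}\,p^*_2\cong H^*(BT^2;\mathbb{F}_2)^{W(G_2)}$.

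Then I would tensor with $H^*(BT^1;\mathbb{F}_2)=\mathbb{F}_2[\omega_3]$, which is free (hence flat) over $\mathbb{F}_2$, so tensoring commutes with kernels and images: $\ker\rho_{12}^*=\ker(p^*_{\mathbb{F}_2}\otimes\mathrm{id})=(\ker p^*_{\mathbb{F}_2})\otimes\mathbb{F}_2[\omega_3]=(y_7)$ inside $\mathbb{F}_2[y_4,y_6,y_7,\omega_3]$, and
$$\mathrm{im}\,\rho_{12}^*=(\mathrm{im}\,p^*_{\mathbb{F}_2})\otimes H^*(BT^1;\mathbb{F}_2)\cong H^*(BT^2;\mathbb{F}_2)^{W(G_2)}\otimes H^*(BT^1;\mathbb{F}_2).$$
To finish, I identify this last tensor product with $H^*(BT;\mathbb{F}_2)^{W_{12}(A)}$. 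This uses Formula \ref{s4f3} together with the identification of $H^*(BK_{12}'(A);\mathbb{F}_2)$ with $H^*(BT;\mathbb{F}_2)^{W'_{12}(A)}$ (valid by Remark \ref{s3r1} since $H^*(BSU(3);\mathbb{Z})$ is $2$-torsion free), plus the observation that $W_{12}(A)$ and $W'_{12}(A)$ act with the same invariants on $H^*(BT;\mathbb{F}_2)$ because the Cartan matrices of $G_2$ and $SU(3)$ agree mod $2$ (the reflection formula $\sigma_j(\omega_i)$ depends on $A$ only through $a_{ij}\bmod 2$ in characteristic $2$). Assembling the diagram shows $\rho_{12}^*$ is the claimed composite and the two assertions follow.

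The routine parts are the flatness/base-change bookkeeping; the only place demanding genuine care is the claim that $\ker p^*_1$ is \emph{exactly} the ideal $(y_7)$ rather than merely containing it. The cleanest argument: $H^*(BG_2;\mathbb{F}_2)$ is a polynomial algebra on $y_4,y_6,y_7$ and $H^*(BSU(3);\mathbb{F}_2)$ a polynomial algebra on $c_2,c_3$; the graded surjection $p^*_1$ sends $y_4\mapsto c_2$, $y_6\mapsto c_3$, $y_7\mapsto 0$, so modulo the ideal $(y_7)$ it induces an isomorphism $\mathbb{F}_2[y_4,y_6]\xrightarrow{\sim}\mathbb{F}_2[c_2,c_3]$ (it is a surjection of polynomial rings that is a degreewise bijection on generators, hence injective by a dimension count in each degree). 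Therefore $\ker p^*_1=(y_7)$ exactly, which is the crux of the $\ker\rho_{12}^*$ statement.
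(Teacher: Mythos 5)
Your proposal is correct and follows essentially the same route as the paper: the K\"unneth splitting $K_{12}(A)\cong G_2\times T^1$, factoring $p^*_{\mathbb{F}_2}$ through $H^{*}(BSU(3);\,\mathbb{F}_{2})$ via the long-root subgroup, the mod 2 agreement of the Cartan matrices of $G_2$ and $SU(3)$, and Remark \ref{s3r1} with Formula \ref{s4f3} to identify the image with $H^{*}(BT;\,\mathbb{F}_{2})^{W_{12}(A)}$. Your dimension-count argument that $\ker p^*_1$ is exactly the ideal $(y_7)$, rather than merely containing it, is a welcome explicit justification of a step the paper only asserts, but it is the same underlying argument.
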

Due to symmetry of indices, if $G_2$ occurs as a parabolic subgroup of $K(A)$ indexed by $\{1,~3\}$ or $\{2,~3\}$, then $\ker\,\rho_{13}^*$ or $\ker\,\rho_{23}^*$ is the ideal $(y'_{7})$ in $\mathbb{F}_{2}[y'_{4},~y'_{6},~y'_{7},~\omega_{2}]$ or $(y''_{7})$ in $\mathbb{F}_{2}[y''_{4},~y''_{6},~y''_{7},~\omega_{1}]$.

Based on how $G_{2}$ appears as a rank 2 finite parabolic subgroup in $P(A)$, we give a refined classification of rank 3 infinite Kac-Moody groups.
\begin{Prop}\label{M2C}
Up to symmetry, there are ten classes as follows:
\begin{enumerate}
\item[{\rm (i)}] $\{1\},~\{2\},~\{3\};$
\item[{\rm (ii)}] $\{1, 2\},~\{3\},~K(A_{12})\ncong G_{2};$
\item[{\rm (iii)}]$\{1, 2\},~\{3\},~K(A_{12})\cong G_{2};$
\item[{\rm (iv)}] $\{1, 2\},~\{1, 3\},~K(A_{12})\ncong G_{2},~K(A_{13})\ncong G_{2};$
\item[{\rm (v)}] $\{1, 2\},~\{1, 3\},~K(A_{12})\cong G_{2},~K(A_{13})\ncong G_{2};$
\item[{\rm (vi)}] $\{1, 2\},~\{1, 3\},~K(A_{12})\cong K(A_{13})\cong G_{2};$
\item[{\rm (vii)}] $\{1, 2\},~\{1, 3\},~\{2, 3\},~K(A_{12})\ncong G_{2},~K(A_{13})\ncong G_{2},~K(A_{23})\ncong G_{2};$
\item[{\rm (viii)}]$\{1, 2\},~\{1, 3\},~\{2, 3\},~K(A_{12})\cong G_{2},~K(A_{13})\ncong G_{2},~K(A_{23})\ncong G_{2};$
\item[{\rm (ix)}]$\{1, 2\},~\{1, 3\},~\{2, 3\},~K(A_{12})\cong K(A_{13})\cong G_{2},~K(A_{23})\ncong G_{2};$
\item[{\rm (x)}] $\{1, 2\},~\{1, 3\},~\{2, 3\},~K(A_{12})\cong K(A_{13})\cong K(A_{23})\cong G_{2}.$
\end{enumerate}
\end{Prop}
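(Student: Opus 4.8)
The plan is to obtain Proposition~\ref{M2C} as a refinement of the four-type classification of rank~$3$ infinite Kac-Moody groups recorded just before Proposition~\ref{s3ht}. First I would recall from that classification that, up to permuting the index set $\{1,2,3\}$, the simplicial complex $P(A)$ is one of the four complexes (i)--(iv) pictured there, and that the rank~$2$ finite parabolic subgroups of $K(A)$ are exactly those indexed by the $1$-dimensional elements of $P(A)$ — each of which is automatically a maximal simplex, since the only parabolic properly containing a rank~$2$ one is $K(A)$ itself, which is infinite. Thus $K(A)$ has no rank~$2$ finite parabolic in type (i); one, which we may take to be indexed by $\{1,2\}$, in type (ii); two, necessarily sharing a vertex and which we may take to be $\{1,2\}$ and $\{1,3\}$, in type (iii); and all three, $\{1,2\},\{1,3\},\{2,3\}$, in type (iv).

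Next I would isolate the invariant along which to refine. A $2\times 2$ principal submatrix $A_J$ is of finite type precisely when $a_{ij}a_{ji}<4$, so, $K(A)$ being simply connected, $K(A_J)$ is one of $SU(2)\times SU(2)$, $A_2=SU(3)$, $B_2=Sp(2)$, $G_2$ according as $a_{ij}a_{ji}=0,1,2,3$. The classifying spaces of $SU(2)\times SU(2)$, $SU(3)$ and $Sp(2)$ have torsion-free integral cohomology, whereas $H^{*}(BG_2;\mathbb{Z})$ has $2$-torsion by the table from \cite[Theorem 5.11]{MT91} quoted above; hence $G_2$ is the unique rank $\le 2$ simple type for which $\rho^{*}_{\mathbb{F}_2}$ is not governed by Borel's theorem via Remark~\ref{s2r1}/\ref{s3r1}. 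Since the homotopy decomposition of Theorem~\ref{s2t2} and Proposition~\ref{s3ht} reduce $H^{*}(BK(A);\mathbb{F}_2)$ to a Mayer--Vietoris computation whose only inputs are the mod~$2$ cohomologies of the classifying spaces of the finite parabolics, the only datum beyond the homotopy type $P(A)$ that affects this computation is, for each $1$-dimensional simplex $J=\{i,j\}$ of $P(A)$, whether $K(A_J)\cong G_2$ (equivalently, whether $a_{ij}a_{ji}=3$); in particular the types $A_2$ and $B_2$ are indistinguishable here. So the correct refinement is obtained by marking each edge of $P(A)$ according to whether its parabolic is $G_2$, two such marked complexes being regarded as equivalent exactly when some permutation of $\{1,2,3\}$ carries one to the other with markings — the evident $G_2$-refinement of the equivalence relation $\sigma P(A)=P(B)$ recalled in Section~\ref{s2}.

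The count is then an elementary orbit enumeration modulo the automorphism group of each complex. Type (i) has no edge, giving class (i); type (ii) has one edge, fixed by its automorphism group $\langle(1\,2)\rangle$, so marked or unmarked gives classes (ii),(iii); type (iii) is the path $2\!-\!1\!-\!3$ with automorphism group $\langle(2\,3)\rangle$, so the marking orbits are $\emptyset$, a single edge, and both edges, giving (iv),(v),(vi); type (iv) is the boundary of the triangle $\{1,2,3\}$ with automorphism group $S_3$ acting transitively on edges and on pairs of edges, so the marking orbits are indexed by the number of marked edges $0,1,2,3$, giving (vii),(viii),(ix),(x). Hence $1+2+3+4=10$ classes, and they are pairwise inequivalent because the pair (number of edges of $P(A)$, number of $G_2$-marked edges) is an invariant of the equivalence class and within each complex the orbit enumeration above is exact. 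That none of the ten classes is vacuous is checked by exhibiting, for each configuration, a rank~$3$ Cartan matrix realizing it: take $a_{ij}a_{ji}=3$ on the $G_2$-edges, $a_{ij}a_{ji}\in\{1,2\}$ on the other finite edges, and $a_{ij}a_{ji}\ge 4$ on the remaining pairs, and verify via the sign of $\det A$ (or the Vinberg--Kac criterion) that the $3\times3$ matrix is of indefinite type — done exactly as for the unrefined four-type list, the only point needing a little care being type (iv) and its refinements, where all three $2\times2$ submatrices are finite and one must choose entries (e.g.\ $a_{ij}a_{ji}=2$ on the non-$G_2$ edges) so that $A$ itself stays indefinite rather than affine. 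The single genuinely nontrivial ingredient in all of this is the input fact that $G_2$ is the only rank $\le 2$ compact simple Lie group with $2$-torsion in the cohomology of its classifying space; the rest is the reorganization of the four-type list and the $S_3$-orbit count on the indices, and I do not expect any real obstacle there.
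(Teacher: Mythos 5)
Your proposal is correct and follows essentially the same route as the paper, which states Proposition~\ref{M2C} without a formal proof as the evident refinement of the four-type classification by marking which edges of $P(A)$ carry a $G_2$ parabolic and enumerating orbits under permutations of $\{1,2,3\}$; your orbit count ($1+2+3+4=10$), the invariance of the pair (number of edges, number of $G_2$-edges), and the realizability check via explicit Cartan matrices simply supply the details the paper leaves implicit. The only minor over-restriction is your insistence that the realizing matrix in the triangle cases be indefinite rather than affine: since ``infinite type'' includes affine, an affine realization (e.g.\ $\widetilde A_2$ for class (vii)) would also be acceptable, so this extra care is harmless but unnecessary.
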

Now, we compute the cohomology groups of rank 3 infinite Kac-Moody groups for the mod 2 case. The method to compute the mod 2 cohomology groups is similar to the rational case whenever $G_{2}$ appears in $P(A)$, and the cohomology groups are always split into two parts by the Mayer-Vietoris sequence: the kernel part and cokernel part. If $G_{2}$ appears, then the cohomology groups will have an extra part of the kernel.
\begin{Thm}\label{s4t1}
Denote $P_{J}$ the invariant $H^{*}(BT;\mathbb{F}_{2})^{W_{J}(A)}$ and $I_{3},~I_{2},$ and $I_{1}$ the ideals $(y_{7})$ in $\mathbb{F}_{2}[y_{4},~y_{6},~y_{7},~\omega_3]$, $(y'_{7})$ in $\mathbb{F}_{2}[y'_{4},~y'_{6},~y'_{7},~\omega_2]$, and $(y''_{7})$ in $\mathbb{F}_{2}[y''_{4},~y''_{6},~y''_{7},~\omega_1]$, respectively, while the other notations are the same as in Theorem \ref{s3t1}. By the above classification, the mod 2 cohomology groups of classifying spaces of rank 3 infinite Kac-Moody groups can be represented as follows:
\begin{enumerate}
\item[{\rm (i)}] $\Sigma(P-(P_{1}+P_{2}))\oplus \Sigma(P-(P_{12}+P_{3}))\oplus P_{123};$
\item[{\rm (ii)}] $\Sigma(P-(P_{12}+P_{3}))\oplus P_{123};$
\item[{\rm (iii)}]$\Sigma(P-(P_{12}+P_{3}))\oplus P_{123}\oplus I_{3};$
\item[{\rm (iv)}] $\Sigma(P_{1}-(P_{12}+P_{13}))\oplus P_{123};$
\item[{\rm (v)}] $\Sigma(P_{1}-(P_{12}+P_{13}))\oplus P_{123}\oplus I_{3};$
\item[{\rm (vi)}] $\Sigma(P_{1}-(P_{12}+P_{13}))\oplus P_{123}\oplus I_{3}\oplus I_{2};$
\item[{\rm (vii)}] $\Sigma^{2}(P-(P_{1}+P_{2}+P_{3}))\oplus \Sigma(P_{1}\cap (P_{2}+P_{3})-(P_{12}+P_{13}))\oplus P_{123};$
\item[{\rm (viii)}]$\Sigma^{2}(P-(P_{1}+P_{2}+P_{3}))\oplus \Sigma(P_{1}\cap (P_{2}+P_{3})-(P_{12}+P_{13}))\oplus P_{123}\oplus I_{3};$
\item[{\rm (ix)}]$\Sigma^{2}(P-(P_{1}+P_{2}+P_{3}))\oplus \Sigma(P_{1}\cap (P_{2}+P_{3})-(P_{12}+P_{13}))\oplus P_{123}\oplus I_{3}\oplus I_{2};$
\item[{\rm (x)}]$\Sigma^{2}(P-(P_{1}+P_{2}+P_{3}))\oplus \Sigma(P_{1}\cap (P_{2}+P_{3})-(P_{12}+P_{13}))\oplus P_{123}\oplus I_{3}\oplus I_{2}\oplus I_{1}.$
\end{enumerate}
\end{Thm}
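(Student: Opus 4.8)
The plan is to run the same Mayer--Vietoris bookkeeping as in the proofs of Theorem \ref{s3t1} and Theorem \ref{s3t2}, but now with mod $2$ coefficients and with the input cohomology groups of the finite parabolic subgroups corrected whenever $G_2$ appears. First I would observe that for any proper $J\subseteq\{1,2,3\}$ with $A_J$ finite and $K(A_J)\ncong G_2$, the integral cohomology $H^*(BK(A_J);\mathbb{Z})$ is $2$-torsion free (by the table following Lemma \ref{s3l5}, the only simple Lie group of rank $\le 2$ with $2$-torsion is $G_2$), so by Remark \ref{s3r1} we have $H^*(BK_J(A);\mathbb{F}_2)\cong H^*(BT;\mathbb{F}_2)^{W_J(A)}=P_J$. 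Hence in cases {\rm (i)}, {\rm (ii)}, {\rm (iv)}, {\rm (vii)} \emph{every} finite parabolic has $2$-torsion-free input, so the entire argument of Theorem \ref{s3t1} goes through verbatim with $\mathbb{Q}$ replaced by $\mathbb{F}_2$; this immediately yields the stated formulas for those four cases and reduces the work to the six cases where $G_2$ occurs.

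For the cases with $G_2$ present, the key tool is Lemma \ref{s4l1} (and its index-symmetric variants): if $G_2$ is the parabolic indexed by $\{1,2\}$, then $\rho_{12}^*: H^*(BK_{12}(A);\mathbb{F}_2)\to H^*(BT;\mathbb{F}_2)$ has $\mathrm{im}\,\rho_{12}^*\cong P_{12}$ and $\ker\rho_{12}^*=I_3$, the ideal $(y_7)\subset \mathbb{F}_2[y_4,y_6,y_7,\omega_3]$. I would then redo the Mayer--Vietoris computations, taking care of one subtlety: the map $j^*$ in Lemma \ref{s3l1} now involves the restriction maps $X_J\cap X_{J'}\to X_J$, which on cohomology factor through $H^*(BK_J(A);\mathbb{F}_2)$ and through $\rho_J^*$ onto the invariant subring $P_J\subseteq P$. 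Concretely, take case {\rm (iii)}, $P(A)=\{\{1,2\},\{3\}\}$ with $K(A_{12})\cong G_2$: here $BK(A)\simeq X_{12}\cup X_3$ and $X_{12}\cap X_3=X_\emptyset$, so Lemma \ref{s3l1} gives $H^*(BK(A);\mathbb{F}_2)\cong \Sigma\,\mathrm{coker}\,j^{*-1}\oplus\ker j^*$ with $j^*: H^*(X_{12};\mathbb{F}_2)\oplus H^*(X_3;\mathbb{F}_2)\to H^*(X_\emptyset;\mathbb{F}_2)=P$, $j^*(u,v)=\rho_{12}^*(u)-\rho_3^*(v)$. Since $\rho_3^*$ is an isomorphism onto $P_3=P$ and $\mathrm{im}\,\rho_{12}^*=P_{12}$, the cokernel of $j^*$ is $P-(P_{12}+P_3)=P-(P_{12}+P)$... but $P_3=P$ forces the image of $j^*$ to be all of $P$, so $\mathrm{coker}\,j^*=0$; wait --- more carefully, $\mathrm{coker}\,j^* \cong P/(P_{12}+P)=0$, and $\ker j^* = \{(u,v): \rho_{12}^*(u)=\rho_3^*(v)\}\cong \ker\rho_{12}^*\oplus P_{12}\cong I_3\oplus P_{12}$. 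Comparing with the torsion-free case {\rm (ii)}, where $\ker j^*\cong P_{123}=P_{12}\cap P_3$ and $\mathrm{coker}\,j^*\cong\Sigma(P-(P_{12}+P_3))$... I realize the bookkeeping must track $P_3$ versus $P$ honestly: in case {\rm (ii)} one genuinely has $H^*(X_3)\cong P_3$ but $P_3$ need not equal $P$ --- rather the point is that in $P(A)=\{\{12\},\{3\}\}$ one uses $X_3$ with $W_3$ of rank one, and the formula $\Sigma(P-(P_{12}+P_3))\oplus P_{123}$ records exactly $\mathrm{coker}\,j^*$ and $\ker j^*$; replacing the input $H^*(X_{12};\mathbb{F}_2)\cong P_{12}$ by the group $P_{12}\oplus I_3$ (with $\rho_{12}^*$ killing $I_3$) changes $\ker j^*$ by an extra summand $I_3$ and leaves $\mathrm{coker}\,j^*$ unchanged, giving {\rm (iii)}. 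The analogous substitution in cases {\rm (v)}, {\rm (vi)} (built on {\rm (iv)}) and {\rm (viii)}, {\rm (ix)}, {\rm (x)} (built on {\rm (vii)}) adds $I_3$, $I_3\oplus I_2$, or $I_3\oplus I_2\oplus I_1$ to the kernel part of the top-level Mayer--Vietoris splitting, one summand for each $G_2$ that occurs.

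The main obstacle I expect is the second and third Mayer--Vietoris steps in cases {\rm (vi)}, {\rm (viii)}, {\rm (ix)}, {\rm (x)}, where two or three of the $G_2$-corrected pieces are glued: one must verify that the extra kernel classes $y_7, y_7', y_7''$ (which live in \emph{distinct} polynomial subalgebras attached to the three edges) remain in the kernel of \emph{all} the successive maps $j^*$ and are never hit by a connecting homomorphism $\delta$, so that they survive as a genuine direct summand $I_3\oplus I_2\oplus I_1$ rather than getting entangled. This reduces to the observation that each $y_7$-type ideal restricts to zero in $H^*(BT;\mathbb{F}_2)$ and hence in every lower stratum $X_H$ with $H\subsetneq J$, so it lies in the kernel of every restriction map appearing in the complex; combined with the degree/parity argument already used in the proof of Theorem \ref{s3t1} to split $j_5^*$, this forces the splitting. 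Once this is checked, each of the ten formulas follows by reading off $\ker j^*$ and $\mathrm{coker}\,j^*$ at the final stage exactly as in Theorem \ref{s3t1}.
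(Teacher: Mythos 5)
Your overall strategy is the same as the paper's: rerun the Mayer--Vietoris bookkeeping of Theorem \ref{s3t1} with $\mathbb{F}_2$-coefficients, use Remark \ref{s3r1} for the $2$-torsion-free parabolics and Lemma \ref{s4l1} for each $G_2$, and record each occurrence of $G_2$ as an extra ideal summand in the kernel part. For cases {\rm (i)}, {\rm (ii)}, {\rm (iv)}, {\rm (vii)}, and for the single-gluing $G_2$ cases {\rm (iii)}, {\rm (v)}, {\rm (vi)}, your argument is correct and agrees with the paper (after your self-correction about $P_3$ versus $P$): in those cases every intersection occurring in Lemma \ref{s3l1} is a single stratum $X_\emptyset$ or $X_1$, whose mod $2$ cohomology injects into $H^{*}(BT;\mathbb{F}_2)$, so the $y_7$-type classes do die under the relevant restriction and simply enlarge $\ker j^{*}$ by $I_3$ (resp.\ $I_3\oplus I_2$).

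The gap is exactly at the point you single out as the main obstacle, namely the three-patch cases {\rm (viii)}--{\rm (x)}, and your proposed resolution does not close it. There the final Mayer--Vietoris step has intersection $X_2\cup X_3$, and the inference ``the $y_7$-classes restrict to zero on $BT$ and on every lower stratum, hence lie in the kernel of every restriction map of the complex, and parity finishes the splitting'' fails: an odd-degree element of $H^{*}(X_2\cup X_3;\mathbb{F}_2)$ is not detected by its restrictions to $X_2$ and $X_3$, because $H^{\mathrm{odd}}(X_2\cup X_3;\mathbb{F}_2)\cong\Sigma(P-(P_2+P_3))$ is exactly the image of the connecting homomorphism and is killed by those restrictions, and $y''_7$ is itself of odd degree, so the parity argument cannot exclude its image landing there. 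In fact the claimed vanishing is false in cases {\rm (ix)}, {\rm (x)}: the double mapping cylinder $X_2\cup X_3$ is the Borel construction of the unit sphere of the adjoint representation of $G_2$ (times $BT^1$), so the Gysin sequence shows that the restriction $H^{7}(X_{23};\mathbb{F}_2)\rightarrow H^{7}(X_2\cup X_3;\mathbb{F}_2)$ is injective, hence $y''_7$ does not restrict to zero; the same issue arises in case {\rm (viii)} for the lifts of $y_7$ through the first gluing. So for {\rm (viii)}--{\rm (x)} you must genuinely track where these classes land in $\Sigma(P-(P_2+P_3))$ and redo the kernel/cokernel count, or argue as the paper does at this step, where the containment $(y''_7)\subseteq\ker j_5^{*}$ is asserted and supplemented by the observation that the final answer is independent of the pasting order; your substitute justification for that step is not valid as written.
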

\begin{proof}
The proof of this theorem is similar to Theorem \ref{s3t1}. We only show the computation of case (x) with details. The same notations are used as in the proof of Theorem \ref{s3t1}. By homotopy replacement, we first have
$$BK(A) \simeq X_{12}\cup X_{13}\cup X_{23}.$$
Without affecting the actual cohomology of $BK(A)$, we prefer to first paste $X_{12}$ and $X_{13}$. By Lemma \ref{s3l1}, we get $$H^{*}(X_{12}\cup X_{13};\,\mathbb{F}_2)\cong \Sigma \mathrm{coker}~j^{*-1}_{3}\oplus \ker\,j^{*}_{3},$$
where $$j^{*}_{3}: H^*(X_{12};\,\mathbb{F}_2)\oplus H^*(X_{13};\,\mathbb{F}_2) \stackrel{}\to H^*(X_{1};\,\mathbb{F}_{2}).$$
In addition, by Lemma \ref{s3l3}, we have the following commutative diagram
$$\xymatrix{
H^*(X_{12};\,\mathbb{F}_{2})\ar[d]_{\rho^*_{12}}\hspace{-0.7cm}&\oplus&\hspace{-0.8cm}H^*(X_{13};\,\mathbb{F}_{2}) \ar[d]_{\rho^*_{13}}\ar[r]^{j^{*}_{3}}& H^*(X_{1};\,\mathbb{F}_{2})\ar[d]^{\rho^*_{1}} \\
P_{12}\hspace{-0.4cm}&\oplus &\hspace{-0.3cm}P_{13}\ar[r] & P_{1}.}$$
Moreover, from Remark \ref{s3r1} and the fact that $\{1\}$ represents a finite parabolic subgroup and its integral cohomology has no 2-torsion, we deduce that $\rho^*_{1}$ is an isomorphism. Note that both $\{1,~2\}$ and $\{1,~3\}$ represent a $G_2$, and combined with \ref{s4f1}, we have
\begin{equation}\label{s4f4}
H^*(X_{12};\,\mathbb{F}_{2})\cong \mathbb{F}_{2}[y_{4},~y_{6},~y_{7},~\omega_3]~,~ H^*(X_{13};\,\mathbb{F}_{2})\cong \mathbb{F}_{2}[y'_{4},~y'_{6},~y'_{7},~\omega_2].
\end{equation}
Furthermore, by Lemma \ref{s4l1}, we further deduce that $\rho^*_{12}$ and $\rho^*_{13}$ are surjective with kernels $(y_{7})$ and $(y'_{7})$, respectively.
Therefore, \begin{align*}
\ker\,j^{*}_{3}=\{(u,~v)\in P_{12}\oplus P_{13}|~ u=v\}\oplus(y_{7})\oplus(y'_{7})\cong& P_{12}\cap P_{13}\oplus (y_{7})\oplus(y'_{7})\\
\cong & P_{123}\oplus (y_{7})\oplus(y'_{7})
\end{align*}
and $\mathrm{coker}~j^{*}_{3}= P_{1}-(P_{12}+P_{13}).$

Thus, we obtain that
$$H^{*}(X_{12}\cup X_{13};\,\mathbb{F}_{2})\cong \Sigma(P_{1}-(P_{12}+P_{13}))\oplus P_{123}\oplus(y_{7})\oplus(y'_{7}).$$

Now, we shall compute the cohomology groups of $X_{2}\cup X_{3}$, which is the intersection between $X_{12}\cup X_{13}$ and $X_{23}$. We obtain $H^{*}(X_{2}\cup X_{3};\,\mathbb{F}_{2})$ in the same way as above,
$$H^{*}(X_{2}\cup X_{3}; \,\mathbb{F}_{2})\cong \Sigma \mathrm{coker}\,j^{*-1}_{4}\oplus \ker\,j^{*}_{4} \cong \Sigma(P-(P_{2}+P_{3}))\oplus P_{23},$$
where $j^{*}_{4}: P_{2}\oplus P_{3}\stackrel{}\to P$ and $j^{*}_{4}(u,~v)= u-v$ for any $u \in P_{2}$, $v \in P_{3}$.

With the same analysis, we obtain
$$H^{*}(BK(A);\,\mathbb{F}_{2})\cong H^{*}((X_{12}\cup X_{13})\cup X_{23};\,\mathbb{F}_{2}) \cong \Sigma\mathrm{coker}~j_{5}^{*-1} \oplus \ker\,j_{5}^{*},$$
where $$j_{5}^{*}: H^{*}(X_{12}\cup X_{13};\,\mathbb{F}_{2})\oplus H^{*}(X_{23};\,\mathbb{F}_{2})\to H^{*}(X_{2}\cup X_{3};\,\mathbb{F}_{2}).$$
In addition, we have the following commutative diagram
$$\xymatrix{
H^{*}(X_{12}\cup X_{13};\,\mathbb{F}_{2})\ar[d]_{\cong} \hspace{-1.6cm}& \oplus H^*(X_{23};\,\mathbb{F}_{2}) \ar[d]_{\rho^*_{23}}\ar[r]^{j^{*}_{5}}& H^{*}(X_{2}\cup X_{3};\,\mathbb{F}_{2})\ar[d]^{\cong} \\
\Sigma(P_{1}-(P_{12}+P_{13}))\oplus P_{123}\oplus(y_{7})\oplus(y'_{7}) \hspace{-1.4cm}& \oplus P_{23}\ar[r] & \Sigma(P-(P_{2}+P_{3}))\oplus P_{23}.}$$
Then, by Lemma \ref{s4l1}, we conclude that $\rho^*_{13}$ is surjective and
$$\ker\,\rho^*_{23}\cong (y''_{7})\in \mathbb{F}_{2}[y''_{4},~y''_{6},~y''_{7},~\omega_1] \cong H^*(X_{23};\,\mathbb{F}_{2}).$$
Therefore, $(y''_{7}) \subseteq \ker\,j_{5}^{*}$. Since the cohomology group of $BK(A)$ is independent of the pasting order, choosing the other two pasting orders for $X_{12}\cup X_{13}\cup X_{23}$, we arrive at the fact that $(y_{7}),~(y'_{7}) \subseteq \ker\,j_{5}^{*}$ similarly. Moreover, by the fact that $\Sigma(P_{1}-(P_{12}+P_{13}))$ and $\Sigma(P-(P_{2}+P_{3}))$ are the odd degree parts and $j_{5}^{*}$ preserves degrees, we can decompose $j_{5}^{*}$ into two homomorphisms $$j^{*}_{6}: \Sigma(P_{1}-(P_{12}+P_{13}))\oplus(y_{7})\oplus(y'_{7})\to \Sigma(P-(P_{2}+P_{3}))~~~{\rm and}~~~j^{*}_{7}: P_{123}\oplus P_{23}\to P_{23}.$$
Note that the continuous map $f: (X_{2}\cup X_{3};~X_{2},~X_{3})\rightarrow (X_{12}\cup X_{13};~X_{12},~X_{13})$ induces the following commutative diagram between two Mayer-Vietoris sequences,
$$\xymatrix{
H^{*-1}( X_{1};\,\mathbb{F}_{2}) \ar[d]_{f_1^*} \ar[r]^{\delta^{*-1}}&H^{*}( X_{12}\cup X_{13};\,\mathbb{F}_{2}) \ar[d]_{f^*} \ar[r]^{\hspace{-0.6cm}i_{1}^*} &H^{*}(X_{12};\,\mathbb{F}_{2})\oplus H^{*}(X_{13};\,\mathbb{F}_{2}) \ar[d]_{f_2^*} \\
H^{*-1}( X_{\emptyset};\,\mathbb{F}_{2}) \ar[r]^{\delta^{*-1}} & H^{*}( X_{2}\cup X_{3};\,\mathbb{F}_{2})\ar[r]^{\hspace{-0.6cm}i_{2}^*} & H^{*}(X_{2};\,\mathbb{F}_{2})\oplus H^{*}(X_{3};\,\mathbb{F}_{2}),}$$
which shows the behavior of $j^{*}_{6}$ and $j^{*}_{7}$. Observe that $$H^{*}(X_{2};\,\mathbb{F}_{2})\cong P_2,~ H^{*}(X_{3};\,\mathbb{F}_{2})\cong P_3,~{\rm and}~\ker\,i_{2}^*=\Sigma(P-(P_{2}+P_{3})),$$
In addition, from the right square in the above diagram, it follows that $j^{*}_{7}(u,~v)= u-v$ for any $u \in P_{123}$, $v \in P_{23}$.
Additionally, by the left commutative square, we find that $j^{*}_{6}(\Sigma (w+P_{12}+P_{13}))=\Sigma (w+P_{2}+P_{3})$ for any $w \in P_{1}$.
Thus, we can decompose the corresponding kernel part and cokernel part as follows,
\begin{align*}
&\Sigma\mathrm{coker}~j_{5}^{*-1} \oplus \ker\,j_{5}^{*}\\
&\cong \Sigma \mathrm{coker}~j^{*-1}_{6} \oplus \Sigma \mathrm{coker}~j^{*-1}_{7} \oplus \ker\,j^{*}_{6} \oplus \ker\,j^{*}_{7}\oplus(y''_{7}).
\end{align*}

By easy computation, we have
\begin{align*}
&\Sigma \mathrm{coker}~j^{*-1}_{6}\cong \Sigma^{2}((P-(P_{2}+P_{3}))-j^{*-1}_{6}(P_{1}-(P_{12}+P_{13}))),~\Sigma \mathrm{coker}~j^{*-1}_{7}\cong 0,\\
&\ker\,j^{*}_{6}\cong\Sigma(P_{1}\cap (P_{2}+P_{3})-(P_{12}+P_{13}))\oplus (y_{7})\oplus(y'_{7}),~ \ker\,j^{*}_{7}\cong P_{123}.
\end{align*}
With the same method as in Theorem \ref{s3t1}, we find that $\Sigma \mathrm{coker}~j^{*-1}_{6}$ can be reduced to $\Sigma (P-(P_{1}+P_{2}+P_{3}))$.
Therefore, we conclude that
\begin{align*}
H^{*}(BK(A);\,\mathbb{F}_{2})\cong ~& \Sigma^{2}(P-(P_{1}+P_{2}+P_{3}))\oplus \Sigma(P_{1}\cap (P_{2}+P_{3})-(P_{12}+P_{13}))\oplus P_{123}\\
&\oplus (y_{7})\oplus(y'_{7})\oplus(y''_{7}),
\end{align*}
which finishes the proof.\end{proof}

\subsection{Steenrod operations}

Regarding our calculations in Section \ref{s3} and this Section, there are two parts of the cohomology of $H^{*}(BK(A);\,\mathbb{F}_{p})$: the kernel part and cokernel part. Recall that $\delta$ is the coboundary map of Mayer-Vietoris sequence, the cokernel part of $H^{*}(BK(A);\,\mathbb{F}_{p})$ is isomorphic to $\mathrm{im}\,\delta^{*-1}$, and with little abuse of notation we use $\Sigma$ to denote this isomorphism, $\Sigma :\mathrm{coker}~j^{*-1} \cong \mathrm{im}\,\delta^{*-1}$. The rationale here is that $\Sigma$ commutes with Steenrod operations just as the conventional suspension does with Steenrod operations. Moreover, the cohomology is represented by the invariants of Weyl groups, which is the image of cohomology homomorphism $\rho^{*}_{\mathbb{F}_{p}}: H^{*}(BK(A);\,\mathbb{F}_{p})\rightarrow H^{*}(BT;\,\mathbb{F}_{p})$ induced by the continuous map $\rho$; thus, the Steenrod operation commutes with $ \rho^{*}_{\mathbb{F}_{p}}$. Therefore, the behavior of the Steenrod operation on the cohomology of $BT$ is sufficient to describe the Steenrod operation on the cohomology of $BK(A)$.

\section{The computation of Poincar\'e series of an example for different cohomology coefficients \label{s5}}

While we do not obtain the cohomology groups of the classifying spaces of rank 3 infinite Kac-Moody groups as a standard form of abelian groups, but our result is not far from that form. To show this, we compute the  Poincar\'e series of the rational, mod 3, and mod 2 cohomology groups of the classifying space of certain Kac-Moody group in Example \ref{s2e1}, the differences among which reveal the 3-torsion and 2-torsion information of the integral cohomology groups.

The following theorem is indispensable for computating the invariant of infinite Weyl groups.
\begin{Thm}(\cite[Main Theorem]{ZJ14})\label{s5t1}
Let $A$ be a Cartan matrix of indefinite type, and $\psi~(\deg \psi=4)$ be an invariant bilinear form called the Killing form. Then
$$H^{*}(BT;\,\mathbb{Q})^{W(A)}\cong \left\{\begin{array}{ll}
\mathbb{Q}\hspace{1cm}  \text{if $A$ is nonsymmetrizable},\\
\mathbb{Q}[\psi]\hspace{0.5cm}  \text{if $A$ is symmetrizable}.
\end{array}\right.
$$
\end{Thm}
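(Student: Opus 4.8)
The plan is to discard the topology and work entirely with the $n$-dimensional reflection representation. Put $V=H^{2}(BT;\mathbb{Q})=\bigoplus_{i}\mathbb{Q}\omega_{i}$ with the $W=W(A)$-action recalled in Subsection~\ref{s2.1}, so that $H^{*}(BT;\mathbb{Q})=\mathrm{Sym}(V)$ and the task is to compute $\mathrm{Sym}(V)^{W}$. Two preliminary points are elementary. First, there is no nonzero invariant linear form: if $\ell=\sum_{i}c_{i}\omega_{i}$ is $W$-fixed then $\sigma_{j}\ell-\ell=-c_{j}\alpha_{j}=0$ for each $j$, and since $a_{jj}=2$ the $\omega_{j}$-coefficient of $\alpha_{j}$ is nonzero, so $c_{j}=0$; hence $V^{W}=0$. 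Second, I would compute $\mathrm{Sym}^{2}(V)^{W}$, the space of $W$-invariant quadratic forms, by plain linear algebra: invariance of a symmetric form under all $\sigma_{j}$ is a homogeneous linear system on its Gram matrix whose solvability is exactly the cyclic obstruction to symmetrizability of $A$. One finds that $\mathrm{Sym}^{2}(V)^{W}$ is the line spanned by the given form $\psi$ (polynomial degree $2$, i.e.\ cohomological degree $4$) when $A$ is symmetrizable, and is $0$ when $A$ is non-symmetrizable; this rests on the standard characterization of symmetrizability via the invariant bilinear form, cf.\ \cite[Ch.~2]{Kac90}.

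The core of the proof is a transcendence-degree computation: since $\mathrm{Sym}(V)^{W}$ is a domain, it suffices to show that $\mathrm{trdeg}_{\mathbb{Q}}\mathrm{Frac}(\mathrm{Sym}(V)^{W})$ equals $1$ when $A$ is symmetrizable and $0$ when $A$ is non-symmetrizable. This transcendence degree is $n-r$, where $r$ is the dimension of the Zariski closure of a generic $W$-orbit in $V^{*}$; and since such a closure is a Zariski-closed $W$-stable set, it is automatically stable under the Zariski closure $H\subseteq\mathrm{GL}(V)$ of $W$, so $\overline{W\cdot v}=\overline{H\cdot v}$. Thus everything reduces to identifying $H$. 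The inputs would be: $H$ contains all the reflections $\sigma_{i}$; the indecomposability of $A$ leaves essentially no $W$-invariant subspace of $V$, so $H$ acts (almost) irreducibly; and, crucially, $A$ being of indefinite type makes $W(A)$ non-amenable and forces it to contain elements of infinite order with a real eigenvalue $>1$ (for instance a Coxeter element $\sigma_{1}\cdots\sigma_{n}$), so $H$ is infinite and not virtually solvable. With these, the classification of linear algebraic groups forces $H$ to contain $\mathrm{SL}(V)$ unless it preserves a nondegenerate bilinear form, in which case — by the uniqueness in the second preliminary point — $H=\mathrm{O}(\psi)$. For generic $v$ this makes $\overline{W\cdot v}=\overline{H\cdot v}$ equal to $V^{*}$ (non-symmetrizable, dimension $n$) or to the quadric $\{\psi=\psi(v)\}$ (symmetrizable, dimension $n-1$), hence $r=n$ resp.\ $r=n-1$, which gives the two transcendence degrees.

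It then remains to upgrade the transcendence degree to the exact ring, which is soft. The ring $R=\mathrm{Sym}(V)^{W}$ is a graded, connected (degree-zero part $\mathbb{Q}$), normal domain, as invariants of the normal domain $\mathrm{Sym}(V)$; in the symmetrizable case it is integral over $\mathbb{Q}[\psi]$ (there are no negative degrees) and has a finitely generated fraction field, so it is a finite $\mathbb{Q}[\psi]$-module, hence a finitely generated $\mathbb{Q}$-algebra. If $\mathrm{trdeg}\,R=0$, comparing homogeneous components in an integral relation forces every positive-degree homogeneous element to vanish, so $R=\mathbb{Q}$: the non-symmetrizable case. If $\mathrm{trdeg}\,R=1$, then $R$ is a one-dimensional graded connected normal domain over $\mathbb{Q}$, hence a polynomial ring $\mathbb{Q}[u]$ on a single homogeneous generator; writing $\psi=cu^{k}$ and comparing degrees gives $k=1$ (the alternative $k=2$ would make $u$ a degree-one invariant, contradicting $V^{W}=0$), so $R=\mathbb{Q}[\psi]$.

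The one genuinely hard step is the middle one: controlling the Zariski closure of $W(A)$, equivalently showing that a generic Weyl orbit is dense in its $\psi$-level set (symmetrizable) or in all of $V^{*}$ (non-symmetrizable). This is precisely where the indefinite-type hypothesis enters — it is what makes $W(A)$ large enough (non-amenable, with a spectrally expanding Coxeter element) — and it is also where the dichotomy in the conclusion is forced, namely by the existence or non-existence of an invariant bilinear form established in the second preliminary point; everything else in the argument is either elementary or a standard fact about graded normal rings.
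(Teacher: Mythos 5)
You should first note that the paper itself contains no proof of this statement: Theorem \ref{s5t1} is quoted verbatim from \cite{ZJ14} and used as a black box, so there is no internal argument to compare yours against; the relevant comparison is with Zhao--Jin's paper. Your overall reduction is reasonable and its outer layers are sound: the computation $V^{W}=0$, the identification of $\mathrm{Sym}^{2}(V)^{W}$ with the (at most one-dimensional) space of invariant forms via the symmetrizability criterion of \cite[Ch.~2]{Kac90}, the passage from $W$ to its Zariski closure $H$ (invariance is a closed condition), the Rosenlicht-type bound $\mathrm{trdeg}\,\mathrm{Frac}(\mathrm{Sym}(V)^{W})\le \dim V-\dim(\text{generic }H\text{-orbit})$, and the graded bookkeeping at the end are all fixable or standard.

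The genuine gap is the central step, the identification of $H$. The dichotomy you invoke --- ``$H$ contains $\mathrm{SL}(V)$ unless it preserves a nondegenerate bilinear form, and in the latter case $H=\mathrm{O}(\psi)$ by uniqueness of the form'' --- is not a consequence of any classification of linear algebraic groups: there are plenty of infinite, irreducibly acting proper algebraic subgroups of $\mathrm{GL}(V)$ preserving no bilinear form (e.g.\ images of smaller groups in irreducible representations), and, symmetrically, an irreducible subgroup preserving exactly one quadratic form need not be the full orthogonal group of that form (irreducible proper subgroups of $\mathrm{O}(\psi)$ abound, and they also preserve $\psi$ and nothing else). What rescues the symmetrizable case is not uniqueness of $\psi$ but a real theorem about groups \emph{generated by reflections}: after rescaling, the representation is orthogonal and Zariski density of an irreducible, non-finite, non-affine Coxeter group in $\mathrm{O}$ of its invariant form is the Benoist--de la Harpe theorem (2004), which you would need to cite or reprove; non-amenability and a Coxeter element of spectral radius $>1$ do not by themselves yield it. In the non-symmetrizable case the situation is worse: the Kac--Moody weight representation is \emph{not} the Tits geometric representation of the underlying Coxeter group, so even that theorem does not apply, and showing that $H$ is large enough to kill all non-constant invariants is essentially the entire content of the theorem --- precisely the part your proposal leaves as ``the one genuinely hard step.'' Two smaller repairs: irreducibility of $V$ requires $\det A\neq 0$ (in general one only gets that every nonzero invariant subspace contains the root span, with $W$ acting trivially on the quotient, and this degenerate case must be handled separately); and the ``integral over $\mathbb{Q}[\psi]$'' claim at the end is not automatic from gradedness --- it is cleaner to deduce finite generation and normality of $\mathrm{Sym}(V)^{W}=\mathrm{Sym}(V)^{H}$ from reductivity of $H$ once $H$ has actually been identified.
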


\begin{Ex}
Recall the Cartan matrix $$A=\left(
\begin{array}{ccc}
2 & -1 & -3\\
-3 & 2 & -1\\
-2 & -4 & 2
\end{array}
\right)$$ in Example \ref{s2e1}. Its simplicial representation is
\begin{center}
\bigskip
\begin{tikzpicture}
\filldraw[black] (0,0) circle (2pt) node[anchor=east]{1}
(2,0) circle (2pt) node[anchor=west]{2~.}
(1,1.73) circle (2pt)node[anchor=south]{3};
\draw (1,-0.5) node{\rm (ii). $\{\{1,2\},~\{3\}\}$};
\draw (0,0)--(2,0);
\end{tikzpicture} \hspace{0.4cm}
\end{center}
\bigskip
By Theorems \ref{s3t1}and \ref{s3t2}, we know that the rational and mod $3$ cohomology groups of its corresponding classifying spaces will be the following form
$$\Sigma(P-(P_{12}+P_{3}))\oplus P_{123}$$
with different coefficients $\,\mathbb{Q}$ and $\mathbb{F}_{3}$, respectively. Since $K(A_{12})\cong G_{2}$, $K(A)$ belongs to the refined class
$${\rm (iii)}.~\{\{1,2\},~\{3\},~K(A_{12})\cong G_{2}\}.$$
By Theorems \ref{s4t1}, the mod $2$ cohomology group will be the following form $$\Sigma(P-(P_{12}+P_{3}))\oplus P_{123}\oplus I$$ with coefficient $\mathbb{F}_{2}$.

We next compute the Poincar\'e series of the cohomology group with rational, mod 3, and mod 2 coefficients.

$\mathbf{Rational}~~~\mathbf{case:}$
Let $H^{*}(BT;\,\mathbb{Q})=P \cong \mathbb{Q}[\omega_{1},~\omega_{2},~\omega_{3}]$. We will adhere to the same notations as in section \ref{s4}. $H^{*}(BG_{2};\,\mathbb{Q})\cong \mathbb{Q}[y_{6},~y_{22}].$ Then Theorem \ref{s2t1} implies that $$P_{3}\cong \mathbb{Q}[\omega_{1},~\omega_{2},~\omega_{3}(\omega_{3}-\alpha_{3})]~~{\rm and}~~ P_{12}\cong \mathbb{Q}[\omega_{3},~y_{6},~y_{22}],$$
where $\alpha_{3}=-3\omega_1-\omega_2+2\omega_3$.
Additionally, from Theorem \ref{s5t1} and the fact that $A$ is nonsymmetrizable, it follows that $P_{123}\cong \mathbb{Q}$. Thus, the Poincar\'e series for the rational cohomology is

$\mathbf{Mod}$ $\mathbf{3}$ $\mathbf{case:}$
Let $H^{*}(BT;\,\mathbb{F}_{3})=P\cong \mathbb{F}_{3}[\omega_{1},~\omega_{2},~\omega_{3}]$. A direct computation of the invariants of Weyl
groups shows that $P_{3}\cong\mathbb{F}_{3}[\omega_{1},~\omega_{2},~\omega_{3}(\omega_{3}-\alpha_{3})]$ and $$P_{12}\cong\mathbb{F}_{3}[\omega_{3},~\omega_{1}(\omega_{1}+\omega_{3}),~\omega_{2}(\omega_{1}+2\omega_{2}+\omega_{3})(\omega_{1}+\omega_{2})(\omega_{1}
+\omega_{2}+2\omega_{3})(2\omega_{1}+\omega_{2}+\omega_{3})(\omega_{2}+\omega_{3})].$$
Recall the action of $W(A)$ on $H^{*}(BT;\,\mathbb{F}_{3})$ in Subsection \ref{s2.1}, which induces a representation of $W(A)$ in the general linear group $GL(3,\,\mathbb{F}_{3})$. Since the image of this representation is surjective, we have $$H^{*}(BT;\,\mathbb{Q})^{W(A)} \cong H^{*}(BT;\,\mathbb{Q})^{GL(3,\,\mathbb{F}_{3})} \cong \mathbb{F}_{3}[d_{36},~d_{48},~d_{52}],$$
where $d_{36},~d_{48},~d_{52}$ are so-called Dickson invariants in \cite{Dic11}.
The Poincar\'e series for the mod $3$ cohomology is
\begin{equation*}
\begin{split}
&H(~\Sigma(P-(P_{12}+P_{3}))\oplus P_{123}~)\\
=& tH(~P-(P_{12}+P_{3})~)+H(~P_{123}~)\\
=&t(H(~P~)-(H(~P_{12}~)+H(~P_{3})-H(~P_{123}~)~))+H(~P_{123}~)\\
=&t(\frac{1}{(1-t^{2})^{3}}-(\frac{1}{(1-t^{2})(1-t^{4})(1-t^{12})}+\frac{1}{(1-t^{2})^{2}(1-t^{4})}-\frac{1}{(1-t^{36})(1-t^{48})(1-t^{52})}))\\
&+\frac{1}{(1-t^{36})(1-t^{48})(1-t^{52})}\\
=&\frac{t^{3}}{(1-t^{2})^{2}(1-t^{4})}-\frac{t}{(1-t^{2})(1-t^{4})(1-t^{12})}+\frac{1+t}{(1-t^{36})(1-t^{48})(1-t^{52})}\\
=&\frac{g(t)}{(1-t^{36})(1-t^{48})(1-t^{52})},
\end{split}
\end{equation*}
where $g(t)$ is $1+2t^{7}+3t^{9}+6t^{11}+7t^{13}+11t^{15}+13t^{17}+18t^{19}+21t^{21}+27t^{23}+30t^{25}+37t^{27}+41t^{29}+49t^{31}+54t^{33}+63t^{35}+69t^{37}+78t^{39}+
84t^{41}+93t^{43}+99t^{45}+108t^{47}+115t^{49}+123t^{51}+130t^{53}+136t^{55}+141t^{57}+145t^{59}+149t^{61}+151t^{63}+154t^{65}+154t^{67}+155t^{69}+153t^{71}
+153t^{73}+149t^{75}+148t^{77}+142t^{79}+139t^{81}+131t^{83}+126t^{85}+117t^{87}+111t^{89}+102t^{91}+96t^{93}+87t^{95}+81t^{97}+72t^{99}+65t^{101}+57t^{103}
+51t^{105}+44t^{107}+39t^{109}+33t^{111}+28t^{113}+23t^{115}+19t^{117}+15t^{119}+12t^{121}+9t^{123}+6t^{125}+4t^{127}+2t^{129}+t^{131}$.

$\mathbf{Mod}$ $\mathbf{2}$ $\mathbf{case:}$
Let $H^{*}(BT;\,\mathbb{F}_{2})=P\cong \mathbb{F}_{2}[\omega_{1},~\omega_{2},~\omega_{3}]$. A direct computation of the invariants of Weyl groups shows that $$P_{3}\cong \mathbb{F}_{2}[\omega_{1},~\omega_{2},~\omega_{3}(\omega_{3}-\alpha_{3})],~
P_{12}\cong \mathbb{F}_{2}[\omega_{3},~\omega_{1}^{2}+\omega_{2}^{2}+\omega_{1}\omega_{2},~\omega_{1}\omega_{2}(\omega_{1}+\omega_{2})]$$
and $$P_{123}\cong \mathbb{F}_{2}[\omega_{1}^{2}+\omega_{2}^{2}+\omega_{1}\omega_{2},~\omega_{1}\omega_{2}(\omega_{1}+\omega_{2}),~\omega_{3}(\omega_{1}+\omega_{2}
+\omega_{3})(\omega_{2}+\omega_{3})(\omega_{1}+\omega_{3})].$$
The Poincar\'e series for the mod $2$ cohomology is
\begin{equation*}
\begin{split}
&H(~\Sigma(P-(P_{12}+P_{3}))\oplus P_{123}\oplus I~)\\
=& tH(~P-(P_{12}+P_{3})~)+H(~P_{123}~)+H(~I~)\\
=&t(H(~P~)-(H(~P_{12}~)+H(~P_{3}~)-H(~P_{123}~)))+H(~P_{123}~)+H(~I~)\\
=&t(\frac{1}{(1-t^{2})^{3}}-(\frac{1}{(1-t^{2})(1-t^{4})(1-t^{6})}+\frac{1}{(1-t^{2})^{2}(1-t^{4})}-\frac{1}{(1-t^{4})(1-t^{6})(1-t^{8})}))\\
&+\frac{1}{(1-t^{4})(1-t^{6})(1-t^{8})}+\frac{t^{7}}{(1-t^{2})^{3}}\\
=&\frac{t^{3}+t^{5}+t^{7}-t}{(1-t^{2})(1-t^{4})(1-t^{6})}+\frac{1+t}{(1-t^{4})(1-t^{6})(1-t^{8})}+\frac{t^{7}}{(1-t^{2})^{3}}\\
=&\frac{1+t^5+3t^7+6t^9+7t^{11}+7t^{13}+5t^{15}+3t^{17}+t^{19}}{(1-t^{4})(1-t^{6})(1-t^{8})}.
\end{split}
\end{equation*}
\end{Ex}

\section{An application of the computation of cohomology groups\label{s6}}
As an application of our computation \ref{s3t1}, \ref{s3t2} and \ref{s4t1}, we prove that there is a $p$-torsion  for each prime $p$ in the integral cohomology groups of classifying spaces of rank 3 infinite Kac-Moody groups. We need two lemmas to prove this statement. The first one is a generalization of Borel's Theorem \ref{s2t1} for rank 3 Kac-Moody groups, and this is just a direct corollary of our computation of cohomology groups.
\begin{Lem}\label{s6l1}Let $p$ be any prime. If $\mathbb{F}$ is $\mathbb{Q}$ or $\mathbb{F}_{p}$, then the image of induced
cohomology homomorphism $\rho^{*}_{\mathbb{F}}:H^{*}(BK(A);\,\mathbb{F})\rightarrow H^{*}(BT;\,\mathbb{F})$ is $H^{*}(BT;\,\mathbb{F})^{W(A)}$.
\end{Lem}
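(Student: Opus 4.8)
The plan is to read this off the explicit descriptions of $H^{*}(BK(A);\,\mathbb{F})$ in Theorems \ref{s3t1}, \ref{s3t2} and \ref{s4t1}, so that the lemma is indeed a direct corollary of those computations. The inclusion $\mathrm{im}\,\rho^{*}_{\mathbb{F}}\subseteq H^{*}(BT;\,\mathbb{F})^{W(A)}$ costs nothing: the relation $\rho\circ\varphi=\rho$ for $\varphi\in W(A)$ established in the proof of Lemma \ref{s3l2} is insensitive to the coefficient ring, so it gives $\varphi^{*}\circ\rho^{*}_{\mathbb{F}}=\rho^{*}_{\mathbb{F}}$ and hence $\mathrm{im}\,\rho^{*}_{\mathbb{F}}\subseteq H^{*}(BT;\,\mathbb{F})^{W(A)}$ over $\mathbb{F}=\mathbb{Q}$ or $\mathbb{F}_{p}$. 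All the content is therefore in the reverse inclusion, and the key observation is that $P_{123}=H^{*}(BT;\,\mathbb{F})^{W(A)}$ appears as a direct summand of $H^{*}(BK(A);\,\mathbb{F})$ in every case of those theorems; I will show that $\rho^{*}_{\mathbb{F}}$ restricted to this summand is the inclusion $P_{123}\hookrightarrow H^{*}(BT;\,\mathbb{F})$ while all other summands are annihilated.

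The mechanism is the factorization of $\rho$. Since $X_{\emptyset}=BT\subseteq X_{J}$ for every $J\in P(A)$, the map $\rho$ factors as $X_{\emptyset}\hookrightarrow X_{J}\hookrightarrow BK(A)$, so $\rho^{*}_{\mathbb{F}}$ equals the composite of the restriction $H^{*}(BK(A);\,\mathbb{F})\to H^{*}(X_{J};\,\mathbb{F})$ with $\rho^{*}_{J}$, for any single maximal $J$. I would then run through the summands. Every \emph{suspended} summand (the $\Sigma(\cdot)$ and $\Sigma^{2}(\cdot)$ pieces) is by construction contained in the image of a Mayer--Vietoris connecting homomorphism $\delta$; by exactness of the Mayer--Vietoris sequence of Lemma \ref{s3l1}, such a class restricts to zero on each of the two subcomplexes being glued, and iterating along the pasting order used in the proofs of Theorems \ref{s3t1} and \ref{s4t1} this propagates down to the restriction to every $X_{J}$ with $J\in P(A)$. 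Hence $\rho^{*}_{\mathbb{F}}$ kills every suspended summand. In the mod $2$ cases the additional ideal summands $I_{3}$, $I_{2}$, $I_{1}$ are, by Lemma \ref{s4l1}, exactly $\ker\rho^{*}_{12}$, $\ker\rho^{*}_{13}$, $\ker\rho^{*}_{23}$ inside the cohomology of the corresponding $G_{2}\times T^{1}$ parabolic; since the matching summand of $H^{*}(BK(A);\,\mathbb{F}_{2})$ maps into that kernel under restriction to $X_{12}$ (resp.\ $X_{13}$, $X_{23}$), factoring $\rho$ through that piece shows $\rho^{*}_{\mathbb{F}_{2}}$ annihilates $I_{3}$, $I_{2}$, $I_{1}$ as well.

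It remains to treat the $P_{123}$ summand. By Lemma \ref{s3l3}, this summand arises as the ``diagonal'' copy of $P_{123}$: in cases (ii), (iii) it is the diagonal inside $P_{12}\cap P_{3}$ (resp.\ $P_{12}\cap P_{13}$) $=P_{123}$, and after the last gluing in the multi-piece cases it is the diagonal copy of $P_{123}$ inside the kernel $\ker j^{*}_{7}\subseteq P_{123}\oplus P_{23}$. In either situation a class in this summand restricts, on a suitable maximal $X_{J}$, to an element of $P_{J}\subseteq H^{*}(BT;\,\mathbb{F})$ equal to the given element of $P_{123}$; chasing the factorization of $\rho$ then shows $\rho^{*}_{\mathbb{F}}$ is precisely the inclusion $P_{123}\hookrightarrow H^{*}(BT;\,\mathbb{F})$ on this summand. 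Combining this with the vanishing on all other summands yields $\mathrm{im}\,\rho^{*}_{\mathbb{F}}=P_{123}=H^{*}(BT;\,\mathbb{F})^{W(A)}$, which together with the first paragraph proves the lemma.

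The only real difficulty I anticipate is bookkeeping in the multi-piece cases (i), (iv) and (vii)--(x): one must keep careful track of which factor of the iterated short exact sequences of Lemma \ref{s3l1} each summand of $H^{*}(BK(A);\,\mathbb{F})$ lives in, and of the compatibility of the identifications in Lemmas \ref{s3l1}, \ref{s3l3} and \ref{s4l1} with the various restriction maps $X_{H}\hookrightarrow X_{J}$. Once this is laid out, no input beyond Theorems \ref{s3t1}, \ref{s3t2}, \ref{s4t1} and Lemmas \ref{s3l2}, \ref{s3l3}, \ref{s4l1} is needed.
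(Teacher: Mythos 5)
Your proposal is correct and follows essentially the same route as the paper: the containment $\mathrm{im}\,\rho^{*}_{\mathbb{F}}\subseteq H^{*}(BT;\,\mathbb{F})^{W(A)}$ via (the argument of) Lemma \ref{s3l2}, and the reverse inclusion from the fact that $P_{123}=H^{*}(BT;\,\mathbb{F})^{W(A)}$ occurs as a direct summand in Theorems \ref{s3t1}, \ref{s3t2}, \ref{s4t1}, on which $\rho^{*}_{\mathbb{F}}$ restricts to the inclusion. Your additional verification that the suspended and ideal summands are annihilated is harmless but not needed, since the first containment already forces it; the paper's proof omits it for exactly this reason.
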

\begin{proof}
By Lemma \ref{s3l2}, we have $$\mathrm{im}\,\rho^{*}_{\mathbb{F}} \subseteq H^{*}(BT;\,\mathbb{F})^{W(A)}.$$
From Theorems \ref{s3t1}, \ref{s3t2} and \ref{s4t1}, it follows that $H^{*}(BT;\,\mathbb{F})^{W(A)}=P_{123}$ is a drect summand of $H^{*}(BK(A);\,\mathbb{F})$ and hence
$\mathrm{im}\,\rho^{*}_{\mathbb{F}}= H^{*}(BT;\,\mathbb{F})^{W(A)}.$
\end{proof}

The induced homomorphism $\rho^{*}_{\mathbb{Z}}: H^{*}(BK;\,\mathbb{Z})\rightarrow H^{*}(BT;\,\mathbb{Z})$ must map the torsion part of $H^{*}(BK;\,\mathbb{Z})$ to $0$ since $H^{*}(BT;\,\mathbb{Z})$ is a free abelian group, which induces a homomorphism $\widetilde{\rho}^{*}_{\mathbb{Z}}: H^{*}(BK;\,\mathbb{Z})^{free}\rightarrow H^{*}(BT;\,\mathbb{Z})$. Then, we have two commutative diagrams,
\begin{center}
$\xymatrix{
H^{*}(BK(A);\,\mathbb{Z})^{free} \ar[d]_{\beta} \ar[r]^{\widetilde{\rho}^{*}_{\mathbb{Z}}} &H^{*}(BT;\,\mathbb{Z}) \ar[d]^{\beta} \\
H^{*}(BK(A);\,\mathbb{Q})\ar[r]^{\rho^{*}_{\mathbb{Q}}} & H^{*}(BT;\,\mathbb{Q}),}$
$\xymatrix{
H^{*}(BK(A);\,\mathbb{Z})^{free} \ar[d]_{\gamma} \ar[r]^{\widetilde{\rho}^{*}_{\mathbb{Z}}} &H^{*}(BT;\,\mathbb{Z}) \ar[d]^{\gamma} \\
H^{*}(BK(A);\mathbb{F}_{p})\ar[r]^{\rho^{*}_{\mathbb{F}_{p}}} & H^{*}(BT;\mathbb{F}_{p}),}$
\end{center}
where $\beta$ and $\gamma$ are the cohomology homomorphisms induced, respectively, by coefficient group homomorphisms $\mathbb{Z} \rightarrow \mathbb{Q}$ and $\mathbb{Z} \rightarrow \mathbb{F}_{p}$. Moreover, by the universal coefficients theorem and the condition that $H^{*}(BK(A);\,\mathbb{Z})$ has no a $p$-torsion , we know that $\beta$ and $\gamma$ are surjective for $H^{*}(BK(A);\,\mathbb{Z})^{free}$.
Combining these two commutative diagrams, we have the following lemma.
\begin{Lem}\label{s6l2} If $H^{*}(BK(A);\,\mathbb{Z})$ has no a $p$-torsion , then
$$\mathrm{im}\,\rho^{*}_{\mathbb{Q}}= \mathrm{im}\,\beta\circ\widetilde{\rho}^*_{\mathbb{Z}}\cong\mathrm{im}\,\widetilde{\rho}^*_{\mathbb{Z}}\otimes\mathbb{Q}~,~ \mathrm{im}\,\rho^{*}_{\mathbb{F}_{p}}= \mathrm{im}\,\gamma\circ\widetilde{\rho}^*_{\mathbb{Z}}\cong\mathrm{im}\,\widetilde{\rho}^{*}_{\mathbb{Z}}\otimes\mathbb{F}_{p}.$$
\end{Lem}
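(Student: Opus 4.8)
The plan is to read off both displayed isomorphisms by a short diagram chase in the two commutative squares written just above the statement, reducing everything to exactness of $-\otimes\mathbb{Q}$ and right-exactness of $-\otimes\mathbb{F}_{p}$ on degreewise finitely generated graded abelian groups. First I would set $M=H^{*}(BK(A);\mathbb{Z})^{free}$, $N=H^{*}(BT;\mathbb{Z})$ and $L=\mathrm{im}\,\widetilde{\rho}^{*}_{\mathbb{Z}}\subseteq N$; since $N$ is degreewise free of finite rank, so is its subgroup $L$, and the surjection $M\twoheadrightarrow L$ induced by $\widetilde{\rho}^{*}_{\mathbb{Z}}$ splits. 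By the universal coefficients theorem together with the hypothesis that $H^{*}(BK(A);\mathbb{Z})$ has no $p$-torsion, $\beta$ and $\gamma$ identify $H^{*}(BK(A);\mathbb{Q})$ with $M\otimes\mathbb{Q}$ and $H^{*}(BK(A);\mathbb{F}_{p})$ with $M\otimes\mathbb{F}_{p}$; correspondingly $\rho^{*}_{\mathbb{Q}}$ becomes $\widetilde{\rho}^{*}_{\mathbb{Z}}\otimes\mathbb{Q}$ and $\rho^{*}_{\mathbb{F}_{p}}$ becomes $\widetilde{\rho}^{*}_{\mathbb{Z}}\otimes\mathbb{F}_{p}$, while $\gamma$ on $BK(A)$ is surjective and $\beta(M)$ spans $H^{*}(BK(A);\mathbb{Q})$ over $\mathbb{Q}$.

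Next I would chase the mod $p$ square: surjectivity of $\gamma$ on $BK(A)$ gives $\mathrm{im}\,\rho^{*}_{\mathbb{F}_{p}}=\mathrm{im}(\rho^{*}_{\mathbb{F}_{p}}\circ\gamma)=\mathrm{im}(\gamma\circ\widetilde{\rho}^{*}_{\mathbb{Z}})$, and since $\widetilde{\rho}^{*}_{\mathbb{Z}}$ factors as $M\twoheadrightarrow L\hookrightarrow N$ this equals $\gamma(L)$, the reduction mod $p$ of the lattice $L$ inside $N\otimes\mathbb{F}_{p}$; that is the first equality in the mod $p$ formula. Chasing the rational square in the same way, $\mathrm{im}\,\rho^{*}_{\mathbb{Q}}$ is the $\mathbb{Q}$-span of $\beta(L)$ inside $N\otimes\mathbb{Q}$, and since $\mathbb{Q}$ is flat while $\beta\colon N\hookrightarrow N\otimes\mathbb{Q}$ is injective, that span is $L\otimes\mathbb{Q}=\mathrm{im}\,\widetilde{\rho}^{*}_{\mathbb{Z}}\otimes\mathbb{Q}$. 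At this level both formulas hold verbatim, once the right-hand sides are understood as the images of the maps $\widetilde{\rho}^{*}_{\mathbb{Z}}\otimes\mathbb{Q}$ and $\widetilde{\rho}^{*}_{\mathbb{Z}}\otimes\mathbb{F}_{p}$.

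The delicate point — and the step I expect to be the real obstacle — is recognising $\gamma(L)$ abstractly as $(\mathrm{im}\,\widetilde{\rho}^{*}_{\mathbb{Z}})\otimes\mathbb{F}_{p}$. Applying $-\otimes\mathbb{F}_{p}$ to $0\to L\to N\to N/L\to 0$ shows that the natural surjection $(\mathrm{im}\,\widetilde{\rho}^{*}_{\mathbb{Z}})\otimes\mathbb{F}_{p}\twoheadrightarrow\gamma(L)$ has kernel $\mathrm{Tor}^{\mathbb{Z}}_{1}(N/L,\mathbb{F}_{p})$, so it is an isomorphism exactly when $N/L=H^{*}(BT;\mathbb{Z})/\mathrm{im}\,\widetilde{\rho}^{*}_{\mathbb{Z}}$ is $p$-torsion-free; this requires input beyond formal homological algebra, and I would try to extract it from the explicit forms of $H^{*}(BK(A);\mathbb{Q})$ and $H^{*}(BK(A);\mathbb{F}_{p})$ in Theorems \ref{s3t1}, \ref{s3t2} and \ref{s4t1}, using Lemma \ref{s3l5} and Remark \ref{s3r1} for the $p$-torsion-free contributions of the finite-type parabolics, and comparing degreewise dimensions under the no-$p$-torsion hypothesis. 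In any event, the consequence I expect to be used afterwards (in the proof of Theorem \ref{s6t1}, combined with Lemma \ref{s6l1}) is only the degreewise inequality $\dim_{\mathbb{F}_{p}}(\mathrm{im}\,\rho^{*}_{\mathbb{F}_{p}})_{k}\le\operatorname{rank}L_{k}=\dim_{\mathbb{Q}}(\mathrm{im}\,\rho^{*}_{\mathbb{Q}})_{k}$ for all $k$, and this drops out of the chase in the previous paragraph with no extra input. So the hard part is not the diagram chase but pinning down when reduction mod $p$ fails to shrink the integral lattice $\mathrm{im}\,\widetilde{\rho}^{*}_{\mathbb{Z}}$.
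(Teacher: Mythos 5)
Your argument is essentially the paper's own proof: the paper obtains the lemma by nothing more than combining the two commutative squares with the universal-coefficient surjectivity of $\beta$ and $\gamma$ under the no-$p$-torsion hypothesis, which is exactly the diagram chase in your first two paragraphs. The ``delicate point'' you isolate is a looseness of wording rather than a step you must supply: in the paper $\mathrm{im}\,\widetilde{\rho}^{*}_{\mathbb{Z}}\otimes\mathbb{F}_{p}$ is used (as the proof of Theorem \ref{s6t1} makes clear) to mean the mod-$p$ reduction $\gamma(\mathrm{im}\,\widetilde{\rho}^{*}_{\mathbb{Z}})\subseteq H^{*}(BT;\,\mathbb{F}_{p})$, and only the rank inequality you derive is ever invoked, so the saturation condition (that $H^{*}(BT;\,\mathbb{Z})/\mathrm{im}\,\widetilde{\rho}^{*}_{\mathbb{Z}}$ be $p$-torsion-free) which a literal reading of the displayed isomorphism would require is neither established nor needed in the paper.
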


The main result of this section is stated as follows.
\begin{Thm}\label{s6t1}
There is a $p$-torsion  for each prime $p$ in the integral cohomology groups of classifying spaces of rank 3 infinite Kac-Moody groups.
\end{Thm}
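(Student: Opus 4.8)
## Proof proposal

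The plan is to argue by contradiction for a fixed prime $p$: suppose $H^{*}(BK(A);\,\mathbb{Z})$ has no $p$-torsion. Then by Lemma \ref{s6l2} the image of the integral map $\widetilde{\rho}^{*}_{\mathbb{Z}}$ has the property that tensoring it with $\mathbb{Q}$ and with $\mathbb{F}_{p}$ recovers $\mathrm{im}\,\rho^{*}_{\mathbb{Q}}$ and $\mathrm{im}\,\rho^{*}_{\mathbb{F}_{p}}$ respectively. By Lemma \ref{s6l1} these images are precisely the invariant rings $H^{*}(BT;\,\mathbb{Q})^{W(A)}$ and $H^{*}(BT;\,\mathbb{F}_{p})^{W(A)}$. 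So the $p$-torsion-freeness hypothesis forces a numerical compatibility between the Poincar\'e series of these two invariant rings, namely that $H^{*}(BT;\,\mathbb{F}_{p})^{W(A)}$ is the mod $p$ reduction of a finitely generated free $\mathbb{Z}$-module whose rationalization is $H^{*}(BT;\,\mathbb{Q})^{W(A)}$; in particular their Poincar\'e series must agree. The strategy is to exhibit, for each prime $p$, a rank 3 infinite Kac-Moody group (or, more uniformly, to show that \emph{every} rank 3 infinite one, or at least a representative of each of the four classes of Proposition \ref{s3ht}) violates this compatibility.

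Concretely, I would proceed as follows. First, reduce to computing the two invariant rings. For the rational side, Theorem \ref{s5t1} already gives the answer: $H^{*}(BT;\,\mathbb{Q})^{W(A)}$ is either $\mathbb{Q}$ (nonsymmetrizable case) or $\mathbb{Q}[\psi]$ with $\deg\psi = 4$ (symmetrizable case), so its Poincar\'e series is either $1$ or $1/(1-t^{4})$. For the mod $p$ side, the key observation — already used in the mod 3 computation in Section \ref{s5} — is that the representation $W(A)\to GL(3,\mathbb{F}_{p})$ induced by the $W(A)$-action on $H^{*}(BT;\,\mathbb{F}_{p})\cong\mathbb{F}_{p}[\omega_1,\omega_2,\omega_3]$ frequently has image all of $GL(3,\mathbb{F}_{p})$ (or at least a subgroup large enough that the invariant ring is a polynomial ring on Dickson-type generators whose degrees grow with $p$). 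When that happens, $H^{*}(BT;\,\mathbb{F}_{p})^{W(A)}$ contains the Dickson invariants of $GL(3,\mathbb{F}_{p})$, in degrees $2(p^{3}-p^{2})$, $2(p^{3}-p)$, $2(p^{3}-1)$, so its Poincar\'e series is visibly not $1$ and not $1/(1-t^{4})$. This contradiction with the rational answer, via Lemma \ref{s6l2}, is exactly what produces the $p$-torsion. I would state and prove the needed surjectivity (or near-surjectivity) of $W(A)\to GL(3,\mathbb{F}_{p})$ as a lemma, by choosing the Cartan matrix entries of $A$ cleverly modulo $p$: the generators $\sigma_j$ act as transvections $I_3 - A_j$, and one shows a suitable family of transvections generates $GL(3,\mathbb{F}_{p})$ (or $SL(3,\mathbb{F}_{p})$, which already suffices since its Dickson invariants also sit in high even degrees). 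One must only check that such an $A$ can be chosen to be of infinite type, which is automatic for generic large negative off-diagonal entries.

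The main obstacle will be the mod $2$ case and, more generally, making the surjectivity argument uniform across all primes while staying inside the class of \emph{infinite} rank 3 Cartan matrices. For $p=2$ the transvections $I_3 - A_j$ are quite constrained (the off-diagonal entries of $A$ reduce to $0$ or $1$ mod $2$), so one has fewer elementary matrices to play with, and $GL(3,\mathbb{F}_2)=SL(3,\mathbb{F}_2)$ has order only $168$; one needs to verify by hand that the available transvections still generate it, and that the Dickson invariants of $GL(3,\mathbb{F}_2)$ (degrees $8,12,14$) genuinely force a Poincar\'e-series mismatch with the rational invariants — here one should be careful because Theorem \ref{s4t1} shows the mod 2 cohomology can carry extra summands $I_j$ coming from $G_2$, but those are irrelevant since we only compare $\mathrm{im}\,\rho^{*}$, i.e. the $P_{123}$ summand. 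A secondary subtlety: Lemma \ref{s6l2} compares $\mathrm{im}\,\widetilde{\rho}^{*}_{\mathbb{Z}}$ reduced mod $p$ with $\mathrm{im}\,\rho^{*}_{\mathbb{F}_p}$, and one should make sure the argument really yields a contradiction at the level of graded dimensions in a single fixed degree (e.g.\ exhibit a degree in which $H^{*}(BT;\,\mathbb{F}_p)^{W(A)}$ is nonzero but $H^{*}(BT;\,\mathbb{Q})^{W(A)}$ vanishes, or vice versa), rather than a mere asymptotic discrepancy; picking the bottom Dickson generator degree and comparing with the rational Poincar\'e series does this cleanly.
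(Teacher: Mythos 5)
Your overall skeleton (argue by contradiction from $p$-torsion-freeness, use Lemma \ref{s6l2} and Lemma \ref{s6l1} to reduce to comparing $H^{*}(BT;\,\mathbb{Q})^{W(A)}$ with $H^{*}(BT;\,\mathbb{F}_{p})^{W(A)}$, use Theorem \ref{s5t1} on the rational side and Dickson invariants on the mod $p$ side) is indeed the paper's argument, but two of your steps go wrong. First, the theorem is a statement about \emph{every} rank 3 infinite Kac-Moody group, so you are not free to ``choose the Cartan matrix entries of $A$ cleverly modulo $p$''; that only yields an existential statement. Worse, the surjectivity lemma you propose ($W(A)\twoheadrightarrow GL(3,\,\mathbb{F}_{p})$, or near-surjectivity) is false for many infinite-type $A$: if all off-diagonal entries of $A$ are divisible by $p$, each $\sigma_{j}$ acts mod $p$ as a diagonal sign matrix, so the image has order at most $8$, while $A$ can still be of infinite type (take all off-diagonal entries equal to $-p$). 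It is also unnecessary: the paper's key point is the opposite containment. Since the $W(A)$-action on $H^{*}(BT;\,\mathbb{F}_{p})$ factors through $GL(3,\,\mathbb{F}_{p})$, one always has $H^{*}(BT;\,\mathbb{F}_{p})^{GL(3,\,\mathbb{F}_{p})}\subseteq H^{*}(BT;\,\mathbb{F}_{p})^{W(A)}$ no matter what the image is; a smaller image only makes the mod $p$ invariant ring larger, which helps the contradiction. This single observation treats all $A$ uniformly and makes $p=2$ no harder than odd primes (the extra summands $I_{j}$ are irrelevant, as you correctly note).

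Second, your closing numerical step is not strong enough. The torsion-freeness hypothesis gives only the one-sided inequality ${\rm rank}\,H^{n}(BT;\,\mathbb{Q})^{W(A)}\geq {\rm rank}\,H^{n}(BT;\,\mathbb{F}_{p})^{W(A)}$ in each degree $n$, so your ``or vice versa'' direction contradicts nothing; you must produce a degree where the mod $p$ invariants are strictly larger. Your suggestion of taking the bottom Dickson generator degree works only when $A$ is nonsymmetrizable, where the rational invariants are just $\mathbb{Q}$. When $A$ is symmetrizable the rational invariants are $\mathbb{Q}[\psi]$ with $\deg\psi=4$, and for odd $p$ every degree in which the Dickson algebra is nonzero is divisible by $4$, so ``nonzero versus zero'' can never occur; one needs a degree in which the Dickson algebra has dimension at least $2$, for instance $2p^{2}(p^{2}-1)=p\cdot 2(p^{3}-p)=(p+1)\cdot 2(p^{3}-p^{2})$, which is exactly how the paper concludes. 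With the surjectivity lemma replaced by the containment above and this degree count supplied, your argument becomes the paper's proof.
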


\begin{proof}
If there is a prime $p$ such that $H^{*}(BK(A);\,\mathbb{Z})$ has no a $p$-torsion , then $H^{*}(BK(A);\,\mathbb{Z})$ consists of a free part and a torsion part that has no $p$-torsion .

It is easy to see that $\widetilde{\rho}^*_{\mathbb{Z}}$ always maps the generator of $H^{*}(BK(A);\,\mathbb{Z})^{free}$ to some
$\mathbb{Z}$-linear combination of generators of $H^{*}(BT;\,\mathbb{Z})$. Certainly, for $\mathrm{im}\,\widetilde{\rho}^*_{\mathbb{Z}}$, we may choose a basis from the image of the generators of $H^{*}(BK(A);\,\mathbb{Z})^{free}$. This basis is also a basis of $\mathrm{im}\,\widetilde{\rho}^*_{\mathbb{Z}}\otimes\mathbb{Q}$ since $\mathbb{Z}$-linear independence is equivalent to $\mathbb{Q}$-linear independence. But when we tensor it with
$\mathbb{F}_{p}$, this may not be a basis of $\mathrm{im}\,\widetilde{\rho}^*_{\mathbb{Z}}\otimes\mathbb{F}_{p}$, thus
$${\rm rank}(~\mathrm{im}\,\widetilde{\rho}^*_{\mathbb{Z}}\otimes\mathbb{Q}~)\geq {\rm rank}(~\mathrm{im}\,\widetilde{\rho}^*_{\mathbb{Z}}\otimes\mathbb{F}_{p}~),$$
which, together with Lemma \ref{s6l2}, implies that
\begin{equation}\label{s6f1}
{\rm rank}(~\mathrm{im}\,\rho^{*}_{\mathbb{Q}}~) \geq {\rm rank}(~\mathrm{im}\,\rho^{*}_{\mathbb{F}_{p}}~).
\end{equation}
In addition, by Lemma \ref{s6l1}, we know that
$$\mathrm{im}\,\rho^{*}_{\mathbb{Q}}\cong H^{*}(BT;\,\mathbb{Q})^{W(A)},~~~\mathrm{im}\,\rho^{*}_{\mathbb{F}_{p}}\cong H^{*}(BT;\,\mathbb{F}_{p})^{W(A)}.$$
Then, the inequality \ref{s6f1} can be rewritten as
\begin{equation}\label{s6f2}
{\rm rank}(~H^{*}(BT;\,\mathbb{Q})^{W(A)}~) \geq {\rm rank}(~H^{*}(BT;\,\mathbb{F}_{p})^{W(A)}~).
\end{equation}
Note that $$H^{*}(BT;\,\mathbb{F}_{p})^{GL(3,\,\mathbb{F}_{p})} \subseteq H^{*}(BT;\,\mathbb{F}_{p})^{W(A)}$$
and
$$H^{*}(BT;\,\mathbb{F}_{p})^{GL(3,\,\mathbb{F}_{p})}\cong\mathbb{F}_{p}[d_{2(p^{3}-p^{2})},~d_{2(p^{3}-p)},~d_{2(p^{3}-1)}],$$
where $GL(3,\,\mathbb{F}_{p})$ is the general linear group of degree 3 over $\mathbb{F}_{p}$ and $d_{2(p^{3}-p^{2})},~d_{2(p^{3}-p)}$, $d_{2(p^{3}-1)}$
are Dickson invariants \cite{Dic11}. Thus, there exists an inequality of Poincar\'e series
\begin{equation}\label{s6f3}
H(~H^{*}(BT;\,\mathbb{F}_{p})^{W(A)};~t~)\geq H(~H^{*}(BT;\,\mathbb{F}_{p})^{GL(3,\,\mathbb{F}_{p})};~t~)=\frac{1}{(1-t^{2(p^{3}-p^{2})})(1-t^{2(p^{3}-p)})(1-t^{2(p^{3}-1)})}.
\end{equation}
In addition, from Theorem \ref{s5t1}, if $A$ is nonsymmetrizable, we deduce that
$$H^{*}(BT;\,\mathbb{Q})^{W(A)}\cong \mathbb{Q}~~{\rm and ~~hence}~~H(~H^{*}(BT;\,\mathbb{Q})^{W(A)};~t~)=1.$$ Therefore, $$H(~H^{*}(BT;\,\mathbb{F}_{p})^{W(A)};~t~)\geq
\frac{1}{(1-t^{2(p^{3}-p^{2})})(1-t^{2(p^{3}-p)})(1-t^{2(p^{3}-1)})} > H(~H^{*}(BT;\,\mathbb{Q})^{W(A)};~t~),$$
which contradicts \ref{s6f2}. On the other hand, if $A$ is symmetrizable, we deduce that $$H^{*}(BT;\,\mathbb{Q})^{W(A)}\cong \mathbb{Q}[\psi]~~{\rm
and~~hence}~~H(~H^{*}(BT;\,\mathbb{Q})^{W(A)};~t~)=\frac{1}{1-t^{4}}.$$
Now, consider the coefficient of $t^{2p^{2}(p^{2}-1)}$ in power series $$H(~H^{*}(BT;\,\mathbb{F}_{p})^{GL(3,\,\mathbb{F}_{p})};~t~)~~{\rm and}~~H(~H^{*}(BT;\,\mathbb{Q})^{W(A)};~t~).$$
Note that for the first power series, the coefficient is at least two, and for the second the coefficient, it is one; we conclude that $${\rm rank} (~H^{2p^{2}(p^{2}-1)}(BT;\,\mathbb{F}_{p})^{GL(3,\,\mathbb{F}_{p})}~)> {\rm rank}(~H^{2p^{2}(p^{2}-1)}(BT;\,\mathbb{Q})^{W(A)}~),$$
which, combined with \ref{s6f3}, implies that
$${\rm rank} (~H^{2p^{2}(p^{2}-1)}(BT;\,\mathbb{F}_{p})^{W(A)}~)> {\rm rank}(~H^{2p^{2}(p^{2}-1)}(BT;\,\mathbb{Q})^{W(A)}~),$$
and this also contradicts \ref{s6f2}. This completes the proof.
\end{proof}

\section{The ring structure of \texorpdfstring{$H^*(BK(A);\,\mathbb{Q})$}{} \label{s8}}

In this section, we try to determine the ring structure of $H^*(BK(A);\,\mathbb{Q})$.

\subsection{Vanishing results for certain direct summand of \texorpdfstring{$H^*(BK(A);\,\mathbb{Q})$}{}}

We have already shown that $H^*(BK(A);\,\mathbb{Q})$ can be represented by a direct sum of the invariants of Weyl groups and their quotients. Then it is natural to ask whether those results could be reduced further according to the given classification. It turns out that some results can be refined, which simplifies the computation of the cup product of $H^*(BK(A);\,\mathbb{Q})$. In particular, for the rational cohomology, the first case in Theorem \ref{s3t1} can be further reduced by the following theorem, but it can not for the mod $p$ cohomology.
\begin{Thm}\label{s8t1}
Let $A=(a_{ij})_{3\times 3}$ be an infinite Cartan matrix and $p$ be any prime. If $A_{12}$ is also infinite, then $P=P_{1}+P_{2}$ holds for rational invariants but not for mod $p$ invariants. Namely,
\begin{align*}
\mathbb{Q}[\omega_{1},~\omega_{2},~\omega_{3}]&= \mathbb{Q}[\omega_{1},~\omega_{2},~\omega_{3}]^{\langle\sigma_{1}\rangle}+ \mathbb{Q}[\omega_{1},~\omega_{2},~\omega_{3}]^{\langle\sigma_{2}\rangle}\\
&= \mathbb{Q}[\omega_{1}(\omega_{1}-\alpha_1),~\omega_{2},~\omega_{3}]+\mathbb{Q}[\omega_{1},~\omega_{2}(\omega_{2}-\alpha_2),~\omega_{3} ],\end{align*}
$${\mathbb{F}_p}[\omega_{1},~\omega_{2},~\omega_{3}]\neq {\mathbb{F}_p}[\omega_{1}(\omega_{1}-\alpha_1),~\omega_{2},~\omega_{3}]+{\mathbb{F}_p}[\omega_{1},~ \omega_{2}(\omega_{2}-\alpha_2),~\omega_{3}].$$
\end{Thm}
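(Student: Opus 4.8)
Both halves of the statement are governed by Poincar\'e series via the grading-preserving exact sequence $0\to P_1\cap P_2\to P_1\oplus P_2\to P_1+P_2\to 0$. Since $P_1+P_2\subseteq P$ and every ring here is graded with finite–dimensional homogeneous pieces, the equality $P_1+P_2=P$ is equivalent to the numerical identity $H(P_1;t)+H(P_2;t)-H(P_1\cap P_2;t)=H(P;t)$; the same applies verbatim with $\mathbb F_p$–coefficients. Also $P_1\cap P_2=P^{\langle\sigma_1\rangle}\cap P^{\langle\sigma_2\rangle}=P^{W_{12}(A)}$, because a polynomial fixed by $\sigma_1$ and by $\sigma_2$ is fixed by $W_{12}(A)=\langle\sigma_1,\sigma_2\rangle$ (and, as $A_{12}$ is infinite, $m_{12}=\infty$, so this group is infinite dihedral). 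I would first record that $H(P_1;t)=H(P_2;t)=\frac{1}{(1-t^2)^2(1-t^4)}$ over \emph{any} coefficient field: the element $\theta_1:=\omega_1(\omega_1-\alpha_1)=-\omega_1^2-(a_{21}\omega_2+a_{31}\omega_3)\omega_1$ is, up to the unit $-1$, monic of degree $2$ in $\omega_1$ over $k[\omega_2,\omega_3]$, so $k[\omega_1,\omega_2,\omega_3]$ is a free $k[\theta_1,\omega_2,\omega_3]=P_1$–module on $\{1,\omega_1\}$, whence $H(P_1;t)=H(P;t)/(1+t^2)$. Substituting, the whole theorem collapses to one assertion about the invariant ring $P^{W_{12}(A)}$: its Poincar\'e series equals $\frac{1}{(1-t^2)(1-t^4)}$ over $\mathbb Q$ but not over $\mathbb F_p$.

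For the rational half I would show $P^{W_{12}(A)}\otimes\mathbb Q=\mathbb Q[\omega_3,\psi_{12}]$ with $\deg\psi_{12}=4$. The group $W_{12}(A)$ fixes $\omega_3$, and, $A_{12}$ being infinite, $\sigma_1\sigma_2$ is either hyperbolic (indefinite case) or a non-trivial unipotent (affine case); hence inside each plane $\{\omega_3=\mathrm{const}\}$ of $\mathbb A^3$ a generic $W_{12}(A)$–orbit closure is one–dimensional, so $P^{W_{12}(A)}\otimes\mathbb Q$ has Krull dimension $2$ and, by Zariski's affirmative answer to Hilbert's fourteenth problem in transcendence degree $\le2$, is finitely generated. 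One then exhibits the degree–$4$ generator $\psi_{12}$ coming from the invariant (Killing) form of the symmetrizable matrix $A_{12}$ and checks $P^{W_{12}(A)}\otimes\mathbb Q=\mathbb Q[\omega_3,\psi_{12}]$; this last step is the known description of the rational invariants of a rank $2$ infinite Weyl group in the spirit of Theorem~\ref{s5t1}, the fixed coordinate $\omega_3$ supplying one further polynomial generator. Then $H(P^{W_{12}(A)}\otimes\mathbb Q;t)=\frac{1}{(1-t^2)(1-t^4)}$, so $P=P_1+P_2$ over $\mathbb Q$; the displayed polynomial identity follows since rationally $P_i=P^{\langle\sigma_i\rangle}=\mathbb Q[\omega_i(\omega_i-\alpha_i),\dots]$ by completing the square.

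For the mod $p$ half the point is that modulo $p$ the \emph{infinite} group $W_{12}(A)$ acts on $\mathbb F_p^3$ through a \emph{finite} quotient $\overline W\le GL(3,\mathbb F_p)$ — every element of $GL(3,\mathbb F_p)$ has finite order, in particular the parabolic/hyperbolic $\sigma_1\sigma_2$ does — so $P^{\overline W}\otimes\mathbb F_p$ is a Noether invariant ring of a finite group on a $3$–variable polynomial ring and thus has Krull dimension $3$. For $p$ odd, $\sigma_i$ is still a genuine pseudo–reflection (as $2\ne0$), hence $\mathbb F_p[\theta_i,\dots]=P^{\langle\sigma_i\rangle}\otimes\mathbb F_p$ and the intersection of the two concrete subrings in the theorem is exactly $P^{\overline W}\otimes\mathbb F_p$. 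For $p=2$ the identification $\mathbb F_2[\theta_i,\dots]=P^{\langle\sigma_i\rangle}\otimes\mathbb F_2$ may fail when $\sigma_i$ degenerates to the identity mod $2$; here I would check case by case (according to the residues of $a_{21},a_{31},a_{12},a_{32}$ mod $2$) that the intersection still contains three algebraically independent homogeneous elements — $\omega_3$ together with one of $\omega_1^{2},\theta_1^{2}$ and one of $\omega_2^{2},\theta_2^{2}$ (or the relevant $\theta_i$ itself when it is a polynomial in the fixed variables) — and is module–finite over the polynomial ring they generate, hence again of Krull dimension $3$. In every case, a finitely generated graded algebra over a field whose degree–$0$ part is the field has Poincar\'e series with a pole at $t=1$ of order equal to its Krull dimension; order $3\ne2$, so this series is $\ne\frac{1}{(1-t^2)(1-t^4)}$. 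By the reduction above this forces $H(\mathbb F_p[\theta_1,\omega_2,\omega_3]+\mathbb F_p[\omega_1,\theta_2,\omega_3];t)\ne H(P;t)$, i.e.\ the strict inclusion asserted in the theorem.

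The main obstacle is the precise rational computation $P^{W_{12}(A)}\otimes\mathbb Q=\mathbb Q[\omega_3,\psi_{12}]$ (equivalently, the exact Poincar\'e series $\frac{1}{(1-t^2)(1-t^4)}$): this is where the special feature of rank $2$ infinite Weyl groups — that rationally only the Killing form survives among the invariants — is genuinely used, and it has to be established uniformly across the affine and indefinite cases. The mod $p$ half, by contrast, is soft: it rests only on the triviality that a linear representation over a finite field has finite image, which inflates the Krull dimension of the invariants from $2$ to $3$; the sole bookkeeping there is the small–characteristic degeneration when $p=2$.
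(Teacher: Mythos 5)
Your reduction of both halves to Poincar\'e series via $H(P_1+P_2)=H(P_1)+H(P_2)-H(P_1\cap P_2)$, the computation $H(P_i;t)=\frac{1}{(1-t^2)^2(1-t^4)}$ over any field, and the identification $P_1\cap P_2=P^{W_{12}(A)}$ is exactly the paper's skeleton; your mod~$p$ half (finite image of $W_{12}(A)$ in $GL(3,\mathbb{F}_p)$, hence invariants of Krull dimension $3$, hence a pole of order $3\neq 2$ at $t=1$) is a sound rephrasing of the paper's Dickson-invariant growth argument, complete for odd $p$ and only sketched, though completable, at $p=2$. The genuine gap is in the rational half: everything there hinges on $P^{W_{12}(A)}\otimes\mathbb{Q}=\mathbb{Q}[\omega_3,\psi_{12}]$, equivalently on the Poincar\'e series $\frac{1}{(1-t^2)(1-t^4)}$, which you yourself label ``the main obstacle'' and never establish. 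The route you indicate cannot close it: knowing the invariant ring is finitely generated of Krull dimension $2$ and contains $\mathbb{Q}[\omega_3,\psi_{12}]$ does not force equality (a proper module-finite extension of the same dimension would have a larger Poincar\'e series, and excluding exactly that is the whole point), and Theorem \ref{s5t1} does not apply as stated: it concerns a Weyl group acting on its own rank's weight variables (and only indefinite type), whereas here the rank-2 parabolic $W_{12}(A)$ acts on three variables. In particular, ``the fixed coordinate $\omega_3$ supplying one further polynomial generator'' presumes the three-dimensional representation splits as (reflection representation of $W(A_{12})$)\,$\oplus$\,(trivial). The span of $\alpha_1,\alpha_2$ is an invariant subrepresentation with trivial quotient, but when $A_{12}$ is affine ($a_{12}a_{21}=4$) the fixed vector $\omega_3$ can lie in that span (e.g.\ $a_{12}=a_{21}=-2$, $a_{31}=a_{32}=-1$ gives $\alpha_1+\alpha_2=-2\omega_3$), so the extension need not split and the ``tensor with $\mathbb{Q}[\omega_3]$'' shortcut is not available without further argument.

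The paper closes precisely this gap with Lemma \ref{s8l1}: it exhibits the explicit degree-4 invariant $\kappa$, reduces an arbitrary invariant modulo the fixed element $\omega_3$ (the induced action on $\mathbb{Q}[\omega_1,\omega_2]$ is the genuine rank-2 reflection action, so the rank-2 statement that the invariants are $\mathbb{Q}[\psi]$, valid for affine as well as indefinite $A_{12}$ since every rank-2 Cartan matrix is symmetrizable, applies), then subtracts $\lambda_0\kappa^n$, divides by $\omega_3$, and iterates; this works uniformly, including the non-split affine configuration above. Until you supply this lemma or an argument of comparable substance (for instance, proving the splitting when it holds and treating the degenerate affine case separately), your rational conclusion $P=P_1+P_2$ is assumed in the guise of the unproved invariant-ring computation rather than derived. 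Separately, at $p=2$ you should actually carry out the promised case check (e.g.\ when $a_{21},a_{31}$ are both even one can take $\omega_3,\ \theta_1\equiv\omega_1^2,\ \theta_2^2$), since when $\bar\alpha_i=0$ the subring $\mathbb{F}_2[\theta_i,\ldots]$ is no longer the full $\sigma_i$-invariant ring and the finite-group argument does not apply verbatim.
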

\begin{Rem}
Let $\mathbb{F}$ be any field. The action of Weyl group $W(A)$ on $ H^{*}(BT;\,\mathbb{F})$ induces a representation in $GL(3,\,\mathbb{F})$. Recall the condition that $A_{12}$ is infinite, which implies that the Weyl subgroup $W_{12}(A)$ is infinite. The essential difference between the rational case and mod $p$ case is that the image of $W_{12}(A)$ is still infinite in $GL(3,\,\mathbb{Q})$ but finite in $GL(3,\,\mathbb{F}_{p})$.
\end{Rem}
To prove Theorem \ref{s8t1}, we need the following lemma.
\begin{Lem}\label{s8l1}
Let $\kappa$ denote
$a_{12}\omega_{1}^{2}+a_{12}a_{21}\omega_{1}\omega_{2}+a_{21}\omega_{2}^{2}+a_{12}a_{31}\omega_{1}\omega_{3}+a_{21}a_{32}\omega_{2}\omega_{3}$, then
$$P_{12}= \mathbb{Q}[\kappa,~\omega_{3}].$$
\end{Lem}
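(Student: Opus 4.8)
The plan is to strip off the reflection $\sigma_1$ at no cost and do the real work for the infinite cyclic group generated by $\tau=\sigma_1\sigma_2$. First I would dispose of the easy inclusion: from the formulas $\sigma_1(\omega_1)=-\omega_1-a_{21}\omega_2-a_{31}\omega_3$ and $\sigma_2(\omega_2)=-a_{12}\omega_1-\omega_2-a_{32}\omega_3$ (the remaining generators acting as identities), a direct substitution gives $\sigma_1(\kappa)=\kappa$, and then $\sigma_2(\kappa)=\kappa$ is automatic because $\kappa$ is invariant under the involution exchanging $\omega_1\leftrightarrow\omega_2$, $a_{12}\leftrightarrow a_{21}$, $a_{31}\leftrightarrow a_{32}$. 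Since $\omega_3$ is fixed by both generators, $\mathbb{Q}[\kappa,\omega_3]\subseteq P_{12}$, and because the $\omega_1^2$-coefficient $a_{12}$ of $\kappa$ is nonzero, $\kappa$ and $\omega_3$ are algebraically independent, so this is a polynomial subring. The content of the lemma is the reverse inclusion.

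Next, since $A_{12}$ is infinite, $W_{12}(A)$ is the infinite dihedral group, $\tau=\sigma_1\sigma_2$ has infinite order, and $W_{12}(A)=\langle\sigma_1\rangle\ltimes\langle\tau\rangle$; hence $P_{12}=\bigl(P^{\langle\tau\rangle}\bigr)^{\langle\sigma_1\rangle}$, and as $\sigma_1$ fixes both $\kappa$ and $\omega_3$ it is enough to prove $P^{\langle\tau\rangle}=\mathbb{Q}[\kappa,\omega_3]$. Computing the matrix of $\tau$ on $V=\langle\omega_1,\omega_2,\omega_3\rangle$, one sees $\tau(\omega_3)=\omega_3$ and that $\tau$ acts on the transverse plane with determinant $1$ and trace $a_{12}a_{21}-2$, so its other two eigenvalues are $\lambda,\lambda^{-1}$ with $\lambda+\lambda^{-1}=a_{12}a_{21}-2\ge 2$.

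I would then split into two cases. If $a_{12}a_{21}>4$, then $a_{12}a_{21}-2\ge 3$ and $\lambda=\frac{1}{2}\bigl((a_{12}a_{21}-2)+\sqrt{(a_{12}a_{21}-2)^2-4}\,\bigr)$ is a real quadratic irrational, in particular not a root of unity; over $\overline{\mathbb{Q}}$ the element $\tau$ is then diagonalizable with eigenvalues $\lambda,\lambda^{-1},1$, and in eigencoordinates $x,y,z$ the $\langle\tau\rangle$-invariant monomials are exactly the $x^ay^az^c$, so $\overline{\mathbb{Q}}[V]^{\langle\tau\rangle}=\overline{\mathbb{Q}}[xy,z]$ is polynomial on generators of degrees $4$ and $2$. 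Since $P^{\langle\tau\rangle}=\ker(\tau-\mathrm{id})$ and kernels commute with the flat base change $\mathbb{Q}\to\overline{\mathbb{Q}}$, the Poincar\'e series of $P^{\langle\tau\rangle}$ is $1/\bigl((1-t^2)(1-t^4)\bigr)$, equal to that of $\mathbb{Q}[\kappa,\omega_3]$; a graded inclusion of graded vector spaces with equal Poincar\'e series is an equality. If $a_{12}a_{21}=4$ ($A_{12}$ affine), then $\tau$ is unipotent; writing $\tau=\exp N$ with $N$ a rational nilpotent matrix and using that $n\mapsto\tau^n\!\cdot\! f$ is a polynomial in $n$ constant on $\mathbb{Z}$, one gets $P^{\langle\tau\rangle}=\ker D_N$, the kernel of the locally nilpotent derivation of $\mathbb{Q}[\omega_1,\omega_2,\omega_3]$ induced by $N$. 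Indecomposability of $A$ forces $a_{31},a_{32}$ not both zero, and a short computation (noting $a_{12}a_{31}\ge 0>2a_{32}$ when $a_{32}\ne0$, and the analogous inequality when $a_{32}=0$) gives $\ker N=\ker(\tau-\mathrm{id})=\mathbb{Q}\omega_3$; thus $N$ is a single Jordan block and the classical description of the kernel of the basic locally nilpotent derivation in three variables gives $\ker D_N=\mathbb{Q}[\omega_3,h]$ with $\deg h=4$. Since $\kappa\in\ker D_N$ has degree $4$ and is not proportional to $\omega_3^2$, we get $\mathbb{Q}[\omega_3,h]=\mathbb{Q}[\omega_3,\kappa]$. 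In both cases $P^{\langle\tau\rangle}=\mathbb{Q}[\kappa,\omega_3]$, and taking $\langle\sigma_1\rangle$-invariants finishes the proof.

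The main obstacle is the case analysis for $P^{\langle\tau\rangle}$: one must be certain the transverse eigenvalue is never a root of unity, which is exactly where the arithmetic $a_{12}a_{21}\in\mathbb{Z}_{\ge4}$ enters and pins the only borderline situation to the unipotent case; and in the affine case one must correctly recognise the unipotent action as a single Jordan block (this is where indecomposability of $A$ is used) and invoke the classical, but not formal, fact that the kernel of the basic locally nilpotent derivation on a three-variable polynomial ring is again a polynomial ring. The base-change and Poincar\'e-series bookkeeping in the hyperbolic case is routine once arranged, but it is what lets us avoid diagonalising $\tau$ over $\mathbb{Q}$, which is impossible since $\lambda\notin\mathbb{Q}$.
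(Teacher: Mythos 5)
Your argument is correct, but it takes a genuinely different route from the paper's. The paper proves the hard inclusion $P_{12}\subseteq\mathbb{Q}[\kappa,\omega_{3}]$ by an elementary induction on the $\omega_{3}$-adic expansion of an invariant $f$: setting $\omega_{3}=0$ lands in the rank-two invariant ring $\mathbb{Q}[\omega_{1},\omega_{2}]^{W(A_{12})}$, which (by the theorem of Zhao--Jin quoted as Theorem \ref{s5t1}, together with the fact that every rank-two Cartan matrix is symmetrizable) equals $\mathbb{Q}[\psi]$ with $\psi$ the rank-two Killing form; one then subtracts the matching power $\lambda_{0}\kappa^{n}$, divides by the invariant $\omega_{3}$, and iterates. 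You instead use the semidirect decomposition $W_{12}(A)=\langle\sigma_{1}\rangle\ltimes\langle\tau\rangle$ of the infinite dihedral group and compute $P^{\langle\tau\rangle}$ directly, splitting into the indefinite case ($\tau$ diagonalizable over $\overline{\mathbb{Q}}$ with eigenvalue $\lambda>1$ not a root of unity, then a Poincar\'e-series comparison with the subring $\mathbb{Q}[\kappa,\omega_{3}]$) and the affine case ($\tau$ unipotent, $P^{\langle\tau\rangle}$ identified with the kernel of the associated locally nilpotent derivation, a single Jordan block by your sign argument, and then the classical fact that the kernel of the basic derivation in three variables is a polynomial ring on generators of degrees $2$ and $4$). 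I checked the key points: the fixed space of $\tau$ on linear forms in the affine case is $2$-dimensional exactly when $2a_{32}=a_{12}a_{31}$ (with your composition convention), and the sign constraints plus indecomposability rule this out, so the single-Jordan-block claim holds; the Poincar\'e-series and base-change bookkeeping in the other case is sound. What the two approaches buy: the paper's proof is shorter, uniform in the two rank-two subcases, and self-contained modulo the citation of \cite{ZJ14}; yours avoids that citation entirely (in effect re-deriving the rank-two statement, including the affine case which strictly speaking lies outside the indefinite-type hypothesis of Theorem \ref{s5t1}), and gives the extra structural information $P^{\langle\tau\rangle}=P_{12}$, but at the cost of a case analysis, explicit use of the standing hypotheses that $A_{12}$ is infinite and $A$ indecomposable (which you rightly make explicit, since the single-Jordan-block step genuinely needs $a_{31},a_{32}$ not both zero), and an appeal to the classical but nontrivial Weitzenb\"ock-type description of the kernel of the basic derivation.
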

\begin{Rem}
Note that $\kappa$ has a primitive form denoted by $\kappa'$. By directly checking, we know that $\kappa'$ and $\omega_{3}$ belong to $P_{12}$ for the mod $p$ coefficient. From Theorem \ref{s8t1}, it follows that $ \mathbb{F}_p[\kappa',~\omega_{3}] \subsetneq P_{12}$.
\end{Rem}
\begin{proof}[Proof of Lemma \ref{s8l1}] It is easy to check that $\omega_{3},~\kappa \in P_{12}$, so $\mathbb{Q}[\kappa,~\omega_{3}]\subseteq P_{12}$. Then it suffices to prove that $$P_{12} \subseteq \mathbb{Q}[\kappa,~\omega_{3}],$$ which is equivalent to proving that
any homogeneous polynomial $f\in P_{12} \subset \mathbb{Q}[\omega_{1},~\omega_{2},~\omega_{3}]$ can be expressed by some polynomial of $\omega_{3}$ and $\kappa$. Then, for any homogeneous polynomial $f\in P_{12}$, $f$ can be written uniquely as
$$\sum\limits_{i=0}^{\deg f} h_{i}(\omega_{1},~\omega_{2})\omega_{3}^{i},$$
where $h_{i}(\omega_{1},~\omega_{2})$ is a homogeneous polynomial of $\omega_{1}$ and $\omega_{2}$ for $0\leq i\leq \deg f$. It is easy to see that
$$f(\omega_{1},~\omega_{2},~0)\in \mathbb{Q}[\omega_{1},~\omega_{2}]^{\langle\sigma^{'}_{1},~\sigma^{'}_{2}\rangle},$$
where $\sigma^{'}_{1}$ and $\sigma^{'}_{2}$ are the generators of $W(A_{12})$. Moreover, from Theorem \ref{s6t1} and the fact that every rank two Cartan matrix is symmetrizable, we deduce that $f(\omega_{1},~\omega_{2},~0)= h_{0}(\omega_{1},~\omega_{2})$ is a homogeneous polynomial of the Killing form $$\psi=a_{12}\omega_{1}^{2}+a_{12}a_{21}\omega_{1}\omega_{2}+a_{21}\omega_{2}^{2}.$$
Thus,
\[h_{0}(\omega_{1},~\omega_{2})=\left\{\begin{array}{ll}
0&~~\text{if $\deg f$ is odd},\\
\lambda_{0}\psi^{\frac{\deg f}{2}}\,&
~~\text{if $\deg f$ is even},
\end{array}\right.\]
where $\lambda_{0} \in \mathbb{Q} $ is a constant.

Next, suppose $\deg f = 2n~(n \in \mathbb{N})$,~$f$ can be expressed as
$$f(\omega_{1},~\omega_{2},~\omega_{3})=\lambda_{0}\psi^{n}+\sum\limits_{i=1}^{2n-1} h_{i}(\omega_{1},
~\omega_{2})\omega_{3}^{i}+\lambda_{2n}\omega_{3}^{2n},$$
and we can rewrite this equation as
\begin{equation}\label{s8f1}
f(\omega_{1},~\omega_{2},~\omega_{3})-\lambda_{0}\kappa^{n}=\lambda_{0}(\psi^{n}-\kappa^{n})+\sum\limits_{i=1}^{2n-1} h_{i}(\omega_{1}, ~\omega_{2})\omega_{3}^{i}+\lambda_{2n}\omega_{3}^{2n}=\omega_{3}(\sum\limits_{i=0}^{2n-1} h^{'}_{i}(\omega_{1},~\omega_{2})\omega_{3}^{i}).
\end{equation}
Furthermore, by the fact that $$f(\omega_{1},~\omega_{2},~\omega_{3})-\lambda_{0}\kappa^{n}, \omega_{3} \in P_{12},$$
we have $$\frac{f(\omega_{1},~\omega_{2},~\omega_{3})-\lambda_{0}\kappa^{n}}{\omega_{3}}\in P_{12}.$$
With the same discussion as above, we obtain $h^{'}_{0}(\omega_{1},~\omega_{2})=0$.
Repeating this process until the degree of the right side of \ref{s8f1} decreases to zero, then we have $$f(\omega_{1},~\omega_{2}, ~\omega_{3})=\sum\limits_{i=0}^{n} g_{2i}(\kappa)\omega_{3}^{2i}.$$

On the other hand, for $\deg f$ odd, the argument is similar. Thus, we conclude that $$P_{12}\subseteq \mathbb{Q}[\kappa,\omega_{3}].$$ This finishes the proof of Lemma \ref{s8l1}.
\end{proof}

Now, we prove Theorem \ref{s8t1}.
\begin{proof}[Proof of Theorem \ref{s8t1}] According to the two parts of the result in the theorem, we also divide this proof into two parts: the rational case and mod $p$ case. We first prove the rational case. Let $H(Q)$ denote the Poincar\'e series of $Q$ with indeterminate $t$. From Lemma \ref{s8l1}, it follows that $$H(~P_{12}~)=\frac{1}{(1-t^{2})(1-t^{4})}.$$
Note that
\begin{align*}
H(~P_{1}+ P_{2}~)&= H(~P_{1}~)+ H(~P_{2}~)- H(~P_{12}~)\\
&= \frac{1}{(1-t^{2})^{2}(1-t^{4})}+ \frac{1}{(1-t^{2})^{2}(1-t^{4})}- \frac{1}{(1-t^{2})(1-t^{4})}\\
&= \frac{1+t^{2}}{(1-t^{2})^{2}(1-t^{4})}= \frac{1}{(1-t^{2})^{3}}=H(~P~),
\end{align*}
Therefore, $P=P_{1}+P_{2}$ holds for the rational coefficient, which finishes the first part of this proof.

To complete this proof, we need to prove the second part: the mod $p$ case.
If $P= P_{1}+P_{2}$ holds for the mod $p$ coefficient, we have
\begin{align*}
H(~P_{12}~)&=H(~P_{1}~)+H(~P_{2}~)-H(~P_{1}+P_{2}~)\\
&=H(~P_{1}~)+H(~P_{2}~)-H(~P~)\\
&=\frac{1}{(1-t^{2})^{2}(1-t^{4})}+ \frac{1}{(1-t^{2})^{2}(1-t^{4})}- \frac{1}{(1-t^{2})^3}\\
&=\frac{1}{(1-t^{2})(1-t^{4})}.
\end{align*}
Note that $$H^{*}(BT;\,\mathbb{F}_{p})^{GL(3,\,\mathbb{F}_{p})} \subseteq H^{*}(BT;\,\mathbb{F}_{p})^{W_{12}(A)}$$
and
$$H^{*}(BT;\,\mathbb{F}_{p})^{GL(3,\,\mathbb{F}_{p})}\cong\mathbb{F}_{p}[d_{2(p^{3}-p^{2})},~d_{2(p^{3}-p)},~d_{2(p^{3}-1)}].$$
Thus, there is an inequality of Poincar\'e series
$$H(~H^{*}(BT;\,\mathbb{F}_{p})^{W_{12}(A)};~t~)\geq H(~H^{*}(BT;\,\mathbb{F}_{p})^{GL(3,\,\mathbb{F}_{p})};~t~),$$
namely,$$\frac{1}{(1-t^{2})(1-t^{4})}\geq\frac{1}{(1-t^{2(p^{3}-p^{2})})(1-t^{2(p^{3}-p)})(1-t^{2(p^{3}-1)})}.$$
However, the coefficient of the right side of the series may be greater than that on the left side, which is a contradiction. This finishes the proof of the second part and hence of Theorem \ref{s8t1}.
\end{proof}

By Theorem \ref{s8t1}, we can simplify the results of Theorem \ref{s3t1}.
\begin{Thm}\label{s8t2}
The same notations apply as in Theorem \ref{s3t1}. For rank 3 infinite Kac-Moody groups, the rational cohomology of the first two classes ${\rm (i), (ii)}$ could be reduced to the third class ${\rm (iii)}$.
\end{Thm}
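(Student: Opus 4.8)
The plan is to rewrite the graded $\mathbb{Q}$-vector spaces appearing in cases (i) and (ii) of Theorem \ref{s3t1} into the shape of case (iii), using only Theorem \ref{s8t1} together with elementary identities among the invariant rings $P_J=P^{W_J(A)}$. Recall that $P_J\subseteq P_H$ whenever $H\subseteq J$ and that $P_{J_1\cup J_2}=P_{J_1}\cap P_{J_2}$ (see the proof of Lemma \ref{s3l3}); in particular $P_{12}\subseteq P_1$ and $P_{13}=P_1\cap P_3$. For class (i) one has $P(A)=\{\{1\},\{2\},\{3\}\}$, so $A_{12}$ is infinite; Theorem \ref{s8t1} then gives $P=P_1+P_2$, hence $\Sigma(P-(P_1+P_2))=0$, and the formula of case (i) collapses to $\Sigma(P-(P_{12}+P_3))\oplus P_{123}$, which is exactly the formula of case (ii). It therefore remains to identify the formula of case (ii) with that of case (iii), and for this I only need that $A_{13}$ is infinite, which holds both in class (i) and in class (ii): in the latter, $P(A)=\{\{1,2\},\{3\}\}$, so $\{1,3\}$ is not of finite type.

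The core of the argument is the graded-vector-space isomorphism $P-(P_{12}+P_3)\cong P_1-(P_{12}+P_{13})$, valid whenever $A_{13}$ is infinite. I would prove it by analysing the composite $\bar\iota\colon P_1\hookrightarrow P\twoheadrightarrow P/(P_{12}+P_3)$. Since $A_{13}$ is infinite, Theorem \ref{s8t1} applied with the indices $2$ and $3$ interchanged yields $P=P_1+P_3$; as $P_3\subseteq P_{12}+P_3$, this makes $\bar\iota$ surjective. Its kernel is $P_1\cap(P_{12}+P_3)$, and the key point is that this equals $P_{12}+P_{13}$. The inclusion $\supseteq$ is immediate from $P_{12}\subseteq P_1$ and $P_{13}=P_1\cap P_3$. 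Conversely, if $y\in P_1$ and $y=u+v$ with $u\in P_{12}\subseteq P_1$ and $v\in P_3$, then $v=y-u\in P_1\cap P_3=P_{13}$, so $y\in P_{12}+P_{13}$. Hence $\bar\iota$ descends to an isomorphism $P_1/(P_{12}+P_{13})\xrightarrow{\sim}P/(P_{12}+P_3)$ of graded vector spaces, i.e., $P-(P_{12}+P_3)\cong P_1-(P_{12}+P_{13})$. Applying the degree shift $\Sigma$ and adjoining the common summand $P_{123}$ turns the formula of case (ii) into that of case (iii); together with the collapse described above, the same holds for case (i).

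I do not anticipate a genuine obstacle: each step is either a direct appeal to Theorem \ref{s8t1} (up to an innocuous permutation of the index set $\{1,2,3\}$) or elementary linear algebra inside the polynomial ring $P=H^*(BT;\mathbb{Q})$. The only place that needs some care is the kernel computation $P_1\cap(P_{12}+P_3)=P_{12}+P_{13}$ — in particular being explicit about which of the containments $P_{12}\subseteq P_1$, $P_{13}\subseteq P_1$, $P_{13}\subseteq P_3$ is used at each point — and checking that the degree shift $\Sigma$ is respected throughout, so that the final identification is one of graded vector spaces. It should also be noted, as the surrounding text stresses, that this reduction is purely additive and fails for mod $p$ coefficients, precisely because the identity $P=P_1+P_2$ of Theorem \ref{s8t1} fails there.
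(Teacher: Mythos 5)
Your proposal is correct and follows essentially the same route as the paper's proof: the collapse $\Sigma(P-(P_1+P_2))=0$ in case (i) via Theorem \ref{s8t1}, and the identification $P-(P_{12}+P_3)\cong P_1-(P_{12}+P_{13})$ obtained from $P=P_1+P_3$ (Theorem \ref{s8t1} with indices permuted) together with $P_1\cap(P_{12}+P_3)=P_{12}+P_{13}$, which the paper derives by the modular law using $P_{12}\subset P_1$ and you verify by the equivalent element-wise argument.
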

\begin{proof}
First, recall the results of Theorem \ref{s3t1},
\begin{center}
\begin{tabular}{|l|c|}
\hline Homotopy type   &  Cohomology group \\
\hline [{\rm (i)}] $\{\{1\},~\{2\},~\{3\}\}$&$\Sigma(P-(P_{1}+P_{2}))\oplus \Sigma(P-(P_{12}+P_{3}))\oplus P_{123}$ \\
\hline [{\rm (ii)}] $\{\{1, 2\},~\{3\}\}$&$\Sigma(P-(P_{12}+P_{3}))\oplus P_{123}$\\
\hline [{\rm (iii)}]$\{\{1, 2\},~\{1, 3\}\}$&$\Sigma(P_{1}-(P_{12}+P_{13}))\oplus P_{123}$\\
\hline [{\rm (iv)}] $\{\{1, 2\},~\{1, 3\},~\{2, 3\}\}$ & $\Sigma^{2}(P-(P_{1}+P_{2}+P_{3}))\oplus\Sigma(P_{1}\cap(P_{2}+P_{3})-(P_{12}+P_{13}))\oplus P_{123}$.\\
\hline
\end{tabular}
\end{center}
Therefore, via Theorem \ref{s8t1}, we deduce that $\Sigma(P-(P_{1}+P_{2}))$ is zero for the rational cohomology for case (i). Consequently, the rational cohomologies of (i) and (ii) are the same.
A comparison between (ii) and (iii) showed that $\{1, 3\}$ in (iii) becomes infinite in (ii). By Theorem \ref{s8t1}, we have $P=P_{1}+P_{3}$, which implies that $$P-(P_{12}+P_{3})=(P_{1}+(P_{12}+P_{3}))-(P_{12}+P_{3})\cong P_{1}-P_{1}\cap(P_{12}+P_{3}).$$
Since $P_{12}\subset P_{1}$, we deduce that $$P_{1}\cap (P_{12}+P_{3})=P_{1}\cap P_{12}+P_{1}\cap P_{3}=P_{12}+P_{13}.$$
Thus, $$P-(P_{12}+P_{3})\cong P_{1}-(P_{12}+P_{13}).$$
Therefore, the cohomology of (ii) can be reduced to (iii) by Theorem \ref{s8t1}, so does the cohomology of (i).
\end{proof}

\bigskip

Foley once guessed that $\Sigma^{2}(P-(P_{1}+P_{2}+P_{3}))$ is also zero in \cite{Fol22}; if this were true, our results could be reduced further, but we cannot prove it. His guess can be stated generally as follows:
\begin{Conj}\label{Fc}
If $A$ is an infinite $n\times n$ Cartan matrix, then$$ P=P_{1}+P_{2}+\cdot\cdot\cdot+P_{n},$$
where $P=\mathbb{Q}[\omega_{1},~\omega_{2},~\cdots,~\omega_{n}]$ and $P_{i}$ denotes the invariant $H^{*}(BT;\,\mathbb{Q})^{\langle\sigma_{i}\rangle}$ for $\sigma_{i} \in W(A),~1\leq i \leq n$.
\end{Conj}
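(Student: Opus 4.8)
The plan is to recast the conjecture as the non-existence of a $W(A)$-\emph{anti-invariant functional} in each graded piece, and then to destroy any such functional by a root-hyperplane divisibility argument. This is the structural analogue of the proof of Theorem \ref{s8t1}, but it replaces the case-by-case Hilbert-series computation of invariant rings by a single geometric argument; the price is that it is forced to confront a "bigness" statement about the $W(A)$-action on the weight space.

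\textbf{Reduction by duality.} Since $W(A)$ preserves the grading, it suffices to prove $P_d=(P_1)_d+\cdots+(P_n)_d$ degree by degree, where $(P_i)_d=P_d^{\,\sigma_i}$ is the fixed subspace of the involution $\sigma_i$ on the finite-dimensional space $P_d=H^{2d}(BT;\mathbb{Q})$. For finitely many subspaces of a finite-dimensional space, $\sum_i (P_i)_d=P_d$ if and only if $\bigcap_i (P_i)_d^{\perp}=0$ in the dual $P_d^{\vee}$. Because $\mathrm{char}\,\mathbb{Q}=0$, the involution $\sigma_i$ splits $P_d=P_d^{\sigma_i}\oplus P_d^{-\sigma_i}$, and the annihilator of $P_d^{\sigma_i}$ is exactly the $(-1)$-eigenspace of the transposed involution $\sigma_i^{\vee}$ on $P_d^{\vee}$. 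Hence the conjecture is equivalent to the assertion that for every $d$ there is no nonzero $\phi\in P_d^{\vee}$ with $\sigma_i^{\vee}\phi=-\phi$ for all $i\in I$.

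\textbf{Divisibility.} Identify $\bigoplus_d P_d^{\vee}$ with the polynomial ring $\mathbb{Q}[V]$ of functions on $V:=H^{2}(BT;\mathbb{Q})=\mathrm{span}(\omega_1,\dots,\omega_n)$; the pairing $\mathrm{Sym}^d(V^{\vee})\times\mathrm{Sym}^d V\to\mathbb{Q}$ is perfect in characteristic $0$, so such a $\phi$ is honestly a polynomial on $V$. The sign character $\epsilon\colon W(A)\to\{\pm1\}$ is a well-defined homomorphism, so the conditions $\sigma_i^{\vee}\phi=-\phi$ force $w\cdot\phi=\epsilon(w)\phi$ for every $w\in W(A)$, and in particular $\sigma_{\gamma}\cdot\phi=-\phi$ for every reflection $\sigma_{\gamma}=w\sigma_iw^{-1}$. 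Each $\sigma_{\gamma}$ acts on $V$ as a reflection with a codimension-one fixed hyperplane $H_{\gamma}\subset V$; evaluating $\phi(\sigma_{\gamma}v)=-\phi(v)$ at $v\in H_{\gamma}$ gives $\phi|_{H_{\gamma}}=0$, so the linear form $\ell_{\gamma}\in V^{\vee}$ cutting out $H_{\gamma}$ divides $\phi$ in the UFD $\mathbb{Q}[V]$. If the mirrors of distinct reflections of $W(A)$ are pairwise distinct in $V$, then since an infinite-type $W(A)$ has infinitely many reflections, $\phi$ would be divisible by infinitely many pairwise non-proportional linear forms, forcing $\phi=0$. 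This would give $P=P_1+\cdots+P_n$, and it simultaneously explains the failure over $\mathbb{F}_p$: there the image of $W(A)$ in $GL(n,\mathbb{F}_p)$ is finite, only finitely many mirrors occur, and $\phi$ can legitimately be a product of those finitely many linear forms in high degree — consistent with Theorem \ref{s8t1}.

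\textbf{Where the real work is.} Everything hinges on the geometric input that the $n$-dimensional action of $W(A)$ on $V$ detects infinitely many reflections with pairwise distinct mirrors. For symmetrizable $A$ I expect this to follow from the standard theory of Kac-Moody root systems together with Theorem \ref{s5t1}: the Killing form gives a $W(A)$-equivariant identification $V\cong V^{\vee}$, the set of real roots is infinite, and distinct real roots are non-proportional, hence give distinct mirrors. The genuinely delicate points are the non-symmetrizable indefinite case — where one has no invariant form to lean on — and the fact that the realization used here keeps $\dim V=n$ even when $\mathrm{rank}\,A<n$ (so it is not Kac's minimal realization and one must rule out collapse of reflection hyperplanes by hand). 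A promising concrete route, class by class in Proposition \ref{s3ht}, is to exhibit an element of infinite order in the image of $W(A)$ in $GL(n,\mathbb{Q})$ — a nontrivial unipotent product $\sigma_i\sigma_j$ when $a_{ij}a_{ji}=4$, or an element with an eigenvalue off the unit circle when $a_{ij}a_{ji}>4$ — and to check that its iterates carry one fixed mirror through infinitely many distinct hyperplanes; this is the step I expect to be the main obstacle.
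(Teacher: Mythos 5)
You should first be clear about the status of the statement: it is Conjecture \ref{Fc} in the paper (Foley's guess), and the authors explicitly say they cannot prove it; the paper only establishes the rank-two analogue $P=P_1+P_2$ (Theorem \ref{s8t1}) by a Hilbert-series computation of $P_{12}$. So there is no paper proof to compare with, and your proposal has to stand on its own. The reduction you make is correct: degree by degree, $\sum_i(P_i)_d=P_d$ holds iff there is no nonzero $\phi\in\mathrm{Sym}^d(V^{\vee})$ with $\sigma_i^{\vee}\phi=-\phi$ for all $i$ (the annihilator computation for an involution in characteristic $0$ is right), such a $\phi$ is anti-invariant under all of $W(A)$, hence vanishes on the fixed hyperplane of every reflection $w\sigma_iw^{-1}$ acting on $V$, and is therefore divisible in $\mathbb{Q}[V]$ by the corresponding linear forms; a nonzero homogeneous $\phi$ of degree $d$ cannot absorb more than $d$ pairwise non-proportional linear factors.

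The genuine gap is exactly the one you flag yourself: you never prove that this $n$-dimensional representation carries infinitely many distinct mirrors, and the route you propose to get it (class-by-class search for infinite-order elements, separate worries about non-symmetrizable $A$ and about $\det A=0$) is both incomplete and heavier than necessary. The clean way to close it is to observe that the contragredient action of $W(A)$ on $V^{\vee}$, written in the basis $\{\alpha_i^{\vee}\}$ dual to $\{\omega_i\}$, is $\sigma_i(\alpha_j^{\vee})=\alpha_j^{\vee}-a_{ji}\alpha_i^{\vee}$, i.e.\ precisely the standard $W$-action on the rational span of the simple coroots; the linear form cutting out the mirror of $w\sigma_iw^{-1}$ is the real coroot $w\alpha_i^{\vee}$, which lives intrinsically in the coroot lattice, independently of any choice of realization and of symmetrizability, so neither the absence of an invariant form nor $\operatorname{rank} A<n$ causes any ``collapse'' by itself. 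What remains to be invoked (or proved) are the standard facts that for infinite type $W(A)$ is infinite, that an infinite Coxeter group has infinitely many reflections, that reflections correspond to real (co)roots up to sign, and that proportional real (co)roots are only $\pm$ of each other (\cite{Kac90}); together these give infinitely many pairwise non-proportional real coroots, hence infinitely many mirrors, with no case analysis at all. Until that step is written out carefully, your text is a promising reduction rather than a proof — and since completing it would settle a statement the authors and Foley leave open, that step deserves a full, careful argument rather than the tentative sketch you give.
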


\subsection{The rational cohomology ring \texorpdfstring{$H^*(BK(A);\,\mathbb{Q})$}{}}

\begin{Thm}\label{RCR}
The same notations apply as in Theorem \ref{s3t1}. For the classes ${\rm (i), (ii), (iii)}$, the rational cohomology ring $H^*(BK(A);\,\mathbb{Q})$ is isomorphic to
\begin{enumerate}
\item $\bigoplus\limits_{i=2}^\infty \bigoplus\limits_{j=1}^{\alpha_{i}} x_{2i+1}^j$ with $H(H^*(BK(A);\,\mathbb{Q}))=1 + \alpha_2 t^5+\alpha_3 t^7 +\cdots +\alpha_{i} t^{2i+1}+\cdots $ when $A$ is nonsymmetrizable;
\item $\mathbb{Q}[\psi]\otimes\bigoplus\limits_{i=2}^\infty \bigoplus\limits_{j=1}^{\beta_{i}} y_{2i+1}^j$ with $(1-t^4)H(H^*(BK(A);\,\mathbb{Q}))=1 + \beta_2 t^5+\beta_3 t^7 +\cdots +\beta_{i} t^{2i+1}+\cdots $when $A$ is symmetrizable.
\end{enumerate}
\end{Thm}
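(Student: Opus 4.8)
The plan is to identify $H^{*}(BK(A);\mathbb{Q})$, for each of the classes (i), (ii), (iii), as a trivial square-zero extension of the subring $P_{123}=H^{*}(BT;\mathbb{Q})^{W(A)}$ by a free $P_{123}$-module concentrated in odd degrees, and then to read off the two cases from Theorem \ref{s5t1}. First I would package what is already proved into a short exact sequence of graded \emph{rings}. By Theorems \ref{s3t1} and \ref{s8t2} — whose proof runs the iterated Mayer--Vietoris sequence of Lemma \ref{s3l1} and uses Theorem \ref{s8t1} to collapse (i) and (ii) to (iii) — in all three cases the last gluing yields $H^{*}(BK(A);\mathbb{Q})\cong\Sigma\,\mathrm{coker}\,j^{*-1}\oplus\ker j^{*}$ with $\ker j^{*}\cong P_{123}$. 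Writing $C:=\mathrm{im}\,\delta$ for the cokernel summand and combining Lemmas \ref{s3l1}, \ref{s3l3} with Lemma \ref{s6l1}, this is the exact sequence of graded rings
$$0\longrightarrow C\longrightarrow H^{*}(BK(A);\mathbb{Q})\stackrel{\rho^{*}}{\longrightarrow}P_{123}\longrightarrow 0,$$
the quotient map being $\rho^{*}$ corestricted onto its image $P_{123}$ (here one uses that $\rho$ factors through a finite parabolic $X_{J}$, that $\rho_{J}^{*}$ is injective, and that $\ker j^{*}$ is the diagonal copy of $P_{123}$); in particular $\ker\rho^{*}=C$.

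Next I would record three facts about $C$. Since $\delta$ is a morphism of $H^{*}(BK(A);\mathbb{Q})$-modules, $C$ is an ideal; being the image under $\delta$ of an evenly graded ring of invariants, it lies in odd degrees; and a direct inspection of the degree $0$ and $2$ parts of $P_{1},P_{12},P_{13}$ (resp. of $P,P_{12},P_{3}$) shows $\mathrm{coker}\,j^{*}$ vanishes in cohomological degrees $0$ and $2$, so $C$ is concentrated in odd degrees $\geq 5$. Hence $C\cdot C\subseteq C$ has trivial even part, i.e. $C\cdot C=0$. This already settles the nonsymmetrizable case: by Theorem \ref{s5t1}, $P_{123}=\mathbb{Q}$, the unit map $\mathbb{Q}\hookrightarrow H^{*}(BK(A);\mathbb{Q})$ splits $\rho^{*}$ as a ring homomorphism, so $H^{*}(BK(A);\mathbb{Q})=\mathbb{Q}\ltimes C$ is the trivial square-zero extension, and a homogeneous basis of $C$ (with $\alpha_{i}:=\dim_{\mathbb{Q}}C_{2i+1}$) gives the asserted description and Poincar\'e series.

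In the symmetrizable case $P_{123}=\mathbb{Q}[\psi]$ with $\deg\psi=4$. Since $C_{4}=0$, the map $\rho^{*}$ sends $H^{4}(BK(A);\mathbb{Q})$ isomorphically onto $\mathbb{Q}\psi$, so $\psi$ lifts uniquely to $\tilde\psi\in H^{4}(BK(A);\mathbb{Q})$ and $\psi\mapsto\tilde\psi$ defines a ring section $s\colon\mathbb{Q}[\psi]\to H^{*}(BK(A);\mathbb{Q})$ of $\rho^{*}$. Thus $H^{*}(BK(A);\mathbb{Q})\cong\mathbb{Q}[\psi]\ltimes C$, a square-zero extension in which $C$ is a graded $\mathbb{Q}[\psi]$-module, and the theorem becomes exactly the statement that $C$ is a \emph{free} $\mathbb{Q}[\psi]$-module: a homogeneous $\mathbb{Q}[\psi]$-basis of $C$, with $\beta_{i}$ generators in degree $2i+1$, then exhibits $H^{*}(BK(A);\mathbb{Q})\cong\mathbb{Q}[\psi]\otimes\big(\mathbb{Q}\oplus\bigoplus_{i,j}\mathbb{Q}\,y^{j}_{2i+1}\big)$ with all positive-degree products of the $y$'s vanishing, and $(1-t^{4})H=1+\sum_{i}\beta_{i}t^{2i+1}$.

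The hard part is the freeness of $C$ over $\mathbb{Q}[\psi]$. Under the isomorphism $\Sigma$, multiplication by $\tilde\psi$ on $C$ corresponds (up to sign) to multiplication by $\psi$ on $\mathrm{coker}\,j^{*}=P_{1}/(P_{12}+P_{13})$ (resp. $P/(P_{12}+P_{3})$); as this is a nonnegatively graded $\mathbb{Q}[\psi]$-module finite-dimensional in each degree, freeness is equivalent to torsion-freeness, i.e. to injectivity of multiplication by $\psi$. So everything reduces to the purely algebraic claim: if $f\in P_{1}$ and $\psi f=g+h$ with $g\in P_{12}$, $h\in P_{13}$, then $f\in P_{12}+P_{13}$ (and its analogue with $P_{1}$ replaced by $H^{*}(BT;\mathbb{Q})$ and $P_{13}$ by $P_{3}$). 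The difficulty is that $P_{12}+P_{13}$ is only a subspace, not an ideal. My plan for this is a descent along $\psi$: $H^{*}(BT;\mathbb{Q})$ is free over $\mathbb{Q}[\psi]$ (a graded domain over a graded PID), $\psi$ is divisible by no linear form $\alpha_{k}$ (a rank computation on the quadratic form $\psi$), and $\sigma_{k}u-u\in(\alpha_{k})$ for every $u$; applying $\sigma_{2}$ and $\sigma_{3}$ to $\psi f=g+h$ and cancelling $\alpha_{k}$ forces $\sigma_{3}g\equiv g$ and $\sigma_{2}h\equiv h$ modulo $\alpha_{3}\psi$ and $\alpha_{2}\psi$, and averaging $\psi f=g+h$ over the finite group $W_{12}$ and over $\langle\sigma_{3}\rangle$ produces, after a legitimate division by $\psi$, elements $\mathrm{Av}_{W_{12}}(f)\in P_{12}$ and $\mathrm{Av}_{\sigma_{3}}(f)\in P_{3}$; one then checks that the error $f-\mathrm{Av}_{W_{12}}(f)-\mathrm{Av}_{\sigma_{3}}(f)$ again satisfies the same hypothesis and concludes by induction along the $\alpha_{3}$-adic filtration coming from $H^{*}(BT;\mathbb{Q})=P_{3}\oplus u_{3}P_{3}$, with $u_{3}$ the $\sigma_{3}$-anti-invariant linear form. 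Making this filtered descent terminate is the step I expect to demand the most care.
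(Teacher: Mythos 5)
Your reduction of the ring structure to a square--zero extension $0\to C\to H^{*}(BK(A);\,\mathbb{Q})\to P_{123}\to 0$ is, for everything the paper actually proves, the same argument as the paper's (the proof of Theorem \ref{r4t3}, to which Theorem \ref{RCR} points): there the odd summand $\Sigma(P_{1}-(P_{12}+P_{13}))$ is identified with $\ker i_{1}^{*}$, hence is an ideal, is observed to be a $P_{123}$-module, and its products are killed using injectivity of $i_{1}^{*}$ on $P_{123}$; your variant (the ideal is concentrated in odd degrees $\geq 5$, so its square, which is even, must vanish) is a harmless substitute, and your treatment of the nonsymmetrizable case and of the lift of $\psi$ via Theorem \ref{s5t1} matches what the paper does or leaves implicit, after the reduction of (i), (ii) to (iii) through Theorems \ref{s8t1} and \ref{s8t2}.

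Where you go beyond the paper is in insisting that the literal statement $\mathbb{Q}[\psi]\otimes(\cdots)$, together with the nonnegativity of the $\beta_{i}$ defined by $(1-t^{4})H$, is exactly the assertion that $C$ is a \emph{free} $\mathbb{Q}[\psi]$-module, i.e.\ that multiplication by $\psi$ is injective on $P_{1}/(P_{12}+P_{13})$. You are right that this is what the symmetrizable case requires, and the paper's proof does not address it: it stops after the square-zero and $P_{123}$-module statements. However, your proposed proof of this torsion-freeness is only a sketch and, as written, does not close. In case (iii) the relevant target is $P_{12}+P_{13}$, and averaging over $\langle\sigma_{3}\rangle$ only produces an element of $P_{3}$, not of $P_{13}$ (the identification of $P/(P_{12}+P_{3})$ with $P_{1}/(P_{12}+P_{13})$ used in Theorem \ref{s8t2} is not available here, since $A_{13}$ is of finite type in class (iii)); the $W_{12}$-average of the equation $\psi f=g+h$ leaves the term $\mathrm{Av}_{W_{12}}(h)$ uncontrolled; and the induction along the $\alpha_{3}$-adic filtration comes with no decreasing invariant, so there is no argument that the descent terminates --- a point you flag yourself. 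So the freeness step remains a genuine gap in your proposal as a self-contained proof, although it concerns a claim that the paper's own proof likewise leaves unargued, relying only on the module structure and the Poincar\'e-series definition of the $\beta_{i}$.
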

\begin{proof}
See the proof of Theorem \ref{r4t3}.
\end{proof}

\begin{Thm}\label{r4t3}
 If Conjecture \ref{Fc} is true for rank 3 case, the rational cohomology ring $H^*(BK(A);\,\mathbb{Q})$ of the class ${\rm (iv)}$ can be computed as these three classes.
\end{Thm}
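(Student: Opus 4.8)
The plan is to first extract the general ring-theoretic mechanism behind Theorem~\ref{RCR} and then to observe that Conjecture~\ref{Fc}, specialised to $n=3$, brings the answer for class~{\rm (iv)} inside its scope; in particular one argument proves both Theorems~\ref{RCR} and~\ref{r4t3}. Write $H^{*}=H^{*}(BK(A);\,\mathbb{Q})$. By Lemma~\ref{s6l1} the homomorphism $\rho^{*}_{\mathbb{Q}}$ is a ring map with $\mathrm{im}\,\rho^{*}_{\mathbb{Q}}=P_{123}$, so $H^{*}/K\cong P_{123}$ for the ideal $K=\ker\rho^{*}_{\mathbb{Q}}$; since $\dim_{\mathbb{Q}}K^{n}=\dim_{\mathbb{Q}}H^{n}-\dim_{\mathbb{Q}}P_{123}^{n}$, the graded dimensions of $K$ are precisely those of the non-$P_{123}$ summands displayed in Theorem~\ref{s3t1}.

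I would treat classes~{\rm (i)}--{\rm (iii)} first (this is Theorem~\ref{RCR}). In each of these the non-$P_{123}$ summands of Theorem~\ref{s3t1} are suspensions $\Sigma(\,\cdot\,)$ of subquotients of $H^{*}(BT;\,\mathbb{Q})$; as the latter is concentrated in even degrees, $K$ is concentrated in \emph{odd} degrees. Since $H^{*}$ is a graded-commutative $\mathbb{Q}$-algebra, the product of any two elements of $K$ lies in $K$ and has even total degree, hence vanishes: $K$ is a square-zero ideal. By Theorem~\ref{s5t1}, $P_{123}\cong\mathbb{Q}$ when $A$ is nonsymmetrizable and $P_{123}\cong\mathbb{Q}[\psi]$ with $\deg\psi=4$ when $A$ is symmetrizable. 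In the first case the surjection $H^{*}\to P_{123}$ trivially has a ring section. In the second, $\rho^{*}_{\mathbb{Q}}$ maps $H^{4}$ onto $P_{123}^{4}$, so $\psi$ lifts to a preimage $\widetilde{\psi}\in H^{4}$, and this preimage is unique because $K^{4}=0$; as $\rho^{*}_{\mathbb{Q}}(\widetilde{\psi}^{\,k})=\psi^{k}\neq 0$ for every $k$, the map $\mathbb{Q}[\widetilde{\psi}]\hookrightarrow H^{*}$ is injective with image meeting $K$ only in degree~$0$, giving the ring section. In either case $H^{*}\cong P_{123}\oplus K$ as a ring, with $P_{123}$ the stated polynomial ring (or $\mathbb{Q}$) and $K$ a square-zero ideal in odd degrees; a further check that $K$ is free as a graded $\mathbb{Q}[\psi]$-module (it is bounded below and torsion-free over the graded PID $\mathbb{Q}[\psi]$, so graded Nakayama makes it free) turns this into the tensor-product presentation of Theorem~\ref{RCR}, and reading off the graded dimensions of $K$ yields the exponents $\alpha_{i}$ (resp.\ $\beta_{i}$) and the Poincar\'e-series identities.

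For class~{\rm (iv)}, granting Conjecture~\ref{Fc} for $n=3$ we have $P=P_{1}+P_{2}+P_{3}$, so the summand $\Sigma^{2}(P-(P_{1}+P_{2}+P_{3}))$ in Theorem~\ref{s3t1}{\rm (iv)} vanishes and
$$H^{*}(BK(A);\,\mathbb{Q})\cong\Sigma\bigl(P_{1}\cap(P_{2}+P_{3})-(P_{12}+P_{13})\bigr)\oplus P_{123}.$$
Because $P_{12}\subseteq P_{1}\cap P_{2}$ and $P_{13}\subseteq P_{1}\cap P_{3}$, one has $P_{12}+P_{13}\subseteq P_{1}\cap(P_{2}+P_{3})$, so the displayed non-$P_{123}$ summand is again a single suspension of an even-degree space --- formally the situation of class~{\rm (iii)} with $P_{1}$ replaced by $P_{1}\cap(P_{2}+P_{3})$. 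Hence the argument of the previous paragraph applies verbatim, and $H^{*}(BK(A);\,\mathbb{Q})$ has exactly the ring structure described in Theorem~\ref{RCR}{\rm (1)}--{\rm (2)}, i.e.\ it is ``computed as these three classes''.

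The decisive point --- and the reason Conjecture~\ref{Fc} is assumed --- is that in class~{\rm (iv)}, without the conjecture, $\Sigma^{2}(P-(P_{1}+P_{2}+P_{3}))$ is a genuine \emph{even}-degree summand; then $K$ is no longer concentrated in odd degrees, a product of two odd ($\Sigma^{1}$-)classes need not vanish but can land in this even summand, $K$ fails to be square-zero, and the trivial-extension description collapses --- one would have to compute the bilinear pairing with values in $\Sigma^{2}(P-(P_{1}+P_{2}+P_{3}))$, which is precisely what we cannot control and which the conjecture renders moot. Two more modest points must be spelled out: that $\mathrm{im}\,\rho^{*}_{\mathbb{Q}}$ is exactly the $P_{123}$-summand of Theorem~\ref{s3t1} (equivalently, that each connecting-map image is annihilated by $\rho^{*}_{\mathbb{Q}}$, since it restricts trivially to the finite-parabolic pieces and hence to $BT$), which rests on exactness and naturality of the Mayer--Vietoris data of Lemma~\ref{s3l3}; and the graded freeness of $K$ over $\mathbb{Q}[\psi]$ in the symmetrizable case, which is routine but not entirely formal.
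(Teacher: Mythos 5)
Your proposal is correct and follows essentially the paper's own route: the non-$P_{123}$ part is treated as an odd-degree ideal (you realize it as $\ker\rho^{*}_{\mathbb{Q}}$ via Lemma \ref{s6l1}, the paper as $\ker i_1^{*}$ in the push-out diagram), products of its elements are even and are detected injectively on the $P_{123}$ part, hence vanish, and Conjecture \ref{Fc} is invoked exactly to kill the even summand $\Sigma^{2}(P-(P_{1}+P_{2}+P_{3}))$ so that class {\rm (iv)} acquires the same shape as class {\rm (iii)}. The one residual point you flag --- freeness of the odd ideal as a $\mathbb{Q}[\psi]$-module in the symmetrizable case --- is likewise left implicit in the paper's proof, so your write-up is at least as complete as the published argument.
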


\begin{proof}
By Theorem \ref{s8t2}, it suffices to compute the rational cup product of $H^*(BK(A);\,\mathbb{Q})$ for case ${\rm (iii)}$. The push-out diagram of ${\rm (iii)}:  BK(A)\simeq BK_{12}(A)\cup BK_{13}(A)$
$$\CD
BT@>j_1>>BK_{12}(A) \\
  @V j_2 VV @V i_1 VV  \\
  BK_{3}(A) @>i_2>> BK_{12}(A)\cup BK_{3}(A)
\endCD$$
induces the following commutative diagram:
\begin{center}
$\xymatrix{
  H^{*}(BK_{12}(A)\cup BK_{3}(A);\mathbb{Q})\ar[d]_{i_2^*} \ar[r]^{i_1^*\hspace{-0.8cm}} &H^{*}(BK_{3}(A);\mathbb{Q}) \ar[d]^{j_1^*} \\
   H^{*}(BK_{12}(A);\mathbb{Q})\ar[r]^{j_2^*\hspace{-0.8cm}} & H^{*}(BT;\,\mathbb{Q}),}
\xymatrix{
  \Sigma(P_{1}-(P_{12}+P_{13}))\oplus P_{123}\ar[d]_{i_2^*} \ar[r]^{i_1^*\hspace{-1cm}} &P_3 \ar[d]^{j_1^*}\\
   P_{12}\ar[r]^{j_2^*\hspace{-1cm}} & P.}$
\end{center}
By the facts that $i_1^*$ preserves degrees and it is injective when it restricts to $P_{123}$, we obtain that $\ker i_1^*=\Sigma(P_{1}-(P_{12}+P_{13}))$, which means that $\Sigma(P_{1}-(P_{12}+P_{13}))$ is an ideal of $H^*(BK(A);\,\mathbb{Q})$. Obviously, $P_{1},P_{12}$, and $P_{13}$ are $P_{123}$ modules, so is $\Sigma(P_{1}-(P_{12}+P_{13}))$.
Recall Theorem \ref{s5t1},
$$P_{123}\cong \left\{\begin{array}{ll}
\mathbb{Q}\hspace{1cm}  \text{if $A$ is nonsymmetrizable},\\
\mathbb{Q}[\psi]\hspace{0.5cm} \text{if $A$ is symmetrizable}.
\end{array}\right.
$$
If $A$ is nonsymmetrizable, then the cup product on $\Sigma(P_{1}-(P_{12}+P_{13}))$ is trivial. If $A$ is symmetrizable, then the cup product on $\Sigma(P_{1}-(P_{12}+P_{13}))$ is also trivial. The reason is stated as follows.
If there exist two elements $x, y$ in $\Sigma(P_{1}-(P_{12}+P_{13}))$ such that the cup product $x\cup y$ is nontrivial, then $x\cup y$ is in $P_{123}$ for degree reason. Since $i_1^*$ is injective on $P_{123}$, $i_1^*(x\cup y)=i_1^*(x)\cup i_1^*(y)$ is nonzero, which contradicts to the fact that $i_1^*(x)=i_1^*(y)=0$. So the cup product on $\Sigma(P_{1}-(P_{12}+P_{13}))$ must be trivial.
\end{proof}

\bigskip

\newpage
\noindent {\sc Yangyang Ruan

\noindent
Beijing Institute of Mathematical Sciences and Applications\\ No. 544, Hefangkou Village, Huaibei Town, Huairou District, Beijing, 101408 \\P.R.China

\noindent{{\it E-mails:}}
\texttt{ruanyy@amss.ac.cn}

\vspace{0.3cm}
\noindent {\sc Xu-an Zhao

\noindent
Beijing Normal University\\ No.19, Xinjiekouwai St, Haidian District, Beijing, 100875\\P.R.China

\noindent{{\it E-mails:}}
\texttt{zhaoxa@bnu.edu.cn}

\end{document}